\theoremstyle{plain}
\newtheorem{theorem}{Theorem}[section]
\newtheorem{proposition}[theorem]{Proposition}
\newtheorem{lemma}[theorem]{Lemma}
\newtheorem{condition}[theorem]{Condition}
\newtheorem{remark}[theorem]{Remark}
\numberwithin{equation}{section}
\numberwithin{theorem}{section}
\newcommand{\mc}[1]{{\mathcal #1}}
\newcommand{\bb}[1]{{\mathbb #1}}
\newcommand{\id}{{1 \mskip -5mu {\rm I}}}
\renewcommand{\epsilon}{\varepsilon}
\renewcommand{\tilde}{\widetilde}
\renewcommand{\hat}{\widehat}
\newcommand{\supp}{\mathop{\rm supp}\nolimits}
\renewcommand{\div}{\mathop{\rm div}\nolimits}
\newcommand{\Ent}{\mathop{\rm Ent}\nolimits}
\newcommand{\sce}{\mathop{\rm sc^-\!}\nolimits}
\definecolor{light}{gray}{.9}
\title[LDP of the empirical flow]{Large deviations of the empirical
  flow for continuous time Markov chains}
\author[L.\ Bertini]{Lorenzo Bertini}
\address{Lorenzo Bertini \hfill\break \indent
   Dipartimento di Matematica, Universit\`a di Roma `La Sapienza'
   \hfill\break \indent
   P.le Aldo Moro 2, 00185 Roma, Italy}
 \email{bertini@mat.uniroma1.it}
\author[A.\ Faggionato]{Alessandra Faggionato}
\address{Alessandra Faggionato \hfill\break \indent
  Dipartimento di Matematica, Universit\`a di Roma `La Sapienza'
  \hfill\break \indent
  P.le Aldo Moro 2, 00185 Roma, Italy}
\email{faggiona@mat.uniroma1.it}
\author[D.\ Gabrielli]{Davide Gabrielli}
\address{Davide Gabrielli \hfill\break \indent
  Dipartimento di Matematica, Universit\`a dell'Aquila
  \hfill\break\indent
  Via Vetoio,   67100 Coppito, L'Aquila, Italy}
\email{gabriell@univaq.it}
\begin{document}

\begin{abstract}
  We consider a continuous time Markov chain on a countable state
  space and prove a joint large deviation principle for the empirical
  measure and the empirical flow, which accounts for the total number
  of jumps between pairs of states. We give a direct proof using
  tilting and an indirect one by contraction from the empirical
  process.

\bigskip

\noindent \textsc{R\'{e}sum\'{e}}. \hspace{0.05cm}
On consid\`ere une cha\^ine de Markov en temps continu \`a espace d'\'etats
den\'ombrable, et on prouve un principe de grandes d\'eviations commune
pour la mesure empirique et le courant empirique, qui repr\'esente le
nombre total de sauts entre les paires d'\'etats. On donne une preuve
directe \`a l'aide d'un \emph{tilting}, et une preuve indirecte par
contraction, \`a partir du processus empirique.

\bigskip

\noindent {\em Keywords}: Markov chain, Large deviations principle,
Entropy, Empirical flow.

\medskip

\noindent{\em AMS 2010 Subject Classification}:
60F10,  
60J27;  
Secondary
82C05.  
\end{abstract}

\maketitle
\thispagestyle{empty}

\section{Introduction}
\label{introduzione}

One of the most important contribution in the theory of large
deviations is the series of papers of Donsker and Varadhan
\cite{DV4} where the authors develop a general approach to the study
of large deviations for Markov processes both in continuous and
discrete time.  They establish large deviations principles (LDP) for
the empirical measure and for the empirical process associated to a
Markov process.  Given a sample path of the process on the finite time
window $[0,T]$, the corresponding empirical measure is a probability
measure on the state space that associates to any measurable subset
the fraction of time spent on it. A LDP for the empirical measure is
usually called a \emph{level 2} LDP.  Given a sample path, the
corresponding empirical process is a probability measure on paths
defined on the infinite time window $(-\infty,+\infty)$.  More
precisely, it is the unique stationary (with respect to time shift)
probability measure that gives weight $1$ to $T$--periodic paths
such that there exists a period $[t,t+T]$ where they coincide
with the original sample path. A LDP for the empirical process is
usually called a \emph{level 3} LDP.


The large deviations asymptotic of discrete time Markov chains on a
countable state space can be described as follows,
see for example \cite{DZ,dH}.
The rate function for the level $3$ LDP is the relative entropy per
unit of time. The rate function for the level $2$ LDP has instead in
general only a variational representation, which cannot be solved
explicitly even for reversible transition probabilities.  A very
natural and much studied object is the $k$--symbols empirical
measure. This is a probability measure on strings of symbols with
length $k$ obtained from the frequency of appearance in the sample
path.  With a suitable periodization procedure the $k$--symbols
empirical measures constitute a consistent family of measures that are
exactly the $k$ marginals of the empirical process. For each $k>1$,
and in particular for $k=2$ the rate function for the LDP associated
to the $k$ symbols empirical measure has an explicit expression.

The aim of this paper is to provide an analogous picture for
continuous time Markov chains on a countable state space.
For the empirical process the rate function is always the relative entropy
per unit of time. For the empirical measure the rate function has instead only
a variational representation.  In the case of reversible Markov chains
the corresponding variational problem can be solved and the rate
function is related to the Dirichlet form.
In the continuous time setting the natural counterpart  of the
$2$--symbols empirical measure is the empirical flow that can be
defined as follows.
Given a sample path of the Markov chain in the finite time window
$[0,T]$, the corresponding empirical flow is the positive measure
on the pairs of states assigning to each pair a weight given by
the corresponding number of jumps
per unit of time.

As in the discrete time setting, the joint rate function for the
empirical measure and flow can always be written in a closed form (formula
\eqref{rfq} below).
This joint rate function for the empirical measure and flow first
appeared in applied contexts. Originally in information technology
\cite{dlF,WK} and more recently in statistical mechanics
\cite{maes}. In particular, in \cite{WK} it has been used to recover
by contraction the Donsker--Varadhan rate function for the empirical
measure in the case of a state space with only two elements. Being a
LDP intermediate among level 2 and level 3, the authors called it a
level 2.5 LDP.  Later in \cite{BP}, motivated by statistical
applications, the authors have showed that the contraction on the
empirical measure of the rate function proposed by \cite{WK} leads to
the Donsker-Varadhan rate function in the case of finite state space.
In \cite{dlF} a weak level 2.5 LDP has been proved. Finally in
\cite{maes} LDPs for flows and currents have been discussed in
relation to non equilibrium thermodynamics.

In the present paper we give a rigorous proof of a full LDP for Markov
chains on a countable state space.  In the case of infinite state
space, the empirical flow exhibits novel phenomena with respect to the
empirical measure.  In particular,
the Markov chain
could perform  very long excursions towards infinity  in very short time. Therefore, the exponential tightness of the empirical flow
requires additional conditions and poses non trivial topological
issues. We have solved these problems by introducing the bounded weak*
topology and adding an extra condition with respect to the Donsker-Varadhan
ones (see Item (vi) in Condition 2.2). This condition is sharp
as it is also necessary for the exponential tightness of
the empirical flow in the case of  birth and dead processes.
Another technical issue is whether the LDP for the empirical measure
and flow holds in a stronger topology. For the empirical flow a
natural candidate is the strong $L^1$ topology. However, as shown in
the case of birth and dead processes,
the rate function has not in general compact level sets in
the strong $L^1$ topology for flows.

We present two different proofs of the LDP of the empirical measure
and flow. A direct derivation is obtained using a perturbation of the
original Markov measure (under the additional assumption that the
graph underlying the Markov chain is locally finite), while an
indirect derivation is obtained by contraction from the level 3 LDP.
In the last case, the contraction principle (which anyway requires
some work as the map is not continuous and the topology is not
metrizable) leads to a partial result.  In fact, the main point is the
identification of the rate function obtained by contraction with the
closed form \eqref{rfq}. In order to prove this identity - which does
require additional conditions with respect to the level 3 LDP - we
need a geometrical analysis of divergence--free flows on graphs
(see Section~\ref{s:geometria}) which plays a fundamental role also in
direct proof of the LDP by exponential tilting.
Besides the Donsker-Varadhan conditions, level 2 and 3 LDPs can be
proven under hypercontractivity condition, see \cite{DeS}.
Also in this framework, the exponential tightness of the empirical flow
requires the additional condition that the inverse of the mean
holding time  has a finite exponential
moment with respect to the invariant measure.

We mention some recent results about fluctuations of currents
and flows inspiring and motivating the present work. We already
mentioned the paper \cite{maes}.
In \cite{KKT,K} LDPs for the current  of the
Brownian motion on a compact Riemann manifold are obtained.  We
mention also the recent preprint \cite{MSZ} on the joint large
deviations for the empirical measure and flow for a renewal process on
a finite graph. Currents play also a crucial role in biochemical
processes, and the study of large fluctuations and related symmetries
have recently received much attention (see e.g. \cite{FD,LM}
and references therein).
As development of the result given here,
in \cite{BFG1} we recover the LDP for the empirical measure by
contraction from the joint LDP proved here.
In \cite{BFG2} we shall discuss several applications and consequences
of our results like LDPs for currents and  Gallavotti--Cohen
symmetries. In \cite{BFG2}
we will also give sufficient
conditions leading to the joint LDP for empirical measure and flow
when endowing the flow space of the strong $L^1$ topology. We also mention that in \cite{BaBe}
the scheme proposed here  has been extended to the case of continuous time jump processes with an absorbing state,  motivated by the  study of energy transport in insulators.

We finally outline some possible applications of the LDP for empirical
measure and flow in the context of interacting particle systems.
(i) The LDP for the total number of jumps per unit of time has been
recently analyzed in \cite{BLT,BT} for some constrained interacting
particle systems including the east model. In the limit of infinitely
many particles, the associated rate function exhibits a non--trivial
zero level set thus leading to second order large deviations. The
second order rate function is conjectured and partially proven in
\cite{BT}.  This problem can be attacked, by a purely variational
procedure, starting from the joint rate function for empirical measure
and flow.
(ii) In the context of hydrodynamic scaling limits the LDP of
the current has been analyzed in
\cite{BDGJL1,BDGJL2,bd,bd1}. In this
setting a natural problem is the large deviation properties of the
time averaged hydrodynamical current in the large time limit. The
corresponding rate function exhibits interesting phenomena.  On the
other hand, one can take the large time limit before the limit of
infinitely many particles. As the hydrodynamical current can be
written in terms of the empirical flow one can take the scaling limit
in the joint LDP for the empirical measure and flows. If all goes well
one then recovers the hydrodynamical rate function.  In the special
case of the one--dimensional boundary driven zero range process, the
LDP for the current of particles across an edge of the lattice has
been computed by combinatorial techniques in \cite{HRS} based on a
suitable ansatz. In the limit of infinitely many particles it yields
the hydrodynamical result. In principle, this problem, including the
validity of the ansatz in \cite{HRS}, could be addressed starting from
the joint LDP for the empirical measure and flow.
(iii) Always in the context of hydrodynamical scaling limit, the LDP
for the net flow of particles across a segment of the two-dimensional
torus has been analyzed in \cite{bdl}. In particular, it is shown that
the large deviations asymptotic degenerates due to the occurrence of
small vortices near the endpoints of the segment. A non-trivial LDP
should hold in a suitable logarithmic rescaling. This phenomenon can
be analyzed already for a single random walk for which it becomes a
problem on the scaling limit of the rate function here derived.


\section{Notation and results}
\label{definizioni}

We consider a continuous time Markov chain $\xi_t$, $t \in \bb R_+$
on a countable (finite or infinite) state space $V$. The Markov
chain is defined in terms of the \emph{jump rates} $r(x,y)$, $x \not
=y$ in $V$, from which one derives the holding times and the jump
chain \cite[Section 2.6]{N}. Since the holding time at $x\in V$ is
an exponential random variable of parameter
 $r(x): =\sum_{y\in V}
r(x,y)$,  we need to assume  that $r(x)<+\infty$ for any $x \in V$.

The basic assumptions on the chain are the following:
\begin{itemize}
\item[(A1)]
  for each $x \in V$, $r(x)= \sum _{y \in V} r(x,y)$ is finite and
  strictly positive;
\item[(A2)]
   for each $x \in V$ the Markov chain $\xi^x_t$ starting from $x$
   has no explosion a.s.;
\item[(A3)]
  the Markov chain is irreducible, i.e.\ for each $x,y \in V$ and $t>0$
  the event $\{\xi^x_t=y\}$ has strictly positive probability;
\item[(A4)]
  there exists a unique invariant probability measure, that is denoted
  by $\pi$.
\end{itemize}

As in \cite{N}, by invariant probability measure $\pi$ we mean a
probability measure on $V$ such that
\begin{equation}
  \label{invariante}
  \sum _{y \in V} \pi(x) \, r(x,y)
  = \sum _{y \in V} \pi(y) \, r(y,x)\qquad \forall \:x \in V
\end{equation}
where we understand $r(x,x)=0$. We recall some basic facts from
\cite{N}, see in particular Section~3.5 and Theorem  3.8.1 there.
Assuming (A1) and irreducibility (A3), assumptions (A2) and (A4)
together are equivalent to the fact that all states are positive
recurrent.
  In (A4) one could remove the assumption of
uniqueness of the invariant probability measure, since for an
irreducible Markov chain there can be at most only one.
 Under the above assumptions,
$\pi(x)>0$ for all $x \in V$, the Markov chain starting with
distribution $\pi$ is stationary (i.e.\ its law  is left invariant
by time-translations), and the ergodic theorem holds, i.e.\ for any
bounded function $f: V \to \bb R$ and any initial distribution
\begin{equation}
  \label{ergodico}
  \lim_{T \to +\infty} \frac{1}{T} \int_0^T\!dt\, f(\xi_t)
  = \langle\pi, f\rangle
  \qquad \textrm{a.s.}
\end{equation}
where $\langle\pi,f\rangle$ denotes the expectation of $f$ with
respect to $\pi$. Finally, we observe that if $V$ is finite then
(A1) and (A2) are automatically satisfied, while (A3) implies (A4).

We consider $V$ endowed with the discrete topology and the
associated Borel $\sigma$-algebra given by the collection of all the
subsets of $V$.  Given $x\in V$, the distribution of the Markov
chain $\xi^x_t$ starting from $x$, is a probability measure on the
Skorohod space $D(\bb R_+;V)$ that we denote by $\bb P_x$. The
expectation with respect to $\bb P_x$ is denoted by $\bb E_x$.
In the sequel we consider $D(\bb R_+;V)$ equipped with
the Skorohod topology, the associated Borel $\sigma$--algebra, and
the canonical filtration.
The canonical coordinate in $D(\bb R_+;V)$ is denoted by $X_t$.
The set of probability measures on $V$ is denoted by $\mc P(V)$ and it
is considered endowed with the topology of weak convergence and the
associated Borel $\sigma$-algebra. Since $V$ has the discrete
topology, the weak convergence of $\mu_n$ to $\mu$ in $\mc P (V)$ is
equivalent to the pointwise convergence of $\mu_n(x)$ to $\mu(x)$ for
any $x\in V$.

\subsection{Empirical measure and empirical flow}
\label{s:emef}

Given $T>0$ the \emph{empirical measure}
$\mu_T\colon D(\bb R_+;V)\to \mc P(V)$ is defined by
\begin{equation*}
  \mu_T \, (X) = \frac 1T\int_0^T\!dt \, \delta_{X_t}
\end{equation*}
where $\delta_y$ denotes the pointmass at $y$.
Given $x\in V$, the ergodic theorem \eqref{ergodico} implies that the
empirical measure $\mu_T$ converges $\bb P_x$ a.s.\ to $\pi$ as $T \to
\infty$.
In particular, the sequence of probabilities $\{\bb P_x \circ
\mu_T^{-1}\}_{T>0}$ on $\mc P(V)$ converges to $\delta_\pi$.


We denote 
by $E$ the countable set of ordered edges in $V$
with strictly positive jump rate:
 \begin{align*}
& E:= \{(y,z) \in V\times V  \,:\, r(y,z)>0\}\,.
\end{align*}
For each $T>0$ we  define  the \emph{empirical flow} as the map $Q_T
\colon D(\bb R_+;V)\to [0,+\infty]^E $ given by  
\begin{equation}
  \label{montecarlo}
  Q_T(X)
  :=
  \frac{1}{T} \sum_{    t\in [0,T]\,:\:  X_{t^-} \neq X_{t^{\phantom{-}}}}
  \delta_{(X_{t^-} ,X_{t}^{\phantom{-}})}
\end{equation}
Namely, $ T Q_T (X) \, (y,z)$ is the  number of jumps from
$y$ to $z$ in the time interval $[0,T]$ of the path $X$.

\begin{remark}\label{SD}
By the graphical construction of the Markov chain, the random fi\-eld
$\{T Q_T (y,z) \} _{(y,z) \in E}$ under $\bb P_x$ is stochastically
dominated by the random field $\{\mc Z_{y,z}\}_{(y,z) \in E}$
given by independent Poisson  random variables,  $\mc Z_{y,z}$
having mean $Tr(y,z)$.
\end{remark}

We denote by $L^1(E)$ the collection of absolutely summable
functions on $E$ and by $\| \cdot \|$ the associated $L^1$--norm.
The set of nonnegative elements of $L^1(E)$ is denoted by
$L^1_+(E)$.
Since the chain is not explosive, for each
$T>0$ we 
have $\bb P_x$ a.s. that $Q_T\in L^1_+(E)$.

Given a flow $Q\in L^1_+ (E) $ we let its \emph{divergence} $\div Q
\colon V\to \bb R$ be the function defined by
\begin{equation}
  \label{divergenza_fluss}
  \div Q \, (y)= \sum _{z : \, (y,z)\in E} Q(y,z)- \sum_{z:\, (z,y)\in E} Q(z,y),
  \qquad y\in V.
\end{equation}
Namely, the divergence of the flow $Q$ at $y$ is given by the
difference between the flow exiting from $y$ and the flow entering
into $y$. Observe that the divergence maps $L^1_+(E)$ to $L^1(V)$.


Finally, to  each probability $\mu \in \mc P(V)$
we associate the flow $Q^\mu\in \bb R_+^E$ defined by
\begin{equation}
  \label{Qmu}
  Q^\mu(y,z) := \mu(y) \, r(y,z)
  \qquad (y,z)\in E.
\end{equation}
Note that  $Q^\mu\in L^1_+(E) $ if and only if $\langle\mu,
r\rangle<+\infty$. Moreover, in this case, by \eqref{invariante}
$Q^\mu$ has vanishing divergence if only if $\mu$ is invariant for
the Markov chain $\xi$, i.e. $\mu=\pi$.

We now discuss the law of large numbers for the empirical flow.
 As
follows from simple computations (see  \cite[Lemma\,II.2.3]{S} and
\cite[App.\,1, Lemma\,5.1]{KL}, which have to be  generalized  to
the case of unbounded $r(\cdot)$ by means of  \cite[Sec.\,2.8]{N}
and Remark \ref{SD}) for each $(y,z)\in E$ the process
\begin{equation}\label{dacitare}
  M_T \, (y,z) := T \, Q_T (X)\, (y,z) - \int_0^T\!dt \, \delta_y(X_t) r(y,z)
\end{equation}
is a martingale with respect to $\bb P_x$, $x\in V$. Moreover, the
predictable quadratic variation of $M_T(y,z)$, denoted by $\langle
M(y,z)\rangle_T$ is given by
\begin{equation*}
  \langle M(y,z)\rangle_T = \int_0^T\!dt \, \delta_y(X_t) r(y,z)\, .
\end{equation*}
In view of the ergodic theorem \eqref{ergodico}, we conclude that
for each $x\in V$ and $(y,z)\in E$ the family  of real random
variables $Q_T(y,z)$ converges, in probability with respect to $\bb
P_x$, as $T\to+\infty$ to $Q^\pi(y,z)$. We refer to Remark
\ref{willy} for an alternative proof.

\subsection{Compactness conditions}
The classical Donsker-Varadhan theorem \cite{DZ,DeS,DV4,Vld}
describes the LDP associated to the empirical measure. The main
purpose of the present paper is to extend this result  by
considering also the empirical flow.

Below we will state two LDPs (Theorem \ref{LDP:misura+flusso} and
Theorem \ref{LDPteo2}) for the joint process given by the empirical
measure and flow.
In Theorem
\ref{LDP:misura+flusso} the  flow space is given by $L_1^+(E)$
endowed of the bounded weak* topology and, in order to have some
control at infinity in the case of infinite state space $V$,
compactness assumptions  are required.
  In Theorem \ref{LDPteo2}
the flow space is given by $[0,+\infty]^E$ endowed of the product
topology and  weaker assumptions are required (the same of
\cite{DV4}). On the other hand, the rate function has not always a
computable form.


 Let us now state precisely the compactness conditions under
which Theorem \ref{LDP:misura+flusso} holds (at least one of the
following  Conditions \ref{t:ccomp},  \ref{t:ccompls}
has to be satisfied). To this aim,
 given $f\colon V\to \bb R$ such that  $\sum_{y\in V }r(x,y) \, |f(y)| <+\infty$
for each $x\in V$,
 we
denote by $Lf \colon V\to \bb R$ the function defined by
\begin{equation}
  \label{Lf}
  L f\, (x) := \sum_{y\in V} r(x,y) \big[ f(y)-f(x)\big]
  ,\qquad x\in V.
\end{equation}

\begin{condition}
  \label{t:ccomp}
  There exists a sequence of functions $u_n\colon V \to (0,+\infty)$
  satisfying the following requirements:
  \begin{itemize}
  \item [(i)] For each $x\in V$ and $n\in\bb N$ it holds $\sum_{y\in V} r(x,y)
    u_n(y) <+\infty$. In the sequel $Lu_n \colon V \to \bb R$ is the
    function defined by \eqref{Lf}.
  \item [(ii)] The sequence $u_n$ is uniformly bounded from below.
    Namely, there exists $c>0$ such that $u_n(x)\ge c$ for any $x\in
    V$ and $n\in\bb N$.
  \item[(iii)] The sequence $u_n$ is uniformly bounded from above on compacts.
    Namely, for each $x\in V$ there exists a
    constant $C_x$ such that for any $n\in\bb N$ it holds $u_n(x)\le C_x$.
  \item [(iv)] Set $v_n :=  - Lu_n / u_n$. The sequence
    $v_n\colon V\to \bb R$ converges pointwise to some $v\colon V\to \bb R$.
  \item[(v)] The function $v$ has compact level sets.
    Namely, for each $\ell\in \bb R$
    the level set $\big\{x \in V \,:\, v(x)\leq \ell\big\}$ is
    finite.
  \item[(vi)]
    There exist a strictly positive  constant $\sigma$ and a positive
    constant $C$ such that
    $v \ge \sigma \, r - C$.
  \end{itemize}
\end{condition}

\begin{remark}
  Since $u_n >0$, it holds $v_n(x)= \sum _{y\in V} r(x,y) ( 1-
  u_n(y)/u_n(x))<r(x)$. Hence the function $v$ in Condition \ref{t:ccomp}
  must satisfy $v(x) \leq r(x)$ for all $x \in V$. Due to \emph{(v)}, this
  implies that also $r$ has compact level sets. In particular, when
  considering a Markov chain with infinite state space, the function
  $r$ must diverge at infinity.
\end{remark}

Replacing in Condition \ref{t:ccomp} the strictly positive constant
$\sigma$ with zero one obtains the same assumptions of Donsker and
Varadhan for the derivation in \cite{DV4} of the LDP for the
empirical measure of the Markov chain satisfying (A1)--(A4)
(shortly,  we will say that the \emph{Donsker--Varadhan condition} is
satisfied).
In particular,
the empirical measure satisfies a LDP with rate function
$\hat I \colon \mc P(V) \to [0,+\infty]$ given by
\begin{equation}
  \label{DVem}
   \hat I  (\mu)  =  \sup_{u>0}  \Big\{ -  \langle\mu,  Lu/u \rangle
   \Big\}.
\end{equation}
Under the same condition, the empirical process satisfies a LDP (see
Section~\ref{s:proiezione}).
Both these results still hold under a suitable compactness condition
concerning the hypercontractivity of the underlying Markov
semigroup, see 
\cite{DeS}.

With respect to the hypercontractity condition, in order to establish
the exponential tightness of the empirical flow we need extra
assumptions.  Recall that $\pi$ is the unique invariant measure of the
chain. The maps $P_t f(x):= \bb E( f(\xi^x_t ) )$, $t\in \bb R_+$,
define a strongly continuous Markov semigroup on $L^2(V, \pi)$. We
write $D_\pi$ for the Dirichlet form associated to the symmetric part
$(\mathcal L+\mathcal L^*)/2$ of the generator $\mathcal L$ in
$L^2(V,\pi)$. Since the time--reversed dynamics is described by a
Markov chain on $V$ with transition rates $r^*(x,y):=\pi (y) r(y,x)
/\pi (x)$, it holds
\begin{equation}
D_\pi (f)= \frac{1}{4} \sum _{x \in V} \sum_{y\in V} \bigl(
\pi(x)r(x,y)+ \pi(y) r(y,x) \bigr) \bigl( f(y)-f(x) \bigr)^2  \,,
\qquad f \in L^2 (V, \pi)\,.
\end{equation}
One can take this expression as definition of $D_\pi$, avoiding
all technicalities concerning   infinitesimal generators.
One says  that the Markov chain $\xi$ satisfies the  logarithmic
Sobolev inequality if there exists a constant $c_\mathrm{LS} \in
(0,+\infty)$ such that for any $\mu\in \mc P(V)$ it holds
(recall that $\pi(x)>0$  for any $x\in V$)
  \begin{equation}
    \label{ls}
    \Ent (\mu|\pi) \le c_\mathrm{LS} \,  D_\pi\left(\sqrt{\mu/\pi} \right),
  \end{equation}
where $\Ent(\mu|\pi)$ denotes the relative entropy of $\mu$ with
respect to $\pi$.

\begin{condition}
  \label{t:ccompls}
  $~~$
  \begin{itemize}
  \item[(i)]
    The Markov chain satisfies a logarithmic Sobolev inequality.
  \item[(ii)]
    The exit rate $r$ has an exponential moment with respect to the
    invariant measure.  Namely, there exists $\sigma>0$ such that
    $\big\langle \pi, \exp( \sigma \, r\big) \big\rangle <
    +\infty$.
  \item[(iii)]
    The graph $(V,E)$ is locally finite, that is for each vertex $y\in
    V$ the number of incoming and outgoing edges in $y$ is finite.
    \end{itemize}
\end{condition}

Item (iii) is here assumed for technical convenience and it should be
possible to drop it. Item (i) is the hypercontractivity condition
assumed in \cite{DeS} to deduce the Donsker-Varadhan theorem for the
empirical measure. Item (ii) is here required to prove the
exponential tightness of the empirical flow in $L^1_+(E)$.

\begin{remark}\label{piscina98}
By taking in \eqref{ls} $\mu=\delta_x$, Condition~\ref{t:ccompls}--\emph{(i)}
implies that $r$ has compact level sets.
\end{remark}

\subsection{LDP with flow space $L^1_+ (E)$ endowed of the bounded
  weak* topology}
\label{s:ldef}

We consider the space $L^1(E)$ equipped with the so-called
\emph{bounded weak* topology}. This is defined as follows. Recall
that the (countable) set $E$ is the collection of ordered edges in
$V$ with positive jump rate.  Let $C_0(E)$ be the collection of the
functions $F\colon E\to \bb R$ vanishing at infinity, that is the
closure of the functions with compact support in the uniform
topology. The dual of $C_0(E)$ is then identified with $L^1(E)$.
The weak* topology on $L^1(E)$ is the smallest topology
such that the maps $Q\in L^1(E)\to \langle Q,f\rangle\in \mathbb R$ with
$f\in C_0(E)$ are continuous.
Given $\ell>0$, let $B_\ell := \big\{ Q\in L^1(E) :\, \|Q\| \le
\ell\big\}$ be the closed ball of radius $\ell$ in $L^1(E)$
($\|\cdot\|$ being the standard $L^1$--norm). In view of the
separability of $C_0(E)$ and the Banach-Alaoglu theorem, the set
$B_\ell$ endowed with the weak* topology is a compact Polish space.
The bounded weak* topology on $ L^1(E)$ is then defined by declaring
a set $A\subset L^1(E)$ open if and only if $A\cap B_\ell$ is open
in the weak* topology of $B_\ell$ for any $\ell>0$. The bounded
weak* topology is stronger than the weak* topology (they coincide
only when $|E|<+\infty$) and for each $\ell>0$ the closed ball
$B_\ell$ is compact with respect to the bounded weak* topology. The
space $L^1(E)$ endowed with the bounded weak* topology is a locally
convex, complete linear topological space and a completely regular
space (i.e. for every closed set $C \subset L^1(E)$ and every
element $Q \in L^1(E) \setminus C$ there exists a continuous
function $f:L^1(E) \to[0,1]$ such that $f(Q)=1$ and $f(Q')=0$ for
all $Q'\in C$). Moreover, it is metrizable if and only if the set
$E$ is finite. We refer to \cite[Sec.\,2.7]{Me} for the proof of the
above statements and for  further details.


We regard $L^1_+(E)$ as a (closed) subset of $L^1(E)$ and consider it
endowed with the relative topology and the associated Borel
$\sigma$--algebra.
Accordingly, the empirical flow $Q_T$ will be considered as a
measurable map from $D(\bb R_+;V)$ to $L^1_+(E)$, defined $\bb P_x$
a.s., $x\in V$.
Recalling that we consider $\mc P(V)$, the set of probability measures
on $V$, with the topology of weak convergence, we finally consider the
product space $\mc P(V)\times L^1_+(E)$ endowed with the product topology
and regard the couple
$(\mu_T,Q_T)$ where $\mu_T$ is the empirical measure and $Q_T$ the
empirical flow, as a measurable map from $D(\bb R_+;V)$ to $\mc
P(V)\times L^1_+(E)$ defined $\bb P_x$ a.s., $x\in V$.

Below we state the LDP for the family  of probability measures on
$\mc P(V)\times L^1_+(E)$ given by $\big\{ \bb P_x \circ
(\mu_T,Q_T)^{-1} \big\}$ as $T\to+\infty$. Before stating precisely
the result, we introduce the corresponding rate function.  Let
$\Phi\colon \bb R_+ \times \bb R_+ \to [0,+\infty]$ be the function
defined by
\begin{equation}
  \label{Phi}
   \Phi (q,p)
   :=
   \begin{cases}
     \displaystyle{ q \log \frac qp - (q-p)}
     & \textrm{if $q,p\in (0,+\infty)$}
     \\
     \;p  & \textrm{if $q=0$, $p\in [0,+\infty)$}\\
     \; +\infty & \textrm{if $p=0$ and $q\in (0,+\infty)$.}
   \end{cases}
\end{equation}
Since $ \Phi (q,p) = \sup_{\lambda\in\bb R} \big\{ q\lambda -p(e^\lambda -1)
\big\}$, $\Phi$ is lower semicontinuos and convex.
We point out that, given $p>0$ and letting $N_t$, $t\in\bb R_+$ be a
Poisson process with parameter $p$, the sequence of real random variables
$\{N_T/T\}$ satisfies a large deviation principle on $\mathbb R$  with rate
function $\Phi (\cdot,p)$ as $T \to \infty$. This statement can be
easily derived from the G\"{a}rtner-Ellis theorem, see e.g\
\cite[Thm.~2.3.6]{DZ}. Recalling \eqref{divergenza_fluss} and
\eqref{Qmu}, we let $I\colon \mc P(V)\times L^1_+(E) \to
[0,+\infty]$ be the functional defined by
\begin{equation}
  \label{rfq}
  I (\mu,Q) :=
  \begin{cases}
    \displaystyle{
    \sum_{(y,z)\in E} \Phi \big( Q(y,z),Q^\mu(y,z) \big)
    }& \textrm{if  } \; \div Q =0\,,\; \langle \mu,r \rangle < +\infty
    \\
    \; +\infty  & \textrm{otherwise}.
  \end{cases}
\end{equation}
\begin{remark}\label{silente}
  In view of the lower semicontinuity and convexity of $\Phi$, $I$ is
  lower semicontinuous (apply Fatou lemma) and convex.
  Moreover, as proved in Appendix \ref{iobimbo},
  if $\langle \mu , r \rangle =+\infty$ the series in \eqref{rfq}
  diverges. Hence the condition $\langle \mu, r \rangle < +\infty$ can
  be removed from the first line of \eqref{rfq}.
\end{remark}

\begin{theorem}
  \label{LDP:misura+flusso}
  Assume the Markov chain satisfies (A1)--(A4) and at least one
  between Conditions \ref{t:ccomp} and   \ref{t:ccompls}.
  Then as $T\to+\infty$ the family of probability measures $\{\bb
  P_x\circ (\mu_T,Q_T)^{-1}\}$ on $\mc P(V)\times L^1_+(E)$
  satisfies a large deviation principle, uniformly for $x$ in
  compact
  subsets of $V$, with good and convex rate function $I$.
  Namely, for each not empty compact set $K \subset V$,
  each closed set $\mc C\subset \mc P(V)\times L^1_+(E)$,
  and each open  set $\mc A \subset \mc P(V)\times L^1_+(E)$, it
  holds
  \begin{align}
    \label{ubldp}
    & \varlimsup_{T\to+\infty}\;
    \sup_{x \in K} \;
    \frac 1T \log \bb  P_x \Big( (\mu_T,Q_T) \in \mc C \Big)
    \le -\inf_{(\mu,Q)\in \mc C} I(\mu,Q),
    \\
    \label{lbldp}
    & \varliminf_{T\to+\infty}\;
    \inf_{x \in K} \;
    \frac 1T \log \bb P_x \Big( (\mu_T,Q_T) \in \mc A \Big)
    \ge -\inf_{(\mu,Q)\in \mc A} I(\mu,Q).
   \end{align}
\end{theorem}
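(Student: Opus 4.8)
I would follow the direct route via exponential tilting: prove the bounds \eqref{ubldp} and \eqref{lbldp} separately and identify the rate function coming from the upper bound with the explicit functional $I$ of \eqref{rfq}. The first task is exponential tightness of $\{\bb P_x\circ(\mu_T,Q_T)^{-1}\}$ on $\mc P(V)\times L^1_+(E)$, uniformly for $x$ in the compact (equivalently finite) subsets of $V$, since neither $\mc P(V)$ in the weak topology nor $L^1_+(E)$ in the bounded weak* topology is compact. For the empirical measure this is precisely what the strengthened hypotheses provide: from the positive supermartingale $\exp\{\log u_n(X_0)-\log u_n(X_T)+\int_0^T v_n(X_s)\,ds\}$ built from the $u_n$ of Condition \ref{t:ccomp}(i)--(iv), together with the compact level sets of $v$ in item (v), one produces for every $\delta>0$ finite sets $K_\ell\uparrow V$ with $\varlimsup_T \tfrac1T\log\sup_{x\in K}\bb P_x(\mu_T(K_\ell^c)>\delta)\to-\infty$; under Condition \ref{t:ccompls} the same follows from the logarithmic Sobolev inequality \eqref{ls}. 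For the flow one controls the total mass: summing the martingale decomposition \eqref{dacitare} over all edges gives $\|Q_T\|=\langle\mu_T,r\rangle+T^{-1}M_T$ with $M_T$ a martingale (legitimate by the Poisson domination of Remark \ref{SD}), and the surplus $\sigma r$ in item (vi) — respectively the exponential moment $\langle\pi,e^{\sigma r}\rangle<\infty$ of Condition \ref{t:ccompls}(ii) — upgrades the preceding control to $\varlimsup_T\tfrac1T\log\sup_{x\in K}\bb P_x(\|Q_T\|>\ell)\to-\infty$ as $\ell\to\infty$, which is exponential tightness in $L^1_+(E)$ because the balls $B_\ell$ are bounded-weak*-compact.

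For the upper bound I would use, for each finitely supported $F\colon E\to\bb R$, the exponential-martingale identity $\bb E_x\big[\exp\{T\langle Q_T,F\rangle-T\langle\mu_T,\hat r^F\rangle\}\big]=1$, where $\hat r^F(y):=\sum_z r(y,z)\big(e^{F(y,z)}-1\big)$ is bounded with finite support; Chebyshev's inequality then gives the upper bound on compact sets with rate function $\bar I(\mu,Q):=\sup_{F}\{\langle Q,F\rangle-\langle\mu,\hat r^F\rangle\}$, and exponential tightness upgrades it to all closed sets. Optimizing $F$ edge by edge yields $\bar I(\mu,Q)=\sum_{(y,z)\in E}\Phi\big(Q(y,z),Q^\mu(y,z)\big)$ (with Remark \ref{silente} disposing of the case $\langle\mu,r\rangle=+\infty$), but $\bar I$ does not see the zero-divergence constraint. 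That constraint is forced instead by the deterministic telescoping identity $\div Q_T(y)=T^{-1}\big(\delta_{X_0}(y)-\delta_{X_T}(y)\big)$, whence $\langle\div Q_T,g\rangle=O(1/T)$ for every bounded $g$: a closed set sitting, in the appropriate sense, away from $\{\div Q=0\}$ carries no mass for large $T$, and combining this with the martingale bound replaces $\bar I$ by $I$.

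For the lower bound I would fix $(\mu,Q)$ with $I(\mu,Q)<\infty$, so that $\div Q=0$, $\langle\mu,r\rangle<\infty$, and $\mu(y)>0$ whenever $Q$ charges an edge out of $y$, and tilt to the Markov chain with rates $\tilde r(y,z):=Q(y,z)/\mu(y)$. The point is that, by \eqref{invariante} and \eqref{Qmu}, $\div Q=0$ is exactly the statement that $\mu$ is invariant for $\tilde r$; hence under $\widetilde{\bb P}_x$ one has $(\mu_T,Q_T)\to(\mu,Q)$, while the Radon--Nikodym derivative satisfies $\tfrac1T\log\frac{d\widetilde{\bb P}_x}{d\bb P_x}\big|_{\mc F_T}=\langle Q_T,\log(\tilde r/r)\rangle-\langle\mu_T,\tilde r-r\rangle\to\langle Q,\log(Q/Q^\mu)\rangle-\langle\mu,\tilde r-r\rangle=I(\mu,Q)$ $\widetilde{\bb P}_x$-a.s.; the usual change-of-measure argument then gives $\varliminf_T\tfrac1T\log\bb P_x\big((\mu_T,Q_T)\in\mc A\big)\ge-I(\mu,Q)$ on neighbourhoods $\mc A$. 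Since $\tilde r$ need not define an irreducible, non-explosive chain with invariant measure $\mu$, this must be run first for a dense class of good targets — say $\mu$ strictly positive and $Q$ a finite-subgraph flow plus a small multiple of $Q^\pi$ — and then extended to the general case by lower semicontinuity of $I$ and an approximation in the bounded weak* topology; this is where the extra technical assumption for the direct proof enters.

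The main obstacles are two. The first is the exponential tightness of the flow in $L^1_+(E)$: this is exactly why the bare Donsker--Varadhan compactness condition (the case $\sigma=0$) does not suffice, and it also requires checking that the divergence map $\div\colon L^1_+(E)\to L^1(V)$ behaves well in the non-metrizable bounded weak* topology — clean, e.g., when the graph is locally finite as in Condition \ref{t:ccompls}(iii). The second is the density/approximation step in the lower bound, where one must simultaneously approximate $(\mu,Q)$ in that topology, keep $I$ continuous along the approximation, and make the tilted chains legitimate. As an alternative I would also develop the indirect route: the Donsker--Varadhan level-$3$ LDP for the empirical process $R_T$, with rate function the relative entropy density $h$, combined with the contraction principle along the (essentially continuous) map $R\mapsto(\mu_R,Q_R)$, yields the LDP for $(\mu_T,Q_T)$ with rate $\inf\{h(R):\mu_R=\mu,\ Q_R=Q\}$; the remaining work is to solve this variational problem — the minimiser is the Markov chain with rates $Q(y,z)/\mu(y)$, whose entropy density equals $I(\mu,Q)$ — together with an inverse-contraction or truncation argument to handle the non-continuity of $R\mapsto Q_R$ on unbounded flows.
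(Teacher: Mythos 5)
Your proposal reproduces, in outline, the direct proof given in Section~\ref{s:diretto} of the paper: exponential tightness via the supermartingales of Lemmas~\ref{t:em1}--\ref{t:em2}, upper bound via Cram\'er/Chebyshev tilting with finitely supported $F$ and the pathwise continuity equation \eqref{pce}, lower bound via the change of measure to the chain with rates $Q(y,z)/\mu(y)$ plus a density argument. You also correctly identify the two obstacles and mention the indirect route (contraction from the level-3 LDP) as an alternative. The essential mismatch is in the logic of what each route proves. The direct proof establishes Theorem~\ref{LDP:misura+flusso} only under the \emph{additional} hypothesis that $(V,E)$ is locally finite, and this hypothesis is not part of the theorem's statement (it is absent from Condition~\ref{t:ccomp}). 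The complete proof of the theorem as stated is the indirect one, which you treat as secondary: one projects from the Donsker--Varadhan empirical-process LDP via the (non-continuous) map $R\mapsto\hat Q(R)$, using exponential approximations $\hat Q_\epsilon$ in the completely regular space $L^1_+(E)$ (Propositions~\ref{cervo2}--\ref{cervo3} and \cite{ES}). Relegating this to an afterthought leaves a real gap for the general case.

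A related point is that you misattribute where local finiteness enters. You locate it in the flow's exponential tightness and in the lower-bound density step, but neither needs it: exponential tightness only requires the $L^1$-norm bound \eqref{arcobaleno2}, and the density result (Proposition~\ref{t:dt}) and the cyclic-decomposition Lemma~\ref{t:cicli} are proved without it. Where local finiteness is really needed is in the \emph{upper} bound: to apply the min--max lemma you must know that $I_{\phi,F}(\mu,Q)=\langle\phi,\div Q\rangle+\langle Q,F\rangle-\langle\mu,r^F-r\rangle$ is continuous on $\mc P(V)\times L^1_+(E)$, and the map $Q\mapsto\langle\phi,\div Q\rangle$ is continuous in the bounded weak* topology precisely when $\nabla\phi\in C_0(E)$, which is guaranteed by local finiteness and fails in general (Appendix~\ref{div_disco}). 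Your phrase ``a closed set sitting away from $\{\div Q=0\}$ carries no mass'' hides exactly this: without continuity of $\div$, the zero-divergence constraint is not captured by a closed set in the product topology, and the telescoping identity alone does not rescue the closed-set upper bound. Two smaller inaccuracies: the exponential-martingale identity is $\bb E_x[\bb M^F_T]\le 1$ (equality can fail by possible explosion of the tilted chain), and for the flow tightness the $L^2$-type martingale decomposition \eqref{dacitare} summed over edges is too weak — you need the exponential supermartingale with constant $F\equiv\lambda$ combined with the $\sigma r$ surplus, as in the proof of \eqref{arcobaleno2}.
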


As discussed in Lemma \ref{basket}, under the assumptions in
Theorem~\ref{LDP:misura+flusso} it holds $\langle \pi, r\rangle <+
\infty$. In particular, $I(\mu,Q)=0$ if and only if
$(\mu,Q)=(\pi,Q^\pi)$. 
Hence, from the LDP one derives the law of large numbers for the
empirical flow in $L_+^1 (E)$, improving the pointwise version
discussed at the end of Section \ref{s:emef}. In addition, the
function $I$ has an affine structure:


 \begin{proposition}\label{gauchito}
Let  $(\mu , Q)\in \mc P(V) \times L^1_+(E)$ satisfy  $I(\mu,Q)<+\infty$. Then
\begin{itemize}
\item[(i)] All edges in the support $E(Q)$ of $Q$ connect
  vertices in the support of $\mu$, i.e.\ if $Q(y,z)>0$ then $y,z \in
  \supp (\mu) $.
\item[(ii)]
 Let $E^u(Q):=\bigl\{\,\{y,z\} \,:\, (y,z) \in E(Q) \text{ or } (z,y) \in
 E(Q)\bigr\}$.
  The oriented connected components of the oriented graph
  $(\supp (\mu), E(Q))$ coincide with the connected components
  of the unoriented graph  $(\supp (\mu), E^u(Q))$. 
 \item[(iii)]  $I(\mu,Q)$ has  the following affine
   decomposition. Consider the oriented graph $(\supp (\mu), E(Q))$
   and let $K_j$, $j \in J$, be the family of its oriented  connected
   components.  Consider the probability measure
  $\mu_j (\cdot) := \mu (\cdot | K_j)$
  and the flow $Q_j\in L^1_+(E)$ defined as
$$Q_j(y,z)= \begin{cases}
  \frac{Q(y,z)}{\mu(K_j) } & \text{ if } (y,z) \in E, \; y,z \in K_j \,,\\
  0 & \text{ otherwise}.
\end{cases}
$$
Then we have $(\mu, Q)=\sum_{j\in J}\mu(K_j)(\mu_j,Q_j)$ and
 \begin{equation}
I(\mu,Q) = \sum _{j \in J} \mu(K_j) I(\mu_j, Q_j) \,.
\end{equation}

\end{itemize}
\end{proposition}

For the unfamiliar reader, the definition of (oriented) connected
components is recalled after Remark \ref{zecchino}. Note that the
oriented components of $(\text{supp} (\mu), E(Q))$ coincide with the
irreducible classes of the Markov chain on $\text{supp}(\mu)$ with
transition rates $r(y,z):= Q(y,z)/\mu(y)$. Moreover, note that due to
Item (i) the graph $(\text{supp} (\mu), E(Q))$ is well defined. The
proof of Proposition \ref{gauchito} is given
in Section \ref{s:geometria}.

\subsection{LDP with flow space $[0,+\infty]^E$ endowed of the
product  topology}  \label{prodotto}

When considering the product topology on $[0,+\infty]^E$ we take
$[0,+\infty]$ endowed of the metric making the map $x \to
\frac{x}{1+x}\in [0,1]$ an isometry. Namely, on $[0,+\infty]$ we take
the metric $d(\cdot, \cdot)$ defined as $d(x,y)=\bigl| x/(1+x)
-y/(1+y)\bigr|$.  It is standard to define on the space
$[0,+\infty]^E$ a metric $D(\cdot, \cdot)$ inducing the product
topology: enumerating the edges in $E$ as $e_1,e_2, \dots$ we set $D(
Q, Q'):= \sum_{n=1}^{|E|} 2^{-n} d\bigl( Q(e_n), Q'(e_n)\bigr)$.

We write  $\mc M_S $ for the space of stationary probabilities on $D(\bb
R ;V)$ endowed of the weak topology.
 Given $R \in \mc M_S$ we denote
by $\hat \mu(R)\in \mc P (V) $ the marginal of $R$ at a given time and by $\hat
Q(R)$ the   flow in $[0,+\infty]^E$ defined as
 $\hat Q(R)(y,z):= \bb
E_{R}\bigl[ Q_T(y,z) \bigr]$ for all $(y,z) \in E$, where $\bb E_R$
denotes the expectation with respect to $R$. It is simple to check
that this expectation does not depend on the time $T>0$ (see Lemma \ref{pasquetta}). We point out that  jumps between a pair of states non belonging  to $E$ could take place with positive  $R$--probability. In particular, the flow $\hat Q(R)$ does not correspond to the complete flow associated to $R$.

\begin{lemma}
  \label{pasquetta}
  Given an edge  $(y,z) \in E$ and a stationary process $R \in
  \mc M_S$, the expectation $ \bb E_{R} \bigl[ Q_T (y,z) \bigr]\in
  [0,+\infty]$ does not depend on $T>0$.
\end{lemma}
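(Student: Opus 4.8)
The plan is to recognise, straight from \eqref{montecarlo}, that $T\,Q_T(y,z)$ is the mass that an intrinsic counting measure carried by the path assigns to the interval $[0,T]$, and then to use the stationarity of $R$ to show that the associated averaged (intensity) measure on $\bb R$ is translation invariant; the $T$-independence is then immediate from the classification of such measures.

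First I would fix $(y,z)\in B$ and, for $X\in D(\bb R;V)$, introduce the counting measure $m_X$ on $\bb R$ supported on the instants $t$ with $X_{t^-}=y$ and $X_t=z$; since $X$ is c\`adl\`ag there are at most countably many such $t$, so $m_X$ is a well-defined $[0,+\infty]$-valued Borel measure on $\bb R$, and directly from \eqref{montecarlo} one has $T\,Q_T(y,z)(X)=m_X([0,T])$ for every $T>0$. Next I would set $\lambda(A):=\bb E_R[m_X(A)]$ for Borel $A\subseteq\bb R$: by monotone convergence and the countable additivity of $m_X$ this defines a $[0,+\infty]$-valued Borel measure $\lambda$ on $\bb R$. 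The key step is that, because $(\theta_s X)_{t^-}=X_{(s+t)^-}$, one has $m_{\theta_s X}(A)=m_X(A+s)$ pathwise, and the invariance $R\circ\theta_s^{-1}=R$ then yields $\lambda(A+s)=\lambda(A)$ for every $s\in\bb R$, i.e.\ $\lambda$ is translation invariant.

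To conclude I would invoke the elementary fact that every translation invariant Borel measure $\lambda$ on $\bb R$ satisfies $\lambda([0,T])=c\,T$ for all $T>0$, with $c:=\lambda([0,1))\in[0,+\infty]$: splitting $[0,1)$ into $n$ equal half-open translates gives $\lambda([0,1/n))=c/n$, hence $\lambda([0,q))=c\,q$ for every rational $q\ge 0$, and letting $q\uparrow T$ gives $\lambda([0,T))=c\,T$; moreover $\lambda(\{T\})=\lambda(\{0\})\le\inf_{\varepsilon>0}\lambda([0,\varepsilon))$, which equals $0$ when $c<+\infty$ and makes no difference when $c=+\infty$. Applying this to our $\lambda$ gives $\bb E_R[Q_T(y,z)]=\lambda([0,T])/T=c$ for all $T>0$, which is exactly the assertion of the lemma.

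The argument is short, but two points deserve care. One is plain measurability: $X\mapsto m_X(A)$ has to be shown measurable for all Borel $A$; for intervals this is immediate because $m_X([a,b])=(b-a)\,Q_{b-a}(y,z)(\theta_a X)$ with $\theta_a$ the (measurable) time shift, and the general case follows by a monotone-class argument. The second, and the real reason for routing everything through the intensity measure $\lambda$ rather than manipulating $\bb E_R[Q_T]$ by hand, is that $R$ is an \emph{arbitrary} stationary process on $D(\bb R;V)$ --- not a non-explosive Markov chain --- so a priori $\lambda$ need be neither finite nor atomless: $R$ could assign positive probability to a $y\to z$ jump at a fixed deterministic time, or to infinitely many jumps in a bounded interval. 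The intensity-measure formulation absorbs all of this into the single value $c=+\infty$, so no case analysis about explosion or about deterministic jump times is required.
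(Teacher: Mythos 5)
Your proof is correct, and it routes through a genuinely different (and arguably cleaner) mechanism than the paper's. The paper works directly with the function $f(T):=\bb E_R[Q_T(y,z)]$: using stationarity to justify that $R(X_t\neq X_{t^-})=0$ at fixed times, it establishes the $R$-a.s.\ periodization identity $Q_T(y,z)(X)=\frac1n\sum_{j=0}^{n-1}Q_{T/n}(y,z)(\theta_{jT/n}X)$, takes expectations to get $f(T)=f(T/n)$, extends to positive rationals ``by standard arguments,'' and finally uses the elementary monotonicity $t_1f(t_1)\le Tf(T)\le t_2f(t_2)$ (which is precisely the monotonicity of your intensity measure $\lambda([0,T])=Tf(T)$) to pass to all real $T$. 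Your argument repackages exactly the same additivity and translation invariance, but hoists it into the intensity measure $\lambda(A)=\bb E_R[m_X(A)]$ once and for all, so that the conclusion $\lambda([0,T])=cT$ becomes the standard classification of translation-invariant measures on $\bb R$ rather than an ad hoc functional-equation chase. This buys you a tighter treatment of the $c=+\infty$ case and eliminates the paper's ``standard arguments'' step, at the modest cost of the measurability discussion (which you handle correctly via $m_X([a,b])=(b-a)\,Q_{b-a}(y,z)(\theta_a X)$ and a monotone-class argument). One small remark: your careful treatment of the atom $\lambda(\{T\})$ via $\lambda(\{T\})=\lambda(\{0\})\le\inf_\epsilon\lambda([0,\epsilon))$ is sound, but for a stationary $R$ one also has the shortcut $\lambda(\{t\})=\bb E_R[m_X(\{t\})]\le R(X_{t^-}\neq X_t)=0$ for every fixed $t$, which is the same observation the paper uses as its opening line.
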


\begin{proof}
Since $R$ is stationary, fixed  $t\in \bb R$ it holds $R( X_t \not =
X_{t-} )=0$. In particular, given $T>0$ and an integer $n$, $R$--a.e.\
it holds
$$
Q_T (X)\, (y,z)
=\frac{1}{n} \sum _{j=0}^{n-1}  Q_{ T/n} (\theta_{j T /n } X)  \, (y,z).
$$
Above we have used the notation $(\theta_s X)_t:= X_{s+t}$. From this
identity and the stationarity of $R$, taking the expectation
w.r.t. $R$ one gets $f(T)=f(T/n)$, where $f(T):=\bb E_R \bigl[ Q_T
(y,z) \bigr]$. Then  by standard arguments one gets that $f(T)=
f(1)$ as $T$ varies among the positive rational numbers.
 Since for $0<t_1 \leq T\leq t_2$ it holds $t_1 f(t_1) \leq
T f(T) \leq t_2 f(t_2)$ it is trivial to conclude that $f(T)$ is
constant as $T$ varies among the positive real numbers.
\end{proof}

We can now state our second main result:

\begin{theorem}\label{LDPteo2}
  Assume the Markov chain satisfies (A1)--(A4) together with the
  Donsker--Varadhan condition. Consider the space $\mc P(V)\times \bb
  [0,+\infty]^E$, with $ \mc P (V)$ endowed of the weak topology and
  $[0,+\infty]^E$ endowed of the product topology.  Then the following
  holds:
  \begin{itemize}
  \item[(i)] As $T\to+\infty$ the family of probability measures
    $\{\bb P_x\circ (\mu_T,Q_T)^{-1}\}$ on $\mc P(V)\times \bb
    [0,+\infty]^E$ satisfies a large deviation principle with good
    rate function
\begin{equation}
\tilde I (\mu,Q):= \inf \Big\{  H(R)\, :\, R \in \mc M_S\,,\;
 \hat \mu(R)=\mu\,,\;
 \hat Q(R)= Q  \Big\} \,.\label{samarcanda1}
  \end{equation}
Above  $H(R)$ denotes the entropy of $R$ with respect to the Markov
chain $\xi$ as defined in \cite{DV4}--(IV) (see Section
\ref{s:proiezione}). Moreover we have
\begin{equation}
\left\{
\begin{array}{ll}
 \tilde I(\mu,Q)=I(\mu,Q) & \text{ if }Q \in L^1_+ (E)\,,\\
 \tilde I(\mu,Q)=+\infty   & \text{ if } Q \not \in [0,+\infty)^E\,.
  \end{array}
  \label{samarcanda}
\right.
\end{equation}

\item[(ii)]
  If in addition Condition \ref{t:ccomp} is satisfied, then the rate
  function $\tilde I$ is given by
\begin{equation}\label{fatto!}
\tilde I(\mu,Q):= \begin{cases} I(\mu,Q) & \text{ if }
Q \in L^1_+(E)\,,\\
+\infty & \text{ otherwise}\,.
\end{cases}
\end{equation}
\end{itemize}
\end{theorem}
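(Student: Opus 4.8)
The plan is to prove Theorem \ref{LDPteo2} by projecting the level-3 Donsker--Varadhan LDP down to the pair (empirical measure, empirical flow), using the contraction principle in the form appropriate for continuous (but not necessarily injective) maps. The first step is to recall from \cite{DV4} that, under (A1)--(A4) and the Donsker--Varadhan condition, the empirical process $\mc R_T$ satisfies an LDP on $\mc M_S$ (with the weak topology) with good rate function $H(\cdot)$, uniformly for $x$ in compacts. Next I would observe that the map $\Psi\colon\mc M_S\to\mc P(V)\times[0,+\infty]^B$ given by $\Psi(R)=(\hat\mu(R),\hat Q(R))$ satisfies $\Psi(\mc R_T)=(\mu_T,Q_T)$ pathwise (this is where Lemma \ref{pasquetta} is used: $\hat Q(\mc R_T)$ is the $T$-averaged flow, which by the periodization built into $\mc R_T$ equals $Q_T$). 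If $\Psi$ were continuous, the contraction principle would immediately give part (i), with $\tilde I(\mu,Q)=\inf\{H(R):\Psi(R)=(\mu,Q)\}$, which is exactly \eqref{samarcanda1}.

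The technical point is that $\hat\mu$ is continuous for the weak topology on $\mc M_S$ and the weak topology on $\mc P(V)$, and $\hat Q$ is \emph{lower} semicontinuous (it is a supremum of continuous functionals $R\mapsto\bb E_R[Q_{T}(y,z)\wedge M]$ letting $M\uparrow\infty$, via monotone convergence and the fact that $Q_T$ is a countable sum of indicator-type increments), but $\hat Q$ need not be continuous because mass in the flow can escape to infinity along edges. The clean way around this is to use the version of the contraction principle for a map that is a pointwise limit of continuous maps, or more directly: prove the LDP lower bound for $(\mu_T,Q_T)$ from the lower bound for $\mc R_T$ together with lower semicontinuity of $\Psi$ and goodness of $H$, and prove the upper bound by first truncating. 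Concretely, for the upper bound one truncates the flow to a finite edge set $E'\subset E$, on which $Q\mapsto(Q(y,z))_{(y,z)\in E'}$ is a genuine continuous function of $\mc R_T$; exponential tightness of $\mu_T$ (which holds under the Donsker--Varadhan condition, cf.\ \cite{DV4}) combined with the stochastic domination of Remark \ref{SD} (each $TQ_T(y,z)$ dominated by an independent Poisson of mean $Tr(y,z)$, giving a uniform exponential tail $\bb P_x(Q_T(y,z)>a)\le e^{-Ta\log(a/(er(y,z)))}$ roughly) yields exponential tightness of $(\mu_T,Q_T)$ in $\mc P(V)\times[0,+\infty]^B$; the upper bound on compacts then follows by the standard projective-limit / finite-dimensional approximation argument, identifying the rate as $\tilde I$.

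For the identification \eqref{samarcanda}, I would argue as follows. If $Q\notin\bb R_+^E$, i.e.\ $Q(y,z)>0$ for some $(y,z)$ with $r(y,z)=0$, then any stationary $R$ with $\hat Q(R)=Q$ must put positive expected flow on a bond that the chain never traverses; but $H(R)<+\infty$ forces $R$ to be absolutely continuous (locally in time) with respect to the reference chain, which makes such a flow impossible, so the infimum in \eqref{samarcanda1} is over the empty set and $\tilde I(\mu,Q)=+\infty$. If $Q\in L^1_+(E)$, then on the one hand, reversing the direct construction used to prove the lower bound in Theorem \ref{LDP:misura+flusso} (the tilted-chain/perturbation argument, resp.\ the explicit construction for $\div Q=0$) one exhibits, for any $(\mu,Q)$ with $I(\mu,Q)<+\infty$, a stationary $R$ — the law of the tilted stationary chain, suitably periodized — with $\hat\mu(R)=\mu$, $\hat Q(R)=Q$ and $H(R)=I(\mu,Q)$; this gives $\tilde I\le I$. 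On the other hand, for \emph{any} $R\in\mc M_S$ with $\hat\mu(R)=\mu$, $\hat Q(R)=Q\in L^1_+(E)$, one shows $H(R)\ge I(\mu,Q)$ by a lower-bound-on-entropy computation: decomposing $H(R)$ via the chain rule over jump events and using Jensen / the variational characterization of $\Phi$ as $\Phi(q,p)=\sup_{\lambda}\{\lambda q-p(e^\lambda-1)\}$, one bounds $H(R)$ below by $\sum_{(y,z)}\Phi(\hat Q(R)(y,z),\hat\mu(R)(y)r(y,z))=I(\mu,Q)$ when $\div Q=0$ (stationarity of $R$ forces $\div\hat Q(R)=0$ automatically), and $+\infty$ is consistent otherwise. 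Combining, $\tilde I=I$ on $L^1_+(E)$.

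Part (ii) is then essentially a comparison of two rate functions that are both known explicitly off $L^1_+(E)$ only through $\tilde I$. Under Condition \ref{t:ccomp} the extra input is exponential tightness of $Q_T$ in $L^1_+(E)$ with the bounded weak$^*$ topology — this is precisely what Theorem \ref{LDP:misura+flusso} supplies. Since the identity map from $\mc P(V)\times L^1_+(E)$ (bounded weak$^*$) into $\mc P(V)\times[0,+\infty]^B$ (product) is continuous, and since under Condition \ref{t:ccomp} the family $\{(\mu_T,Q_T)\}$ is exponentially tight in the finer space, the inverse-contraction (or rather: transport of an LDP along a continuous injection with exponential tightness in the domain) forces the rate function in the coarser space to be $+\infty$ outside the image of $\{\tilde I<\infty\}$, which sits inside $L^1_+(E)$; on $L^1_+(E)$ it already equals $I$ by \eqref{samarcanda}. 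This yields \eqref{fatto!}. I expect the main obstacle to be the careful handling of the non-continuity of $\hat Q$ in the product topology — making the truncation/exponential-tightness argument airtight so that no flow mass is lost in the limit — together with the entropy lower bound $H(R)\ge I(\hat\mu(R),\hat Q(R))$, which requires controlling infinitely many jump types at once via a robust variational argument rather than a term-by-term estimate.
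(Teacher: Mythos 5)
Your overall strategy --- project the Donsker--Varadhan level-3 LDP through $R\mapsto(\hat\mu(R),\hat Q(R))$ and then identify the rate function --- is exactly the paper's plan, and Proposition \ref{cervo1pezzo} is precisely the pathwise identity $\Psi(\mc R_{T,X})=(\mu_T(X),Q_T(X^T))$ you invoke. However, there are three genuine gaps in the way you propose to carry it out.

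\emph{(1) Discontinuity of $\hat Q$ is not fixed by restricting to finitely many edges.} You correctly note that $\hat Q$ is only lower semicontinuous, but you then claim that ``truncating the flow to a finite edge set $E'$'' produces a continuous function of $\mc R_T$. This is false: for a single edge $(y,z)$ the map $R\mapsto\hat Q(R)(y,z)=\bb E_R[Q_1(y,z)]$ is \emph{not} continuous in the weak topology on $\mc M_S$, because $Q_1(y,z)$ is an unbounded functional of the path (the number of jumps in a unit window has unbounded support). The paper's construction of $F^\epsilon_{y,z}$ addresses \emph{two} obstructions simultaneously: it damps the boundary contribution at $s\in\{0\}\cup\{1\}$ with the cutoff $\varphi_\epsilon$ (to make $F^\epsilon_{y,z}$ continuous on Skorohod space) and it caps the jump count at $\epsilon^{-1}$ (to make it bounded, hence the expectation weakly continuous). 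Only then does the exponentially-good-approximation machinery (\cite[Thm.~4.2.23]{DZ}, formulas \eqref{dz1} and \eqref{dz2}) apply; your proposed projective-limit argument does not get off the ground because the finite-edge marginal of $\hat Q$ is already discontinuous. Likewise your lower-bound sketch (``from the lower bound for $\mc R_T$ together with lower semicontinuity of $\Psi$'') does not work directly: $\Psi^{-1}(\mc A)$ need not be open when $\Psi$ is merely l.s.c., so the lower bound cannot be pushed forward without the approximation step.

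\emph{(2) Existence of an exact minimizer.} For $\tilde I\le I$ you assert that, for any $(\mu,Q)$ with $I(\mu,Q)<+\infty$, the tilted stationary chain gives a \emph{single} $R$ with $\hat\mu(R)=\mu$, $\hat Q(R)=Q$ and $H(R)=I(\mu,Q)$. This is unjustified: when $(\supp\mu,E(Q))$ is infinite the tilted rates $Q(y,z)/\mu(y)$ may produce an explosive or otherwise pathological chain, and the infimum in \eqref{samarcanda1} need not be attained because $\hat Q$ is not continuous (preimages of points are not closed). The paper sidesteps this by approximating $(\mu,Q)$ with finitely supported $(\mu_n,Q_n)\in\mc S$ (Proposition \ref{t:dt}), using the exact identity $H(\bb P[\mu_n,Q_n])=I(\mu_n,Q_n)$ only in the finite case, and then invoking lower semicontinuity of $\tilde I$ (which is available once the LDP with good rate $\tilde I$ is established). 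Appendix~\ref{caldaia} goes further and constructs an explicit minimizing sequence, but still not a minimizer.

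\emph{(3) Part (ii) is not an exponential-tightness consequence.} Your argument for \eqref{fatto!} is that exponential tightness of $Q_T$ in $L^1_+(E)$ ``forces the rate function in the coarser space to be $+\infty$ outside $L^1_+(E)$.'' This is circular or at best incomplete: a neighbourhood of $Q_0\in\bb R_+^E\setminus L^1_+(E)$ in the product topology constrains only finitely many coordinates, so it always contains summable flows with finite rate, and the LDP upper bound over such sets says nothing about $\tilde I(\mu,Q_0)$ itself. Exponential tightness in the finer topology allows one to \emph{lift} the LDP, but it does not a priori force the coarse-topology rate function to blow up off $L^1_+(E)$. The paper's argument is a direct entropy inequality: under Condition \ref{t:ccomp}, Claim \ref{schnell} gives $\|\hat Q(R)\|\le H(R)(1+e/\sigma)+C\,e/\sigma$ for every $R\in\mc M_S$, so $\hat Q(R)\notin L^1_+(E)$ immediately forces $H(R)=+\infty$ and hence $\tilde I(\mu,Q)=+\infty$. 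You need an estimate of this type; exponential tightness alone does not deliver it.

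Your treatment of the remaining identifications (the variational lower bound $H(R)\ge I(\mu,Q)$ via tilted-martingale test functions and \eqref{vedoluce}, and the case $Q\notin\bb R_+^E$) is in the right spirit, though you would still need to supply the cut-off functions $\varphi_{n,\ell}\in\mc B(\mc F^0_T)$ and the separate treatment of $Q(y,z)>0$, $\mu(y)=0$ that the paper carries out explicitly, and to replace ``$\div\hat Q(R)=0$ automatically'' with the precise balance identity of Lemma \ref{l1} (which only gives divergence-freeness once $\hat Q(R)\in L^1_+(E)$).
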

Since   Condition \ref{t:ccomp} implies  the
Donsker--Varadhan condition, Theorem \ref{LDPteo2}
 under  Condition \ref{t:ccomp}  implies the
variational characterization
$$ I (\mu,Q) = \inf \Big\{  H(R):\, R \in \mc M_S\,,\;
 \hat \mu(R)=\mu\,,\;
 \hat Q(R)= Q  \Big\}, \; (\mu,Q)
\in\mc P (V) \times L^1_+(E)\,.$$

In addition, note that \eqref{samarcanda} does not cover the case $Q
\in [0,+\infty)^E \setminus L_+^1(E)$.

\subsection{Outline}
The rest of the paper is devoted to the proofs of Theorems
\ref{LDP:misura+flusso} and \ref{LDPteo2}, and of Proposition \ref{gauchito}. Sections \ref{s:prep} and
\ref{s:geometria} contain preliminary results and the proof of
Proposition~\ref{gauchito}. Then in Section
\ref{s:diretto} we give a direct proof of Theorem
\ref{LDP:misura+flusso}. For this proof it is necessary to add the
condition that the graph $(V,E)$ is locally finite.

In Sections \ref{s:proiezione}, \ref{dim_cervo2} and \ref{dim_cervo3}
we remove the above condition and prove both Theorems
\ref{LDP:misura+flusso} and \ref{LDPteo2} by projection from the large
deviations principle for the empirical process proven by Donsker and
Varadhan in \cite{DV4}--(IV).
We discuss the details only for the Donsker-Varadhan type compactness
conditions. For this reason, we added item (iii) as a separate
requirement in the hypercontractivity type Condition~\ref{t:ccompls}.
By using similar arguments to the ones here presented, it should be
possible to remove it from Theorem~\ref{LDP:misura+flusso} and prove
the first statement in Theorem~\ref{LDPteo2} by assuming only items (i) and (ii) in
Condition~\ref{t:ccompls}.

Finally, in Section \ref{s:BD} we
discuss some examples from birth and death processes and compare the
different compactness conditions.

\section{Exponential estimates}\label{s:prep}
In this section we collect some preliminary results that will enter
in the proof of Theorems \ref{LDP:misura+flusso} and \ref{LDPteo2}.
Between other, we prove the exponential tightness in $L^1_+(E)$ of
the empirical flow when at least one between Conditions
\ref{t:ccomp}  and  \ref{t:ccompls} holds.

\subsection{Exponential local martingales}\label{exp_supm}
We start by comparing our Markov chain with a perturbed one. Let
$\hat{\xi}$ be a continuous time   Markov chain on $V$ with jump
rates $\hat r (y,z)$, $y\not=z$ in $V$. We assume that $\hat r(y):=
\sum _{z \in V} \hat r(y,z)< +\infty$ for all $y \in V$, thus
implying that the   Markov chain $\hat \xi$  is well defined at cost
to add a coffin state $\partial$ to the state space in case of
explosion \cite[Ch. 2]{N}.  We  write  $ \hat{ \bb P} _x$ for the
law on $D( \bb R_+, V \cup \{\partial\} )$ of the above Markov chain
$\hat \xi$ starting at $x\in V$.
We denote by $\rho_T$ the map  $\rho_T:D(\bb R_+, V \cup \{\partial
\}) \to D( [0,T], V\cup \{ \partial \} )$ given by restriction of
the path to the time interval $[0,T]$. We now assume that $\hat
r(y,z)=0$ if $(y,z) \not\in E$. Then, restricting the probability
measures $\bb P_x \circ \rho _T^{-1}$ and $ \hat {\bb P }_x \circ
\rho_T^{-1}$
to the set $D([0,T], V)$ (no explosion takes place in the interval
$[0,T]$), we obtain two reciprocally absolutely continuous measures
with Radon--Nykodim derivative  \begin{equation}\label{RN} \frac {d
\hat {\bb  P}  _{x} \circ \rho_T^{-1}   }{d\bb P_{x} \circ
\rho_T^{-1} } \Big|_{D([0,T], V)} =  \exp \left\{ - T\langle \mu_T,
\hat r -r\rangle\right\} \prod_{ (y,z) \in E} \left[\frac{ \hat
r(y,z)}{r(y,z)}\right] ^{T Q_T(y,z)}  \,.
\end{equation}
This formula can be checked very easily. Indeed, calling $\tau_1(X)<
\tau_2(X) < \tau_{N(X)}  (X)$ the jump times of the path $X$ in
$[0,T]$ (below $N(X)<+\infty$ almost surely) we have
\begin{multline*}
\bb P_x \circ \rho_T^{-1} \Big(N(X)= n\,, \; X(\tau_i)=x_i,
\; \tau_i \in (t_i, t_i+dt_i)\; \forall
i :1\leq i \leq n\Big)
\\ =
\Big[\prod _{i=0}^{n-1} e^{ -r(x_i) (t_{i+1}-t_i) } r(x_i,
x_{i+1})\Big] e^{-r(x_n) (T-t_n)}dt_1 \cdots dt_n\,,
 \end{multline*}
where $t_0:=0$ and $x_0:=x$, $0\leq t_1<t_2 < \cdots < t_n\leq T$,
$n=0,1,2,\dots$. Since a similar formula holds also for the law
$\hat {\bb  P}  _{x} \circ \rho_T^{-1}$, one gets \eqref{RN}.

\smallskip
As immediate consequence of the Radon--Nykodim derivative \eqref{RN}
we get the following result:
\begin{lemma}
  \label{t:em2}
  Let $F\colon E\to \bb R$ be such that $r^F(y) := \sum_z r(y,z) e^{F(y,z)}
  <+\infty$ for any $y\in V$.
  For $t\ge 0$ define $\bb M^F_t: D(\bb R_+, V) \to (0,+\infty)$ as
  \begin{equation}
    \label{expm2}
    \bb M^F_t :=
    \exp\Big\{ t \big[  \langle Q_t, F\rangle - \langle\mu_t, r^F-r\rangle\big]
    \Big\}
  \end{equation} where $\langle Q_t,F  \rangle = \sum _{(y,z) \in E} Q_t(y,z) F(y,z)$.
Then    for each $x\in V$ and $t\in \bb R_+$ it holds
  $\bb E_x \big( \bb M^F_t \big) \le 1$.
\end{lemma}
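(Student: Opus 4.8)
The plan is to recognize $\bb M^F_t$ as a Radon--Nikodym derivative of a (possibly defective) perturbed law against $\bb P_x$, after which the bound $\bb E_x\big(\bb M^F_t\big)\le 1$ is simply the statement that a probability measure restricted to a subset has total mass at most one. Concretely, I would apply the perturbation construction leading to \eqref{RN} with the choice $\hat r(y,z):=r(y,z)\,e^{F(y,z)}$ for $(y,z)\in E$ and $\hat r(y,z):=0$ for $(y,z)\in B\setminus E$. By the hypothesis on $F$ one has $\hat r(y)=\sum_z r(y,z)e^{F(y,z)}=r^F(y)<+\infty$ for every $y\in V$, so the perturbed chain $\hat\xi$ is well defined on $D(\bb R_+, V\cup\{\partial\})$ (with $\partial$ the coffin state accounting for a possible explosion), and the requirement $\hat r(y,z)=0$ off $E$ is met. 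Hence \eqref{RN} is available.

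Next I would simplify the right-hand side of \eqref{RN} for this choice. On $E$ one has $\hat r(y,z)/r(y,z)=e^{F(y,z)}$, and $\bb P_x$-a.s.\ the flow $Q_T$ is supported on $E$, so the product in \eqref{RN} equals $\exp\{T\langle Q_T,F\rangle\}$; since moreover $\hat r-r=r^F-r$, the exponential prefactor equals $\exp\{-T\langle\mu_T,r^F-r\rangle\}$. Combining these, and noting that all series are finite sums along a fixed trajectory (the empirical measure $\mu_T$ and flow $Q_T$ of a single path are finitely supported, while $r(y)$ and $r^F(y)$ are finite at every $y$), we obtain
\[
\frac{d\hat{\bb P}_x\circ\rho_T^{-1}}{d\bb P_x\circ\rho_T^{-1}}\Big|_{D([0,T],V)}
= \exp\big\{T\big[\langle Q_T,F\rangle-\langle\mu_T,r^F-r\rangle\big]\big\}
= \bb M^F_T .
\]

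Finally, for fixed $t\ge 0$ I would conclude as follows. Since the original chain is non-explosive by (A2), $\bb P_x$-a.s.\ the path stays in $D(\bb R_+, V)$, so $\rho_t(X)\in D([0,t],V)$; moreover $\bb M^F_t$ depends on the path only through $\rho_t(X)$, so the expectation reduces to an integral over $D([0,t],V)$, and by the displayed identity with $T=t$,
\[
\bb E_x\big(\bb M^F_t\big)
= \int_{D([0,t],V)} \frac{d\hat{\bb P}_x\circ\rho_t^{-1}}{d\bb P_x\circ\rho_t^{-1}}\, d\big(\bb P_x\circ\rho_t^{-1}\big)
= \hat{\bb P}_x\big(\rho_t(X)\in D([0,t],V)\big)\le 1 ,
\]
the inequality being exactly that the restriction of the probability $\hat{\bb P}_x\circ\rho_t^{-1}$ to the subset $D([0,t],V)$ of paths that have not reached $\partial$ by time $t$ has mass $\le 1$. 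I do not expect any real obstacle: the lemma is an immediate consequence of \eqref{RN}, as the authors note; the only points needing care are that the perturbed chain must be allowed to explode (this is precisely what turns the identity into the inequality $\le 1$) and that the empirical quantities of a single trajectory are finitely supported, so that $\langle Q_t,F\rangle$ and $\langle\mu_t,r^F-r\rangle$ are well defined.
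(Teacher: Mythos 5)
Your proposal is correct and follows precisely the paper's own argument: apply \eqref{RN} with $\hat r(y,z)=r(y,z)e^{F(y,z)}$, identify the Radon--Nikodym derivative with $\bb M^F_t$, and conclude $\bb E_x(\bb M^F_t)=\hat{\bb P}_x\big(D([0,t];V)\big)\le 1$. You simply spell out the intermediate simplifications (finiteness of the path-wise sums, non-explosion of $\xi$) that the paper leaves implicit.
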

\begin{proof} By \eqref{RN} ($\hat r(y,z):=r(y,z) e^{F(y,z)}$),
$\bb E_x( \bb M_t ^F)= \hat{\bb P} _x \bigl(  D([0,t]; V )\bigr)\leq
1$.
\end{proof}

\begin{remark}
It is simple to check that the process $\bb M^F$ is a positive local
martingale and a supermartingale  with respect to   $\bb P_x$, $x\in
V$.
\end{remark}

\begin{remark}\label{willy}
Fixed $(y,z) \in E$, taking  in Lemma \ref{t:em2}  $F: = \pm \lambda
\delta _{y,z}$ with $\lambda >0 $ and applying Chebyshev inequality,
one gets for $\delta >0$ that the events $\{ Q_t(y,z) >\mu_t (y)
r(y,z) (e^\lambda-1) /\lambda +\delta\}$ and $\{ Q_t(y,z) <\mu_t (y)
r(y,z) (1-e^{-\lambda}) /\lambda -\delta\}$ have $\bb
P_x$--probability bounded by $e^{-t \delta\lambda}$. Using that $(e^{\pm \lambda} -1)/\lambda=\pm1+o(1)$  and  since
$\mu_t(y) \to \pi (y)$ as $t \to +\infty$   $\bb P_x$--a.s. by the ergodic theorem
\eqref{ergodico}, taking the limit $t \to+\infty$ and afterwards
taking $\delta, \lambda$ arbitrarily small, one recovers the LLN of
$Q_t(y,z)$
 towards $\pi(y)r(y,z)$ discussed in Section \ref{s:emef}.
\end{remark}

The next statement is deduced from Lemma \ref{t:em2}  by choosing
there $F(y,z) = \log [ u(z)/u(y)]$, $(y,z)\in E$ for some $u\colon V
\to (0,+\infty)$.

\begin{lemma}
  \label{t:em1}
  Let $u\colon V \to (0,+\infty)$ be such that $\sum_{z} r(y,z) u(z)
  <+\infty$ for any $y\in V$.  For $t\ge 0$ define $ \mc M^u_t   : D(\bb R_+, V) \to (0,+\infty)$ as
  \begin{equation}
    \label{expm1}
    \mc M^u_t :=  \frac{u(X_t)}{u(X_0)}
    \exp\Big\{ t \, \Big\langle \mu_t, - \frac{L u}{u}
    \Big\rangle\Big\}.
  \end{equation}
 Then for each $x\in V$
  and $t\in \bb R_+$ it holds $\bb E_x \big(\mc M^u_t \big) \le 1$.
\end{lemma}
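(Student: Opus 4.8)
The statement follows by specializing Lemma~\ref{t:em2} to the function $F(y,z):=\log\bigl[u(z)/u(y)\bigr]$, $(y,z)\in E$. First I would check that this $F$ is admissible for Lemma~\ref{t:em2}: by definition $r^F(y)=\sum_z r(y,z)e^{F(y,z)}=\sum_z r(y,z)\,u(z)/u(y)=\tfrac{1}{u(y)}\sum_z r(y,z)u(z)$, which is finite for every $y$ precisely by the hypothesis $\sum_z r(y,z)u(z)<+\infty$. So Lemma~\ref{t:em2} applies and gives $\bb E_x\bigl(\bb M^F_t\bigr)\le 1$ for all $x\in V$, $t\in\bb R_+$.

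The remaining task is purely algebraic: to verify that with this choice of $F$ the martingale $\bb M^F_t$ of \eqref{expm2} coincides with $\mc M^u_t$ of \eqref{expm1}. For the flow term, using $Q_t(y,z)=0$ off $E$ together with the elementary telescoping identity for a path $X$ with jump times $\tau_1<\dots<\tau_{N}$ in $[0,t]$,
\begin{equation*}
  \langle Q_t,F\rangle
  = \frac{1}{t}\sum_{i=1}^{N}\bigl[\log u(X_{\tau_i})-\log u(X_{\tau_i^-})\bigr]
  = \frac{1}{t}\bigl[\log u(X_t)-\log u(X_0)\bigr],
\end{equation*}
so that $t\langle Q_t,F\rangle=\log\bigl(u(X_t)/u(X_0)\bigr)$ and hence $\exp\{t\langle Q_t,F\rangle\}=u(X_t)/u(X_0)$. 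For the measure term, $r^F(y)-r(y)=\tfrac{1}{u(y)}\sum_z r(y,z)u(z)-\sum_z r(y,z)=\tfrac{1}{u(y)}\sum_z r(y,z)\bigl[u(z)-u(y)\bigr]=Lu(y)/u(y)$ by \eqref{Lf}, so $-\langle\mu_t,r^F-r\rangle=\langle\mu_t,-Lu/u\rangle$. Combining the two computations yields exactly $\bb M^F_t=\mc M^u_t$, and therefore $\bb E_x\bigl(\mc M^u_t\bigr)=\bb E_x\bigl(\bb M^F_t\bigr)\le 1$.

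There is no real obstacle here; the only point requiring a line of care is the telescoping identity, where one must note that for $\bb P_x$-a.a.\ paths $N<+\infty$ on $[0,t]$ (the chain is non-explosive by (A2)), and between consecutive jump times $X$ is constant so $\log u(X_s)$ contributes nothing except at the jumps — which is precisely what the sum $\sum_{0\le s\le t}\delta_y(X_{s^-})\delta_z(X_s)$ in \eqref{montecarlo} records. One should also remark that $F$ may fail to vanish at infinity on $E$, but Lemma~\ref{t:em2} imposes no such requirement: only the pointwise finiteness of $r^F$ is needed, which we have verified.
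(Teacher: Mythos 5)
Your proof is correct and follows exactly the route the paper takes: the paper states that Lemma~\ref{t:em1} ``is deduced from the previous lemma by choosing there $F(y,z)=\log[u(z)/u(y)]$'' and leaves the verification implicit, while you have simply written out the two algebraic checks (the telescoping of $\langle Q_t,F\rangle$ and the identity $r^F-r=Lu/u$) that make this deduction rigorous. Nothing to add.
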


\subsection{Exponential tightness}
\label{s:et}

We shall prove separately the exponential tightness of the
empirical measure and of the empirical flow. We first discuss the
case in which Condition~\ref{t:ccomp} holds. Then the proof of the
exponential tightness of the empirical measure is essentially a
rewriting of the argument in \cite{DV4} in the present setting. On
the other hand,  the proof of the exponential tightness of the
empirical flow depends on the extra assumption $\sigma>0$ in item
(vi) of Condition~\ref{t:ccomp}.

\begin{lemma}
  \label{t:letem}
  Assume Condition~\ref{t:ccomp} to hold and
  let the function $v$ and the constants $c,C_x,C,\sigma$ be as in Condition
  \ref{t:ccomp}.    Then for each $x\in V$ it holds
  \begin{equation}\label{sirena}
    \bb E_x \Big( e^{ T \langle\mu_T, v\rangle } \Big) \le
    \frac {C_x}{c} \,, \qquad
\bb E_x    \Big( e^{ T\sigma  \langle\mu_T, r\rangle } \Big) \le
    e^{TC}\frac {C_x}{c}\,.
  \end{equation}
\end{lemma}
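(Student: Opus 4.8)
The plan is to deduce both estimates in \eqref{sirena} from the supermartingale bound in Lemma~\ref{t:em1} applied to the functions $u_n$ provided by Condition~\ref{t:ccomp}. Recall that $\bb E_x(\mc M^{u_n}_T)\le 1$, where
$$
\mc M^{u_n}_T=\frac{u_n(X_T)}{u_n(X_0)}\exp\Big\{T\,\big\langle\mu_T,-\tfrac{Lu_n}{u_n}\big\rangle\Big\}
=\frac{u_n(X_T)}{u_n(X_0)}\,e^{T\langle\mu_T,v_n\rangle}\,,
$$
using the definition $v_n=-Lu_n/u_n$ from item (iv). Here the hypotheses of Lemma~\ref{t:em1} are met because item (i) guarantees $\sum_z r(y,z)u_n(z)<+\infty$. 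Now by item (ii) we have $u_n(X_T)\ge c$ and by item (iii) we have $u_n(X_0)=u_n(x)\le C_x$, so
$$
\frac{c}{C_x}\,\bb E_x\Big(e^{T\langle\mu_T,v_n\rangle}\Big)\le \bb E_x\Big(\mc M^{u_n}_T\Big)\le 1\,,
$$
i.e.\ $\bb E_x(e^{T\langle\mu_T,v_n\rangle})\le C_x/c$ for every $n$. The first inequality in \eqref{sirena} then follows by taking $n\to\infty$: by item (iv), $v_n\to v$ pointwise, hence $\langle\mu_T,v_n\rangle\to\langle\mu_T,v\rangle$ for each fixed realization of $\mu_T$ (which is supported on finitely many states, since the chain is non-explosive on $[0,T]$), and Fatou's lemma gives $\bb E_x(e^{T\langle\mu_T,v\rangle})\le\varliminf_n\bb E_x(e^{T\langle\mu_T,v_n\rangle})\le C_x/c$.

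For the second inequality I would use item (vi), which states $v\ge\sigma r-C$ with $\sigma>0$, so that $\sigma\langle\mu_T,r\rangle\le\langle\mu_T,v\rangle+C$ (here $\langle\mu_T,r\rangle$ is automatically finite because $\mu_T$ has finite support). Exponentiating and taking expectations,
$$
\bb E_x\Big(e^{T\sigma\langle\mu_T,r\rangle}\Big)\le e^{TC}\,\bb E_x\Big(e^{T\langle\mu_T,v\rangle}\Big)\le e^{TC}\,\frac{C_x}{c}\,,
$$
which is exactly the claimed bound.

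The only delicate point is the passage to the limit $n\to\infty$, i.e.\ justifying that $\langle\mu_T,v\rangle$ is well defined and that Fatou applies; one must be slightly careful because $v_n$ (and $v$) may take negative values and need not be bounded. However, since for every path $X$ the empirical measure $\mu_T$ is a finitely supported probability measure, $\langle\mu_T,v_n\rangle=\sum_{y}\mu_T(y)v_n(y)$ is a finite sum converging to $\langle\mu_T,v\rangle$, so $e^{T\langle\mu_T,v_n\rangle}\to e^{T\langle\mu_T,v\rangle}$ pointwise on $D(\bb R_+;V)$ and Fatou's lemma applies with no integrability obstruction. An alternative to Fatou that avoids even this minor issue is to argue that, since $v$ has compact (finite) level sets by item (v) and $v\ge\sigma r-C\ge-C$ is bounded below, one can also first obtain a bound with $v$ replaced by $v\wedge\ell$ and then let $\ell\to\infty$ by monotone convergence; but the Fatou argument above is the cleanest. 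Thus I expect no real obstacle: the lemma is essentially a direct corollary of Lemma~\ref{t:em1} together with the bookkeeping of items (ii), (iii), (iv) and (vi) of Condition~\ref{t:ccomp}.
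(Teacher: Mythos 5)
Your proof is correct and follows essentially the same route as the paper: apply Lemma~\ref{t:em1} with $u=u_n$, use items (ii)--(iii) of Condition~\ref{t:ccomp} to bound the ratio $u_n(X_T)/u_n(X_0)$ from below by $c/C_x$, pass to the limit in $n$ via Fatou using item (iv), and then deduce the second estimate from the first using item (vi). The paper states the Fatou step more tersely; your extra remark that $\mu_T$ has finite support (so $\langle\mu_T,v_n\rangle\to\langle\mu_T,v\rangle$ is a finite-sum limit) is a reasonable clarification of the same step.
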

\begin{proof} The second bound in \eqref{sirena} follows
trivially from the first one and item (vi) in Condition
\ref{t:ccomp}. To prove the first bound,
  let $u_n$ be the sequence of functions on $V$ provided by
  Condition~\ref{t:ccomp} and recall that  $v_n =  - Lu_n / u_n$.
  In view of the pointwise convergence of $v_n$ to $v$ and Fatou lemma
  \begin{equation*}
    \bb E_x \Big( e^{ T \langle\mu_T, v\rangle } \Big)
    \le \varliminf_n \:
    \bb E_x \Big( e^{ T \langle\mu_T, v_n\rangle } \Big)
    = \varliminf_n \: \bb E_x
    \Big( \exp\Big\{ T \,\Big\langle \mu_T, -\frac{L
      u_n}{u_n}\Big\rangle \Big\} \Big)
    \le \frac {C_x}{c}
  \end{equation*}
  where the last step follows from Lemma~\ref{t:em1} and
  items (ii)--(iii) in Condition~\ref{t:ccomp}.
\end{proof}

The following  provides the exponential tightness of the empirical
measure and the empirical flow.
\begin{proposition}
  \label{t:etem}
  Assume Condition~\ref{t:ccomp}.
  For each $x\in V$ there exists a sequence $\{\mc K_\ell\}$ of
  compacts in $\mc P(V)$ and a real sequence $A _\ell\uparrow +\infty$
  such that for any $\ell\in\bb N$
  \begin{align}
   &  \varlimsup_{T\to +\infty} \; \frac 1T
    \log \bb  P_x \big( \mu_T \not\in \mc K_\ell \big) \le -\ell\,,
    \label{arcobaleno1}\\
   &  \varlimsup_{T\to +\infty} \; \frac 1T
    \log \bb  P_x \big( \| Q_T\| >  A_\ell \big) \le -\ell\,.
\label{arcobaleno2}
      \end{align}
      In particular, the empirical measure and
      flow are exponentially tight.
\end{proposition}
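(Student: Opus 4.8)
The plan is to exploit the exponential bounds of Lemma 3.6 together with the standard Donsker--Varadhan mechanism. First I would deal with the empirical measure. By item (v) of Condition~2.4 the level sets $\{v \le \ell'\}$ are finite, so for each $\ell' \in \bb R$ the set $\mc K = \mc K(\ell',\delta)$ of probabilities $\mu$ with $\langle \mu, \id_{\{v \le \ell'\}}\rangle \ge 1 - \delta$ is tight, hence relatively compact in $\mc P(V)$. The complement of such a set is contained in $\{\langle \mu_T, v\rangle \text{ large}\}$ once we use that $v$ is bounded below on the finite set $\{v \le \ell'\}$ (after subtracting a constant we may as well assume $v \ge 0$, since $v \ge \sigma r - C \ge -C$). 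Concretely, writing $v \ge (\ell'+1)\id_{\{v > \ell'\}} - C$, one gets $\langle \mu_T, v\rangle \ge (\ell'+1)\,\mu_T(\{v>\ell'\}) - C$, so $\mu_T \notin \mc K(\ell',\delta)$ forces $\langle \mu_T,v\rangle \ge (\ell'+1)\delta - C$. Then Chebyshev together with the first bound in \eqref{sirena},
\begin{equation*}
  \bb P_x\big(\langle \mu_T, v\rangle \ge a\big) \le e^{-Ta}\,\bb E_x\big(e^{T\langle\mu_T,v\rangle}\big) \le e^{-Ta}\,\frac{C_x}{c}\,,
\end{equation*}
yields $\varlimsup_T \frac1T\log \bb P_x(\mu_T \notin \mc K(\ell',\delta)) \le -(\ell'+1)\delta + C$, and choosing $\ell' = \ell'(\ell)$ large (with $\delta$ fixed, or letting $\ell' = \ell/\delta + C/\delta$) makes the right-hand side $\le -\ell$. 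Taking $\mc K_\ell := \upbar{\mc K(\ell'(\ell),\delta)}$ gives \eqref{arcobaleno1}.

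For the empirical flow I would use the second bound in \eqref{sirena}, $\bb E_x(e^{T\sigma\langle\mu_T,r\rangle}) \le e^{TC}C_x/c$, and control $\|Q_T\|$ by $\langle \mu_T, r\rangle$ via a comparison with the martingale \eqref{dacitare} or, more cheaply, via the stochastic domination of Remark~2.1. Indeed $\|Q_T\| = \sum_{(y,z)\in E} Q_T(y,z)$ and $TQ_T(y,z) = M_T(y,z) + \int_0^T ds\,\delta_y(X_s)r(y,z)$, so after summing over $z$ (using that all terms are nonnegative and Fubini/monotone convergence applies $\bb P_x$-a.s. since the chain is non-explosive)
\begin{equation*}
  \|Q_T\| = \frac1T\sum_{(y,z)\in E} M_T(y,z) + \langle \mu_T, r\rangle\,.
\end{equation*}
The martingale part should be handled with an exponential supermartingale estimate: taking $F \equiv \lambda$ (a bounded function on $E$, constant) is not summable-safe, so instead I would take $F = \lambda\,\id_{E_n}$ for a finite $E_n \uparrow E$ in Lemma~3.3, giving $\bb E_x\exp\{\lambda\sum_{(y,z)\in E_n} TQ_T(y,z) - T\langle\mu_T, r^F - r\rangle\} \le 1$ with $r^F - r = \sum_{z:(y,z)\in E_n} r(y,z)(e^\lambda-1)$, hence $\le (e^\lambda - 1)r(y)$; letting $n\to\infty$ by monotone convergence one obtains
\begin{equation*}
  \bb E_x\Big(\exp\big\{\lambda T\|Q_T\| - T(e^\lambda-1)\langle\mu_T,r\rangle\big\}\Big) \le 1\,.
\end{equation*}
Now choose $\lambda>0$ small enough that $e^\lambda - 1 \le \sigma$ (so $(e^\lambda-1)\langle\mu_T,r\rangle \le \sigma\langle\mu_T,r\rangle$ pointwise), apply Cauchy--Schwarz to split the expectation of $e^{\lambda T\|Q_T\|}$ into the product of $\big(\bb E_x e^{2\lambda T\|Q_T\| - 2(e^\lambda-1)T\langle\mu_T,r\rangle}\big)^{1/2} \le 1$ (rescaling $\lambda$) and $\big(\bb E_x e^{2(e^{\lambda}-1)T\langle\mu_T,r\rangle}\big)^{1/2}$, the latter bounded via \eqref{sirena} provided $2(e^\lambda-1) \le \sigma$. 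This gives $\bb E_x(e^{\lambda' T\|Q_T\|}) \le e^{TC'}(C_x/c)^{1/2}$ for some $\lambda',C'>0$, and then Chebyshev: $\bb P_x(\|Q_T\| > A_\ell) \le e^{-\lambda' T A_\ell}e^{TC'}(C_x/c)^{1/2}$, so $\varlimsup_T\frac1T\log\bb P_x(\|Q_T\|>A_\ell) \le -\lambda' A_\ell + C'$. Taking $A_\ell := (\ell + C')/\lambda'$ (which indeed $\uparrow +\infty$) gives \eqref{arcobaleno2}. Finally, exponential tightness on the product space $\mc P(V)\times L^1_+(E)$ follows because $\{Q \in L^1_+(E): \|Q\| \le A_\ell\} = B_{A_\ell}\cap L^1_+(E)$ is compact in the bounded weak* topology (as recalled in Section~2.4), so $\mc K_\ell \times (B_{A_\ell}\cap L^1_+(E))$ is the required compact exhaustion.

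The main obstacle I anticipate is the flow estimate: one cannot directly plug a constant $F$ into Lemma~3.3 because constants need not be in $C_0(E)$ and, more importantly, $r^F - r$ need not be controlled by $r$ unless one is careful, so the finite-truncation-and-monotone-convergence argument together with the Cauchy--Schwarz splitting to reduce to \eqref{sirena} is the delicate part; the choice of $\lambda$ small (so that $e^\lambda - 1$ stays below the available $\sigma$) is exactly where the strict positivity $\sigma > 0$ in item (vi) of Condition~2.4 is used, as flagged in the text. The empirical-measure part, by contrast, is routine once \eqref{sirena} and the finiteness of level sets of $v$ are in hand. When instead Condition~2.5 holds, the same scheme runs with the hypercontractivity version of the Donsker--Varadhan bound replacing \eqref{sirena} for the measure, and item (ii) of Condition~2.5 (the exponential moment of $r$ under $\pi$) replacing the bound on $\bb E_x(e^{T\sigma\langle\mu_T,r\rangle})$ for the flow; this case I would treat in a separate lemma but along identical lines.
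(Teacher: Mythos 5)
Your mechanism is the right one, but the empirical-measure half has a genuine gap. The set $\mc K(\ell',\delta)=\{\mu\in\mc P(V):\mu(\{v\le\ell'\})\ge 1-\delta\}$ is \emph{not} relatively compact in $\mc P(V)$ for a single fixed $\delta>0$: Prohorov requires a compact $K_\epsilon$ with $\sup_{\mu}\mu(K_\epsilon^{\mathrm c})\le\epsilon$ for \emph{every} $\epsilon>0$, whereas your set only constrains the mass outside a single finite set at the single level $\delta$. A concrete counterexample: with $V=\bb N$, $K=\{0\}$, $\delta=1/2$, the sequence $\mu_n=\tfrac12\delta_0+\tfrac12\delta_n$ lies in $\mc K$ but has no weakly convergent subsequence in $\mc P(V)$. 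To fix this you must impose a whole nested family of constraints, e.g.\ $\mc K_\ell:=\bigcap_{m\ge\ell}\{\mu:\mu(K_m^{\mathrm c})\le 1/m\}$ with $K_m=\{v\le a_m\}$ and $a_m\uparrow\infty$ chosen suitably ($a_m=m^2+Cm$ works). This is tight, hence compact, and the union bound $\bb P_x(\mu_T\notin\mc K_\ell)\le\sum_{m\ge\ell}\bb P_x(\mu_T(K_m^{\mathrm c})>1/m)$ combined with your Chebyshev estimate still yields the $e^{-T\ell}$ decay. This is exactly what the paper does.

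For the flow estimate your route is correct but more roundabout than necessary. You worry that a constant $F\equiv\lambda$ is illegal in Lemma~\ref{t:em2}, but the hypothesis there is only $r^F(y)=\sum_z r(y,z)e^{F(y,z)}<+\infty$, which is automatic for constant $F$ since $r^F=e^\lambda r$; your truncation-plus-monotone-convergence step is unnecessary. Also the Cauchy--Schwarz factors as written don't match: the supermartingale bound with parameter $2\lambda$ gives $\bb E_x\big(e^{2\lambda T\|Q_T\|-(e^{2\lambda}-1)T\langle\mu_T,r\rangle}\big)\le 1$ with exponent $e^{2\lambda}-1$, not $2(e^\lambda-1)$, so you must split the cross term using $e^{2\lambda}-1$ (and then require $e^{2\lambda}-1\le\sigma$). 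The paper sidesteps Cauchy--Schwarz entirely: it first shows $\bb P_x(\langle\mu_T,r\rangle>A'_\ell)\le e^{-T\ell}$ up to constants via the second bound in \eqref{sirena}, and then estimates $\bb P_x\big(\|Q_T\|>A_\ell,\,\langle\mu_T,r\rangle\le A'_\ell\big)$ directly from $\bb E_x(\bb M^F_T)\le 1$ on the good event, which is shorter and avoids any small-$\lambda$ restriction beyond what the exponential supermartingale already provides. The final step identifying $B_{A_\ell}\cap L^1_+(E)$ as compact in the bounded weak* topology is correct and is what the paper uses.
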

\begin{proof} We first   prove
\eqref{arcobaleno1}.
  For a sequence $a_\ell\uparrow +\infty$ to be chosen later, set
  $W_\ell :=\{ x\in V :\, v(x) \le a_\ell\}$. In view of item (v) in
  Condition~\ref{t:ccomp}, $W_\ell$ is a compact subset of $V$. Set now
  \begin{equation*}
    \mc K_\ell := \bigcap_{m\ge\ell} \Big\{\mu\in\mc P(V)\,:\:
    \mu\big(W_m^\mathrm{c} \big) \le \frac 1{m} \Big\}
  \end{equation*}
  and observe that, by Prohorov theorem, $\mc K_\ell$ is a compact
  subset of $\mc P(V)$.

  From item (vi) in Condition~\ref{t:ccomp} (for this step we only
  need it with $\sigma=0$) and the definition of $W_\ell$ we deduce $v
  \ge a_\ell \id_{W_\ell^\mathrm{c}} - C$.  By the exponential
  Chebyshev inequality and Lemma~\ref{t:letem} we then get
  \begin{equation*}
    \begin{split}
      &\bb P_x \Big( \mu_T\big( W_\ell^\mathrm{c} \big) > \frac
      1{\ell}\Big) \le\bb P_x \Big( \langle\mu_T, v\rangle >
      \frac{a_\ell}{\ell}-C\Big)
      \\
      &\qquad\quad
      \le \exp\Big\{ -T \Big[ \frac{a_\ell}{\ell} -C\Big]
      \Big\} \: \bb E_x \Big( e^{ T \langle\mu_T, v\rangle} \Big)
      \le \frac{C_x}{c} \,
      \exp\Big\{ -T \Big[ \frac{a_\ell}{\ell} -C\Big]\Big\}.
    \end{split}
  \end{equation*}
  By choosing $a_\ell = \ell^2 +C\ell$ the proof is now easily concluded.

  Let us now prove \eqref{arcobaleno2}. By the second bound in Lemma
\ref{t:letem} and Chebyshev inequality,   $ \bb P_x \big(
\langle\mu_T, r \rangle > \lambda \big)
      \le \frac{C_x}{c} \: e^{ -T ( \sigma \lambda -C)} $ for any
      $\lambda >0$.
      In particular we obtain that
 $$ \bb P_x \big(
\langle\mu_T, r \rangle > A'_\ell  \big)
      \le
 \frac{C_x}{c} e^{-T\ell}\,,\qquad \quad A'_\ell:= \sigma^{-1}(\ell +
      C)\,.
 $$ Hence,
   it is enough to show that
  for each $x\in V$ there exists a sequence
  $A_\ell\uparrow +\infty$
   such
  that for any $T>0$ and any $\ell\in\bb N$
  \begin{equation}
    \label{enough1}
     \bb P_x \Big( \|Q_T\|> A_\ell \,,\,
     \langle\mu_T, r\rangle \le A_\ell' \Big)  \le   e^{-T\,\ell}\,.
  \end{equation}
  We consider the exponential local martingale of  Lemma~\ref{t:em2}
  choosing there $F\colon E\to \bb R$ constant, $F(x,y)=\lambda \in
  (0,+\infty)$ for any $(x,y)\in E$. We deduce
  \begin{equation*}
    \begin{split}
    & \bb P_x \Big( \|Q_T\|> A_\ell \,,\,\langle\mu_T, r\rangle\le A_\ell' \Big)
    \\
    & \qquad =
    \bb E_x \Big( e^{ -T \big[ \lambda \|Q_T\|
      - (e^\lambda-1) \langle\mu_T,r\rangle \big]}
    \: \bb M^F_T  \: \id_{\{\|Q_T\|> A_\ell\}}
    \, \id_{\{\langle\mu_T,r\rangle \le A_\ell'\}}
    \Big)
    \\
    & \qquad \le
    \exp\big\{ -T \big[ \lambda A_\ell -(e^\lambda-1) A_\ell'\big] \big\}
  \end{split}
  \end{equation*}
  where we used Lemma~\ref{t:em2} in the last step.
  The proof of \eqref{enough1} is now completed by choosing
  $A_\ell = \lambda^{-1} \ell + \lambda^{-1} (e^\lambda-1) A_\ell'$.

\smallskip

 Recalling that the closed ball in $L^1_+(E)$ is
compact with respect to the bounded weak* topology, the exponential
tightness of the empirical flow is due to \eqref{arcobaleno2}.
\end{proof}

\bigskip
We next discuss the exponential tightness when
Condition~\ref{t:ccompls} is assumed.

\begin{proposition}
  \label{t:extls}
  Fix $x\in V$. If item (i) in Condition~\ref{t:ccompls} holds then
  the sequence of probabilities $\{\bb P_x\circ \mu_T^{-1}\}$ on $\mc
  P(V)$ is exponentially tight. If furthermore it holds also item (ii)
  in Condition~\ref{t:ccompls}, then the sequence of probabilities
  $\{\bb P_x\circ Q_T^{-1}\}$ on $L^1_+(E)$ is exponentially tight.
\end{proposition}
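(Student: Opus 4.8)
The plan is to establish the two exponential tightness statements separately, both as consequences of a single moment bound extracted from the logarithmic Sobolev inequality~\eqref{ls}. For the empirical measure this is essentially the hypercontractivity argument of~\cite{DeS}, adapted so as to be uniform for $x$ in compacts of $V$; for the empirical flow we couple that bound with the estimate on the exponential local martingale $\bb M^F$ of Lemma~\ref{t:em2}, following the proof of Proposition~\ref{t:etem}. The key estimate I aim at is the following: for every measurable $W\colon V\to[0,+\infty]$, every $x\in V$ and every $T>0$,
\[
  \bb E_x\Big(\exp\Big\{\int_0^T\! W(\xi_s)\,ds\Big\}\Big)
  \;\le\; \frac{1}{\sqrt{\pi(x)}}\,\big\langle\pi,\,e^{c_\mathrm{LS}W}\big\rangle^{T/c_\mathrm{LS}}\,.
\]
Since compacts of $V$ are finite and $\pi$ is everywhere strictly positive, the prefactor $\pi(x)^{-1/2}$ is harmless for exponential rates, uniformly for $x$ in compacts.

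To prove this estimate I would first take $W$ bounded. Set $g_t:=e^{t(\mc L+W)}\id\ge0$, so that $g_t(x)=\bb E_x\big(\exp\{\int_0^t W(\xi_s)\,ds\}\big)$ by the Feynman--Kac formula and $g_t\in L^2(\pi)$ with $\|g_0\|_{L^2(\pi)}=1$. Using that $\id\in\mathrm{Dom}(\mc L)$, that $\langle\mc L h,h\rangle_{L^2(\pi)}=\langle Sh,h\rangle_{L^2(\pi)}=-D_\pi(h)$ for $h$ in the domain, and that $W$ is a bounded multiplication operator, one gets the energy identity $\tfrac{d}{dt}\|g_t\|_{L^2(\pi)}^2=2\big(\langle\pi,W g_t^2\rangle-D_\pi(g_t)\big)$, whence $\|g_T\|_{L^2(\pi)}^2\le e^{2T\Lambda}$ with $\Lambda:=\sup\{\langle\pi,W f^2\rangle-D_\pi(f):\|f\|_{L^2(\pi)}=1\}$. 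Combining~\eqref{ls} applied to $\mu=f^2\pi$ with the entropy inequality $\langle\pi,W f^2\rangle\le c_\mathrm{LS}^{-1}\Ent(f^2\pi\,|\,\pi)+c_\mathrm{LS}^{-1}\log\langle\pi,e^{c_\mathrm{LS}W}\rangle$ (valid for $\|f\|_{L^2(\pi)}=1$, together with $D_\pi(|f|)\le D_\pi(f)$) yields $\Lambda\le c_\mathrm{LS}^{-1}\log\langle\pi,e^{c_\mathrm{LS}W}\rangle$. Finally, since $g_T\ge0$, testing against $\id_{\{x\}}$ gives $g_T(x)=\pi(x)^{-1}\langle\id_{\{x\}},g_T\rangle_{L^2(\pi)}\le\pi(x)^{-1/2}\|g_T\|_{L^2(\pi)}$, which is the claim. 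For $W\ge0$ unbounded one applies this to $W\wedge n$ and lets $n\to+\infty$ by monotone convergence, using $\langle\pi,e^{c_\mathrm{LS}(W\wedge n)}\rangle\le\langle\pi,e^{c_\mathrm{LS}W}\rangle$.

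For the empirical measure, fix a finite $K\subset V$ and $\lambda>0$ and apply the estimate with $W=\lambda\,\id_{K^\mathrm{c}}$; since $\langle\pi,e^{c_\mathrm{LS}\lambda\id_{K^\mathrm{c}}}\rangle=1+(e^{c_\mathrm{LS}\lambda}-1)\pi(K^\mathrm{c})\le\exp\{(e^{c_\mathrm{LS}\lambda}-1)\pi(K^\mathrm{c})\}$, the exponential Chebyshev inequality gives, for $\delta>0$,
\[
  \bb P_x\big(\mu_T(K^\mathrm{c})>\delta\big)\le
  \frac{1}{\sqrt{\pi(x)}}\;
  \exp\Big\{-T\Big[\lambda\delta-\tfrac{1}{c_\mathrm{LS}}(e^{c_\mathrm{LS}\lambda}-1)\pi(K^\mathrm{c})\Big]\Big\}\,.
\]
Since $\pi$ is a probability on the countable set $V$, one may choose finite sets $K_1\subset K_2\subset\cdots$ with $\pi(K_m^\mathrm{c})$ so small that, taking $\delta=1/m$ and $\lambda$ large depending on $\ell$ and $m$, the bracket above is $\ge\ell+m$; then $\mc K_\ell:=\{\mu\in\mc P(V):\mu(K_m^\mathrm{c})\le1/m\ \text{for all}\ m\}$ is compact by Prohorov's theorem and, summing the previous bound over $m$, $\varlimsup_{T\to+\infty}T^{-1}\log\bb P_x(\mu_T\notin\mc K_\ell)\le-\ell$, uniformly for $x$ in compacts of $V$. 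This gives the exponential tightness of $\{\bb P_x\circ\mu_T^{-1}\}$ using only item~(i).

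For the empirical flow I would argue exactly as in the proof of Proposition~\ref{t:etem}, up to the point where item~(vi) of Condition~\ref{t:ccomp} was used there: with $\bb M^F$ of Lemma~\ref{t:em2} and $F\equiv\lambda>0$ constant one obtains $\bb P_x\big(\|Q_T\|>A,\,\langle\mu_T,r\rangle\le A'\big)\le\exp\{-T[\lambda A-(e^\lambda-1)A']\}$, so it suffices to produce $A'_\ell\uparrow+\infty$ with $\varlimsup_T T^{-1}\log\bb P_x(\langle\mu_T,r\rangle>A'_\ell)\le-\ell$ and then take $A_\ell:=\lambda^{-1}\ell+\lambda^{-1}(e^\lambda-1)A'_\ell$. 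The tail of $\langle\mu_T,r\rangle$ is handled by the moment estimate with $W=(\sigma/c_\mathrm{LS})\,r$: by item~(ii), $\langle\pi,e^{c_\mathrm{LS}W}\rangle=\langle\pi,e^{\sigma r}\rangle<+\infty$, hence $\bb E_x\big(e^{(\sigma/c_\mathrm{LS})T\langle\mu_T,r\rangle}\big)\le\pi(x)^{-1/2}\langle\pi,e^{\sigma r}\rangle^{T/c_\mathrm{LS}}$, and Chebyshev produces the required bound. As the closed balls of $L^1_+(E)$ are compact in the bounded weak* topology, $\varlimsup_T T^{-1}\log\bb P_x(\|Q_T\|>A_\ell)\le-\ell$ is precisely the exponential tightness of $\{\bb P_x\circ Q_T^{-1}\}$ on $L^1_+(E)$. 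I expect the only genuinely delicate point to be the moment estimate itself --- justifying the $L^2(\pi)$ energy computation for the possibly unbounded potential $(\sigma/c_\mathrm{LS})r$ and transferring the bound from the stationary initial law to an arbitrary deterministic start $x$, which is what produces the factor $\pi(x)^{-1/2}$; the remaining steps are a transcription of the proof of Proposition~\ref{t:etem} and of the classical hypercontractivity argument. Note that item~(iii) of Condition~\ref{t:ccompls} plays no role in this proof.
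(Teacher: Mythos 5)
Your proposal is correct and reaches the same Feynman--Kac / log-Sobolev hypercontractivity argument as the paper, but it is packaged somewhat differently, so a comparison is worthwhile. The paper first constructs, via the auxiliary Lemma~\ref{t:lextls}, a single function $v$ with compact level sets normalized so that $\langle\pi,e^{v}\rangle=1$, establishes the moment bound $\bb E_x(e^{\lambda T\langle\mu_T,v\rangle})\le C_x$ by citing the spectral estimate \cite[App.~1, Lemma~7.2]{KL} together with the entropy inequality and \eqref{ls}, and then derives the compacts $\mc K_\ell$ from the level sets of $v$ exactly as in Proposition~\ref{t:etem}. You instead prove one clean general moment estimate, $\bb E_x(\exp\{\int_0^T W(\xi_s)\,ds\})\le\pi(x)^{-1/2}\langle\pi,e^{c_{\mathrm{LS}}W}\rangle^{T/c_{\mathrm{LS}}}$, via a direct $L^2(\pi)$ energy/Gronwall computation rather than the spectral citation, and you transfer to a deterministic start via Cauchy--Schwarz instead of the inequality $\bb E_x\le\pi(x)^{-1}\bb E_\pi$. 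You then specialize to $W=\lambda\id_{K^c}$ to build the measure-compacts by hand, and to $W=(\sigma/c_{\mathrm{LS}})r$ to get the tail of $\langle\mu_T,r\rangle$ before finishing with the $\bb M^F$-martingale argument, exactly as the paper does. What your route buys: it avoids Lemma~\ref{t:lextls} entirely, the one moment bound serves both statements, and the derivation is elementary and self-contained. What the paper's route buys: the single function $v$ with compact level sets produces the compacts $\mc K_\ell$ more automatically. Two small remarks. First, both proofs implicitly assume the $L^2(\pi)$ computation is valid despite the possibly unbounded generator; the paper makes this rigorous by a rate-truncation scheme in Appendix~\ref{montemonaco}, and you correctly flag that the same care is needed for your energy identity even when $W$ is bounded (not only for $W=(\sigma/c_{\mathrm{LS}})r$). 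Second, the display $\mc K_\ell:=\{\mu:\mu(K_m^{\mathrm{c}})\le 1/m\ \forall m\}$ does not actually depend on $\ell$; you want the nested family $\mc K_\ell:=\bigcap_{m\ge\ell}\{\mu:\mu(K_m^{\mathrm{c}})\le 1/m\}$ as in the proof of Proposition~\ref{t:etem}, with $K_m$ chosen (independently of $\ell$) so that the bracket with $\delta=1/m$ and a suitable $\lambda=\lambda(m)$ is at least $m$; summing $e^{-Tm}$ over $m\ge\ell$ then gives the claimed rate. These are presentation points; the mathematical content of your argument is sound.
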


While the first statement is a consequence of the general results in
\cite{DeS}, we next give a direct and alternative  proof also of
this result. We premise an elementary lemma whose proof is omitted.
\begin{lemma}
  \label{t:lextls}
  Let $\pi\in\mc P(V)$ be such that $\pi(x)>0$ for any $x\in V$.
  There exists a decreasing function $\psi_\pi\colon (0,1)\to (0,+\infty)$
  such that $\lim_{s\downarrow 0} \psi_\pi(s) =+\infty$ and
  \begin{equation*}
    \sum_{x\in V} \pi(x) \, \psi_\pi(\pi(x)) < +\infty.
  \end{equation*}
\end{lemma}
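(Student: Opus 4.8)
The plan is to construct $\psi_\pi$ explicitly by sorting the masses $\pi(x)$ in decreasing order and exploiting the fact that the series $\sum_x \pi(x)$ converges. First I would enumerate $V = \{x_1, x_2, \dots\}$ so that $\pi(x_1) \ge \pi(x_2) \ge \cdots > 0$ (if $V$ is finite the statement is trivial since any decreasing $\psi_\pi$ diverging at $0$ works, so assume $V$ is countably infinite). Since $\sum_n \pi(x_n) = 1 < +\infty$, the partial sums $\sigma_n := \sum_{k > n} \pi(x_k)$ form a strictly decreasing sequence with $\sigma_n \downarrow 0$. The idea is to make $\psi_\pi(\pi(x_n))$ grow slowly enough with $n$ that the weighted sum still converges; a convenient quantitative choice is to let $\psi_\pi$ be tied to $1/\sqrt{\sigma_n}$ or to $n^{1/2}$-type growth, whichever is easier to make monotone in the argument $\pi(x_n)$.

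Concretely, I would set $a_n := 1/\sqrt{\sigma_{n-1}}$ (with $\sigma_0 = 1$), so that $a_n \uparrow +\infty$, and then define $\psi_\pi$ on $(0,1)$ by declaring $\psi_\pi(s) := a_n$ for $s \in (\pi(x_{n}), \pi(x_{n-1})]$, interpolating/extending so that $\psi_\pi$ is decreasing on all of $(0,1)$ (one must be slightly careful when several consecutive $\pi(x_k)$ coincide, but then one just uses the largest relevant index, which only makes $\psi_\pi$ larger and keeps monotonicity). Then $\psi_\pi(\pi(x_n)) \le a_{n+1} = 1/\sqrt{\sigma_n}$, so
\begin{equation*}
  \sum_{n} \pi(x_n)\, \psi_\pi(\pi(x_n)) \le \sum_n \frac{\pi(x_n)}{\sqrt{\sigma_n}} = \sum_n \frac{\sigma_{n-1} - \sigma_n}{\sqrt{\sigma_n}}.
\end{equation*}
The right-hand side is a telescoping-type sum that converges: since $t \mapsto t^{-1/2}$ is decreasing, $\frac{\sigma_{n-1}-\sigma_n}{\sqrt{\sigma_n}} \le \int_{\sigma_n}^{\sigma_{n-1}} t^{-1/2}\,dt = 2(\sqrt{\sigma_{n-1}} - \sqrt{\sigma_n})$, and summing over $n$ gives a bound of $2\sqrt{\sigma_0} = 2 < +\infty$. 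This simultaneously uses $\lim_{s\downarrow 0}\psi_\pi(s) = +\infty$ (because $a_n \uparrow +\infty$ and $\pi(x_n) \downarrow 0$) and the summability.

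The main obstacle, though a minor one, is the bookkeeping needed to guarantee that $\psi_\pi$ is genuinely a \emph{decreasing} function on the whole interval $(0,1)$ rather than merely a decreasing sequence along the values $\pi(x_n)$: one must handle ties among the $\pi(x_n)$ and fill in the gaps between consecutive values of $\pi(x_n)$ and below $\inf_n \pi(x_n) = 0$ consistently. This is resolved cleanly by first defining, for each $s \in (0,1)$, the index $n(s) := \#\{k : \pi(x_k) \ge s\}$ (finite since $\pi(x_k)\to 0$), which is a nondecreasing, integer-valued function of $1/s$, and then setting $\psi_\pi(s) := a_{n(s)+1}$; monotonicity and the divergence at $0$ are then immediate, and the estimate above goes through with $n(s) \ge n$ whenever $s \le \pi(x_n)$. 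I would present the argument in this last form to keep it watertight.
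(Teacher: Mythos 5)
Your overall approach --- order the masses decreasingly, build an increasing sequence $a_n\uparrow+\infty$ tied to the tail sums $\sigma_n=\sum_{k>n}\pi(x_k)$, and read $\psi_\pi$ off this sequence --- is the same as the paper's. However, the quantitative step contains a genuine error. Your construction assigns $\psi_\pi(\pi(x_n))=a_{n+1}=1/\sqrt{\sigma_n}$, so the sum to control is $\sum_n(\sigma_{n-1}-\sigma_n)/\sqrt{\sigma_n}$, and the integral bound you invoke,
\begin{equation*}
\frac{\sigma_{n-1}-\sigma_n}{\sqrt{\sigma_n}} \le \int_{\sigma_n}^{\sigma_{n-1}} t^{-1/2}\,dt = 2\bigl(\sqrt{\sigma_{n-1}}-\sqrt{\sigma_n}\bigr),
\end{equation*}
is in the wrong direction: since $t\mapsto t^{-1/2}$ is decreasing, $\sigma_n^{-1/2}$ is its \emph{maximum} on $[\sigma_n,\sigma_{n-1}]$, so the rectangle of height $\sigma_n^{-1/2}$ \emph{dominates} the integral, giving $\ge$, not $\le$. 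And this is not just a lost constant: taking $\sigma_0=1$ and $\sigma_n=2^{-2^n}$ for $n\ge1$ yields a legitimate strictly decreasing probability $\pi(x_n)=\sigma_{n-1}-\sigma_n$, for which each term $(\sigma_{n-1}-\sigma_n)/\sqrt{\sigma_n}=1-2^{-2^{n-1}}\to1$, so the series diverges and the $\psi_\pi$ you build fails the conclusion of the lemma.

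The defect is an off-by-one in which tail governs $\psi_\pi(\pi(x_n))$. What does converge is $\sum_n(\sigma_{n-1}-\sigma_n)/\sqrt{\sigma_{n-1}}$, because $\sigma_{n-1}^{-1/2}$ is the \emph{minimum} of $t^{-1/2}$ on $[\sigma_n,\sigma_{n-1}]$, so the rectangle is bounded by the integral and the sum telescopes to at most $2\sqrt{\sigma_0}$. Equivalently, you should set $\psi_\pi(\pi(x_n))\approx a_n=1/\sqrt{\sigma_{n-1}}$ rather than $a_{n+1}$, i.e.\ define $\psi_\pi(s):=a_{n(s)}$ instead of $a_{n(s)+1}$; and to keep the estimate safe under ties, the index assigned to a block of equal masses should be the \emph{smallest} one in the block, which guarantees $\psi_\pi(\pi(x_n))\le a_n$ for every $n$. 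The paper's explicit choice $a_\pi(k)=\bigl[\sqrt{R_k}+\sqrt{R_{k+1}}\bigr]^{-1}$, with $R_k=\sigma_{k-1}$, sidesteps all of this by making $\pi(x_k)\,a_\pi(k)=\sqrt{R_k}-\sqrt{R_{k+1}}$ telescope exactly.
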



\begin{proof}[Proof of Proposition~\ref{t:extls}]
  We prove first the exponential tightness of the empirical
  measure.
  Let $\pi$ be the invariant measure of the chain,
  $\psi_\pi$ be the function provided by Lemma~\ref{t:lextls}
  and $\alpha:=\sum_x \pi(x)\psi_\pi(\pi(x))<+\infty$.
  We define
  $v \colon V\to (0,+\infty)$ as
  \begin{equation*}
    v(x) := \log \frac{\psi_\pi \big(\pi(x)\big)}{\alpha} \,,\qquad x\in V.
  \end{equation*}
  Then, in view of
  Lemma~\ref{t:lextls}, $v$ has compact level sets and
  $\big\langle\pi,e^v\big\rangle=1$.

  By the proof of Proposition~\ref{t:etem}, it is enough to show the
  following bound.  For each $x\in V$ there exist constants $\lambda,
  C_x>0$ such that for any $T>0$
  \begin{equation}
    \label{exbbis}
    \bb E_x \Big( e^{\lambda T\, \langle\mu_T,v\rangle }\Big) \le C_x.
  \end{equation}

Since the function $v$ diverges at infinity, it is bounded from below
and has finite level sets $V_n:=\{x \in V\,:\,v(x) \leq n\}$. We
define $v_n(x) := v(x) \id_{x \in V_n}$ and set for $x,y \in V$
 $$ r_n (x,y) := \begin{cases}
 r(x,y) & \text{ if } x  \in V_n  \\
r(x,y)/r(x)  & \text{ if } x \not \in V_n
\end{cases}
\qquad \pi_n (x) := \begin{cases}\frac{\pi(x)}{Z_n} & \text{ if } x
\in V_n \\
\frac{ \pi (x)r (x)  }{Z_n} & \text{ if } x \not \in V_n
\end{cases}
$$
where 
 $Z_n$ is the
normalizing constant making $\pi _n$ a probability measure on $V$.
Due to Condition \ref{t:ccompls} it holds $\langle \pi, r \rangle <
+\infty$ , thus implying that $Z_n$ is well defined  and that $\lim
_{n\to \infty} Z_n = 1$.

We notice that  the function $r_n\colon V \to (0,+\infty)$,
$r_n(x):= \sum _{y \in V} r_n(x,y)$, is bounded from above. We then
consider the continuous--time Markov chain $\xi^{(n)}$    in $V$
with transition rates $r_n(\cdot, \cdot)$. Since $\pi_n(x) r_n(x,y)=
\pi (x) r(x,y) /Z_n$,  we derive that $\pi_n$ is the unique
invariant distribution of $\xi^{(n)}$. We denote by $\bb E_x ^{(n)}$
the expectation w.r.t. the law of the Markov chain $\xi^{(n)}$
starting at $x$ and by $\mc A_n$ the subset of $ D([0,T];V)$ defined
as $\mc A _n= \{X: X_t \in V_n \, \forall t \in [0,T]\}$. Then we
have
\begin{equation}\label{situazione}
 \bb E_x \Big( e^{\lambda T\, \langle\mu_T,v\rangle } \Big)= \lim
 _{n\to \infty} \bb E_x \Big( e^{\lambda T\, \langle\mu_T,v_n\rangle }\id_{\mc A_n}
 \Big)= \lim
 _{n\to \infty} \bb E^{(n)} _x \Big( e^{\lambda T\, \langle\mu_T,v_n\rangle }\id_{\mc A_n}
 \Big)
\end{equation}
(the first identity follows from the monotone convergence theorem).
Since $v_n$ and $r_n$ are  bounded function, we can apply
\cite[App.~1, Lemma~7.2]{KL} and get
  \begin{equation}
    \label{camminarebis}
    \begin{split}
    & \bb E^{(n)} _x \Big( e^{\lambda T\, \langle\mu_T,v_n\rangle }\id_{
      \mc A_n}
    \Big)  \leq  \frac{1}{\pi_n(x)} \bb E^{(n)}_{\pi_n} \Big( e^{\lambda T\,
      \langle\mu_T,v_n\rangle } \Big)
    \\
    & \qquad \leq
    \frac{1}{\pi_n(x)} \exp\Big\{ T \sup_{f \,: \: \pi_n(f^2)=1 }
    \big[ - D^{(n)}_{\pi_n} (f)
    + \lambda \,\langle\pi_n, f^2 \, v_n\rangle \big] \Big\}\,.
    \end{split}
  \end{equation}
  Since $v_n$ vanishes on $V_n^\mathrm{c}$ and $v_n \leq v$ we have
  $\langle \pi_n, f^2 v_n\rangle
  \leq Z_n^{-1} \langle\pi,f^2 v\rangle$, while from the identity  $\pi_n(x)
  r_n(x,y)= \pi (x) r(x,y)
  /Z_n$ we get $D^{(n)}_{\pi_n} (f ) =  Z_n^{-1} D _{\pi} (f)$.
  Hence
  \begin{equation}\label{ercolino}
   - D^{(n)}_{\pi_n} (f)
    + \lambda \,\langle\pi_n, f^2 \, v_n\rangle
    \le \frac{1}{Z_n}
    \big[ - D_{\pi} (f)
    + \lambda \,\langle\pi, f^2 \, v\rangle \big]\,.
  \end{equation}
We next show that if $\lambda \in (0,1/c_{\mathrm{LS}})$ then the
right hand side in \eqref{ercolino} is bounded above by zero whenever $\pi(f^2)
<+\infty$ (note that,
in view of
Remark~\ref{piscina98} and  since  $\pi_n(f^2)=1$, it holds  $\pi(f^2)
<+\infty$). To this aim let $f_* := f / \sqrt{ \pi(f^2)}$, hence $\pi ( f_*^2)=1$.
The
basic entropy inequality yields
\begin{equation*}
  \langle \pi,  f_* ^2 v \rangle \le \log \big\langle\pi,e^{v}\big\rangle +
  \Ent (\mu|\pi), \qquad \mu = f_*^2 \pi.
\end{equation*}
Recalling that $\langle\pi,e^{v}\rangle=1$, the logarithmic Sobolev
inequality \eqref{ls}  implies that $\langle \pi,  f_* ^2 v \rangle \leq c_{\rm LS} D _\pi( f_*)$, hence our claim. The bound \eqref{exbbis}
follows, thus concluding the proof of the exponential tightness of the
empirical measure.

  To prove the exponential tightness of the empirical flow,
  we first observe that the only properties of $v$ used to derive
  \eqref{exbbis} are that $v$ has compact level sets and satisfies
  $\langle\pi, e^v\rangle =1$. By Remark \ref{piscina98} and
  item (ii) in Condition \ref{t:ccompls}, the function
  $\tilde v = \sigma r - \log\langle \pi, e^{\sigma r}\rangle $ meets these
  two requirements. Hence the bound \eqref{exbbis} holds for $\tilde
  v$ which implies
  \begin{equation}
    \label{sirena20}
    \bb E_x    \Big( e^{ T\sigma'  \langle\mu_T, r\rangle } \Big) \le
    e^{TC} {C_x} .
  \end{equation}
  for some $\sigma', C,C_x >0$.
   In view of \eqref{exbbis} and \eqref{sirena20}, the proof of the
  exponential tightness of the empirical flow is achieved by the
  argument leading to \eqref{arcobaleno2}.
\end{proof}

We conclude with a simple observation on the stationary flow:
\begin{lemma}\label{basket}
Assume at least one between Conditions \ref{t:ccomp} and
\ref{t:ccompls} to hold. Then $\langle \pi,r\rangle < +\infty$,
equivalently $Q^\pi \in L^1_+(E)$.
\end{lemma}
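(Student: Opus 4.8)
The plan is to handle the two compactness conditions separately, since the hypercontractivity-type Condition~\ref{t:ccompls} already contains the estimate we need, whereas the Donsker--Varadhan-type Condition~\ref{t:ccomp} requires a short truncation argument built on Lemma~\ref{t:letem}. In both cases the equivalence with $Q^\pi\in L^1_+(E)$ is immediate: recall from the discussion after \eqref{Qmu} that $Q^\mu\in L^1_+(E)$ if and only if $\langle\mu,r\rangle<+\infty$, so it suffices to prove $\langle\pi,r\rangle<+\infty$.

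Suppose first that Condition~\ref{t:ccompls} holds. Then item~(ii) provides $\sigma>0$ with $\big\langle\pi,e^{\sigma r}\big\rangle<+\infty$. Since $\sigma\, r(x)\le e^{\sigma r(x)}$ for every $x\in V$, we get $\langle\pi,r\rangle\le\sigma^{-1}\big\langle\pi,e^{\sigma r}\big\rangle<+\infty$, and we are done.

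Suppose now that Condition~\ref{t:ccomp} holds. The key input is the second bound in \eqref{sirena}, i.e.\ $\bb E_x\big(e^{T\sigma\langle\mu_T,r\rangle}\big)\le e^{TC}\,C_x/c$ for each $x\in V$ and $T>0$. Fix $x\in V$, $N\in\bb N$, and $a>C/\sigma$. Since $r\wedge N\le r$, the exponential Chebyshev inequality together with this bound gives
\[
\bb P_x\big(\langle\mu_T,\, r\wedge N\rangle>a\big)\;\le\;\bb P_x\big(\langle\mu_T,r\rangle>a\big)\;\le\;e^{-T(\sigma a-C)}\,\frac{C_x}{c},
\]
and the right-hand side tends to $0$ as $T\to+\infty$ because $\sigma a-C>0$. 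On the other hand $r\wedge N$ is bounded, so by the ergodic theorem \eqref{ergodico} we have $\langle\mu_T,r\wedge N\rangle\to\langle\pi,r\wedge N\rangle$ $\bb P_x$-a.s., hence in probability; if $\langle\pi,r\wedge N\rangle>a$ held, the left-hand side above would converge to $1$, a contradiction. Therefore $\langle\pi,r\wedge N\rangle\le a$ for every $a>C/\sigma$, i.e.\ $\langle\pi,r\wedge N\rangle\le C/\sigma$, and letting $N\uparrow+\infty$ and using monotone convergence yields $\langle\pi,r\rangle\le C/\sigma<+\infty$.

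I do not expect a genuine obstacle here; the only point that needs care is that $r$ is in general unbounded, so the ergodic theorem cannot be applied to $r$ itself and one must pass through the bounded truncations $r\wedge N$, closing the argument by monotone convergence. An equally short alternative under Condition~\ref{t:ccomp} is to apply Jensen's inequality to \eqref{sirena}, obtaining $\bb E_x\langle\mu_T,r\rangle\le C/\sigma+\frac{1}{\sigma T}\log(C_x/c)$, and then let $T\to+\infty$ after bounding $\langle\mu_T,r\wedge N\rangle\le\langle\mu_T,r\rangle$ and using bounded convergence for the truncations; this gives the same estimate $\langle\pi,r\rangle\le C/\sigma$.
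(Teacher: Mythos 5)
Your proof is correct and follows essentially the same route as the paper: both treat Condition~\ref{t:ccompls} as immediate from item~(ii), and both combine the exponential bound $\bb E_x\big(e^{T\sigma\langle\mu_T,r\rangle}\big)\le e^{TC}C_x/c$ from Lemma~\ref{t:letem} with the ergodic theorem applied to a bounded truncation of $r$, then pass to the limit. The paper realizes the comparison by restricting the expectation to a high-probability event on which $\mu_T(x_n)\ge\pi(x_n)/2$ for finitely many $n$, whereas you use exponential Chebyshev (or, in your alternative, Jensen) followed by monotone convergence; this is a minor variation in the final combination step and even yields the marginally sharper bound $\langle\pi,r\rangle\le C/\sigma$ in place of $2C/\sigma$.
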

\begin{proof} The thesis is trivially true
under Condition \ref{t:ccompls}.  Let us assume Condition
\ref{t:ccomp}. By Lemma \ref{t:letem} we have $\bb
E_x    \big( e^{ T\sigma \langle\mu_T, r\rangle } \big) \le
    e^{TC}C_x/c$.
 We restrict to
$V$ infinite, the finite case  being obvious. Enumerating the points
in $V$ as $\{x_n\}_{n \geq 0}$,  by the ergodic theorem
\eqref{ergodico} fixed $N$ there exists a time $T_0=T_0(N)>0$ and a
Borel set $\mc A \subset D( \bb R_+;V)$ such that (i) $\bb P_x(\mc
A) \geq 1/2$ and (ii)  $ \mu_T(x_n) \geq \pi (x_n)/2$  for all $T
\geq T_0$ and $n \leq N$ $\bb P_x$--a.s. on $\mc A$. Hence, for all
$T \geq T_0$ it holds
$$ e^{ T \sigma  \sum _{n=0}^N \pi (x_n)r(x_n) /2}/2\leq \bb E_x \big(
e^{T \sigma  \sum _{n=0}^N \mu_T (x_n)r(x_n)}; \mc A\big) \leq \bb
E_x \big( e^{ T\sigma \langle\mu_T, r\rangle } \big) \le
  e^{TC}C_x/c .
$$
This implies that $\sum _{n=0}^N \pi (x_n)r(x_n)\leq 2C/\sigma$. To
conclude it is enough to take the limit $N \to +\infty$.
\end{proof}

\section{Structure of divergence--free flows in
  $L^1_+(E)$}
\label{s:geometria}

In this section we show that any divergence--free flow in $L^1_+(E)$,
and more in general any divergence--free flow in $\bb R_+^E$ with zero
flux towards infinity, can be written as superposition of flows along
self avoiding finite cycles.  See \cite{GV} for other problems related
to cyclic decompositions of divergence--free flows on graphs and
\cite{Smi} for similar decompositions for divergence--free vector
valued measures on $\mathbb R^d$.

We first introduce some key graphical structures.  A \emph{finite
  cycle} $C$ in the oriented graph $(V,E)$ is a sequence
$(x_1,\ldots,x_k)$ of elements of $V$ such that $(x_{i},x_{i+1})\in E$
when $i=1,\dots, k$ and the sum in the indices is modulo $k$.  A
finite cycle is \emph{self avoiding} if for $i\neq j$ it holds
$x_i\neq x_j$. Given $(x,y)\in E$, if there exists an index
$i=1,\dots,k$ such that $(x,y)=(x_i,x_{i+1})$
we write $(x,y)\in C$. Similarly,   given $x\in V$, if there exists
an index $i=1,\dots, k$ such that $x=x_i$ we say that $x\in C$. The
collection of all the self avoiding finite cycles in $(V,E)$ is a
countable set which we denote by $\mc C$.  In the sequel we shall
mostly regard elements $C\in\mc C$ as finite subsets of $E$ and
denote by $|C|$ the corresponding cardinality. Consider an invading
sequence $V_n \nearrow V$  of finite subsets $V_n$. This means a
sequence such that $|V_n|<+\infty$, $V_n\subset V_{n+1}$ and
moreover $\cup_nV_n=V$. For any fixed $n$ we define
\begin{equation}\label{nevischio}
E_n:=\left\{(y,z)\in E\,:\, y,z\in V_n\right\}\,, \end{equation} and
observe that it is an invading sequence of edges. Given a flow $Q\in
\bb R_+ ^E$, we define
\begin{align}
&E(Q):=\left\{(y,z)\in E\,:\, Q(y,z)>0\right\}\,, \label{freddo1}\\
& M_n(Q):=\max_{(y,z)\in E_n}Q(y,z)\,, \label{freddo2}\\
& \phi_n^{+}(Q):=\sum_{y\in V_n, z\not\in V_n}Q(y,z)\,,\label{freddo3}\\
& \phi_n^{-}(Q):=\sum_{y\not\in V_n, z\in
V_n}Q(y,z)\,.\label{freddo4}
\end{align}
The definition \eqref{divergenza_fluss} of the divergence of a flow
$Q$ is well posed also if $Q\not\in L_+^1(E)$ provided the incoming
and outgoing fluxes are finite at every vertex.
In this case the series in \eqref{freddo3} and \eqref{freddo4} are
convergent.
By a divergence--free flow $Q\in \bb R_+^E$ we mean that $Q$  has well
defined vanishing divergence.
Moreover, we say that $Q$  has zero flux towards infinity if
there exists an invading sequence  $V_n  \nearrow
V$  of finite subsets $V_n$ such that
\begin{equation}
\lim_{n\to +\infty} \phi_n^{+}(Q)=0\,. \label{fluflu}
\end{equation}
Finally, we  say that $Q$ admits a cyclic decomposition if there are
constants  $\hat Q(C)\ge 0$, $C\in\mc C$
  such that
  \begin{equation}
    \label{c:2}
    Q = \sum_{C\in\mc C} \hat Q(C)\id_C\,.
  \end{equation}
  Namely, for each $(y,z)\in E$ it holds
 $   Q (y,z) =\sum_{
 C\in\mc C,\,
 C\ni (y,z)} \hat Q(C)$.
We emphasize that the constants  $\hat Q(C)$, $C\in\mc C$, are
not uniquely determined by the flow $Q$.

\begin{lemma}
  \label{t:cicli}
  Let $Q\in \bb R _+^E$ be a divergence--free flow having zero flux
  towards infinity. Then $Q$ admits a cyclic decomposition
  \eqref{c:2}. In particular, any divergence--free flow $Q \in
  L^1_+(E)$ has a cyclic decomposition.
\end{lemma}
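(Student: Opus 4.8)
The plan is to extract self-avoiding cycles from $Q$ one at a time by a greedy procedure, at each step removing the smallest positive flow value along the cycle, and to control the residual flow so that the procedure exhausts all of $Q$ in a countable number of steps. First I would reduce to the following ``single-cycle extraction'' claim: if $Q' \in \bb R_+^E$ is a nonzero divergenceless flow with zero flux towards infinity (with respect to some invading sequence $V_n \nearrow V$), then $E(Q')$ contains a self-avoiding finite cycle $C$, and moreover one can choose $C$ so that $\min_{(y,z)\in C} Q'(y,z) = M_n(Q')$ is attained at an edge realizing a controlled value. The existence of \emph{some} cycle in $E(Q')$ is the key geometric input: starting from any edge $(x_1,x_2) \in E(Q')$, the divergenceless condition $\div Q'(x_2) = 0$ forces an outgoing edge $(x_2, x_3) \in E(Q')$, and one iterates; the walk cannot escape to infinity because the zero-flux-towards-infinity condition bounds the total outflow $\phi_n^+(Q')$ across the boundary of each $V_n$, so (after the walk has used up that bounded budget) it must stay within some finite $V_n$ and hence repeat a vertex, producing a cycle.

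Given the single-cycle claim, I would run the iteration: set $Q_0 := Q$, and having defined $Q_k$ nonzero, pick a self-avoiding cycle $C_{k+1} \subseteq E(Q_k)$, let $c_{k+1} := \min_{(y,z)\in C_{k+1}} Q_k(y,z) > 0$, and set $Q_{k+1} := Q_k - c_{k+1}\, \id_{C_{k+1}}$. Each $Q_{k+1}$ is again in $\bb R_+^E$ (by the minimality of $c_{k+1}$), is again divergenceless (since $\div \id_C = 0$ for any cycle $C$), and has zero flux towards infinity with respect to the same invading sequence (as $0 \le \phi_n^+(Q_{k+1}) \le \phi_n^+(Q_k) \le \phi_n^+(Q)$, or more simply $\phi_n^+(Q_{k+1}) \le \phi_n^+(Q) \to 0$). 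By construction $\sum_{k} c_k \id_{C_k} \le Q$ pointwise on $E$; I would then show $\sum_{k\ge 1} c_k\, \id_{C_k} = Q$, i.e.\ that the residual $Q_\infty := \lim_k Q_k = Q - \sum_k c_k \id_{C_k}$ (the pointwise decreasing limit) vanishes. This is where the accounting must be done carefully: $Q_\infty \ge 0$, is divergenceless, and has zero flux towards infinity, so if it were nonzero the single-cycle claim would give yet another cycle $C$ inside $E(Q_\infty)$ — and it suffices to arrange the extraction so that this cannot happen, e.g.\ by choosing at step $k$ a cycle through an edge of current maximal residual value, so that each surviving edge's value is driven to $0$.

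The cleanest way to force exhaustion is the standard one: at step $k$, among all edges of $E_k \cap E(Q_{k-1})$ (intersecting with the $k$-th set $E_n$ of the invading sequence of edges) choose a cycle through an edge $e_k$ achieving $M_k(Q_{k-1})$, the maximal residual value on $E_k$; then after step $k$ that edge $e_k$ has residual value $0$. Combined with a diagonal argument over the invading sequence, every fixed edge $(y,z) \in E$ lies in some $E_n$ and is, after finitely many steps, either already at value $0$ or gets selected and zeroed; hence $Q_\infty(y,z) = 0$ for every $(y,z)$. Setting $\hat Q(C) := \sum_{k\,:\,C_k = C} c_k$ (a sum of nonnegative terms, possibly rearranging the countably many cycles), we obtain the desired decomposition \eqref{c:2}. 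Finally, the ``in particular'' statement follows since any $Q \in L^1_+(E)$ automatically has zero flux towards infinity: $\phi_n^+(Q) \le \sum_{(y,z)\notin E_n} Q(y,z) \to 0$ as $n \to \infty$ by absolute summability, for any invading sequence $V_n \nearrow V$.

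The main obstacle I anticipate is not the existence of a single cycle (which is an elementary pigeonhole-plus-divergence argument) but the \emph{termination/exhaustion} bookkeeping: one must show the greedy extraction really does consume all of $Q$ rather than leaving a nonzero divergenceless residue (imagine a flow supported on a bi-infinite ``ladder'' of cycles). The zero-flux-towards-infinity hypothesis is exactly what rules this out, and the delicate point is to use it correctly — it does not by itself prevent infinitely many cycles, but together with the maximal-edge selection rule it guarantees that each individual edge is zeroed after finitely many steps, which is all that is needed for pointwise convergence of the decomposition.
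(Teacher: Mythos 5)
Your overall plan — extract self-avoiding cycles greedily by subtracting the cycle minimum, then show the residual vanishes — is the same strategy as the paper, but both of your two key sub-arguments have genuine gaps, and the paper's proof fills them with ideas that do not appear in your sketch.

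\textbf{Cycle existence.} Your argument (follow an outgoing edge at each step; the walk ``cannot escape to infinity because of the bounded outflow budget'') is not a proof. A single walk does not consume any budget: following one outgoing edge from each vertex makes no quantitative contact with the flux $\phi_n^+(Q')$. The paper's argument is quite different and is the real content. It first observes that the quantity $n^*:=\inf\{n: M_n(Q')>\phi_n^+(Q')\}$ is finite (because $\phi_n^+\to 0$ while $M_n$ is nondecreasing and eventually positive). It then collapses $V\setminus V_{n^*}$ to a single ghost vertex $g$, obtaining a finite divergenceless flow, decomposes it into cycles by the finite-graph result, and observes that if every positive-weight cycle passed through $g$, then the edge achieving $M_{n^*}(Q')$ would have value at most the total weight of cycles through $g$, which equals $\phi_{n^*}^+(Q')<M_{n^*}(Q')$ — a contradiction. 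This ghost-vertex reduction is the geometric idea your sketch is missing.

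\textbf{Exhaustion.} Your proposed selection rule (``choose a cycle through an edge achieving the maximal residual value, so that after step $k$ that edge has residual value $0$'') is simply wrong: you subtract $c_{k+1}=\min_{(y,z)\in C_{k+1}}Q_k(y,z)$ along the cycle, which zeroes the \emph{minimum}-value edge of the cycle, not the maximum. The paper does not claim, and does not need, that any fixed edge is ever exactly zeroed after finitely many steps; it only needs $Q^i(y,z)\to 0$. It gets this as follows: because at step $i$ the extracted cycle lies inside $V_{n_i}$, one edge of $E_{n_i}$ is zeroed, so $|E_{n_i}\cap E(Q^{i+1})|\le |E_{n_i}\cap E(Q^i)|-1$; this forces $n_i\to\infty$. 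Then, for any fixed $(y,z)$, once $(y,z)\in E_{n_i-1}$ one has
\[
Q^i(y,z)\le M_{n_i-1}(Q^i)\le \phi_{n_i-1}^+(Q^i)\le \phi_{n_i-1}^+(Q)\longrightarrow 0,
\]
using the defining property of $n_i$ in the middle inequality and $Q^i\le Q$ in the last. This chain, and the monotonicity trick $|E_{n_i}\cap E(Q^{i+1})|\le|E_{n_i}\cap E(Q^i)|-1$ forcing $n_i\to\infty$, are exactly the pieces your accounting lacks; without them the residual $Q_\infty$ could well be a nonzero divergenceless flow (your ``bi-infinite ladder'' worry is not actually ruled out by your rule).

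Your ``in particular'' reduction ($L^1_+(E)$ flows have zero flux towards infinity) is correct and matches the paper.
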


\begin{proof}
Since  \eqref{fluflu} holds for any invading sequence of vertices if
 $Q\in L^1_+(E)$,  the second statement  follows directly
from the former on which we concentrate.

On a finite graph  any divergence--free flow admits a cyclic
decomposition. The proof follows classical arguments (see e.g.
\cite{GV,MQ}). If $Q$ has finite support, i.e. if $|E(Q)|<+\infty$,
the thesis follows directly by the analogous result on finite
graphs. We will then consider only the case of infinite support,
using below the result in the finite case. Let $V_n$ be
an invading sequence satisfying \eqref{fluflu}.

We assume $|E(Q)|=+\infty$ and  $\div Q=0$. Due to the zero
divergence condition,  a discrete version of the Gauss theorem
guarantees that $\phi_n^+(Q)=\phi_n^-(Q)$. We define by an iterative
procedure a sequence of flows $Q^i $, $i \geq 0$,  with infinite
support and having zero flux towards infinity as follows. We set
$Q^0:=Q$ and explain how to define $Q^{i+1}$ knowing $Q^i$. First,
we define $n_i:=\inf\left\{n\in \mathbb N\,:\,
M_n(Q^i)>\phi^+_n(Q^i)\right\}$. Since $Q^i\neq 0$, it must be
$n_i<+\infty$. Indeed,  $\phi^+_n(Q^i)$ is a sequence in $n$
converging to zero, while  $M_n(Q^i)$ is a non decreasing sequence
not identically zero. Let $g$ be a ghost site and define the  flow
$Q^i_g$ on a finite graph having vertices $V_{n_i}\cup\{ g\}$ as
$$
\left\{
\begin{array}{lr}
Q^i_g(y,z):=Q^i(y,z)\,, & (y,z)\in E_{n_i}\,,\\
Q^i_g(y,g):=\sum_{z\not\in V_{n_i}}Q^i(y,z)\,, & y\in
V_{n_i}\,,\\
Q^i_g(g,y):=\sum_{z \not\in V_{n_i}}Q^i(z,y)\,, & y\in
V_{n_i}\,.\\
\end{array}
\right.
$$
Roughly speaking, the flow $Q^i_g$ is obtained from $Q^i$ by
collapsing all vertices outside $V_{n_i}$ into a single vertex,
called $g$. By construction we have $\div  Q^i_g=0$. Calling
$\mathcal C_{n_i}^g$ the collection of self avoiding cycles of the
finite graph and using the validity of the cyclic decomposition in
the finite case, we have
\begin{equation}
Q^i_g=\sum_{C\in \mathcal C_{n_i}^g }\hat Q^i_g(C)\id_C\,.
\label{decfin}
\end{equation}
We claim that in the decomposition \eqref{decfin} there exists a
self avoiding cycle $C_i$ not visiting the ghost site $g$ and such
that $\hat Q^i_g(C_i)>0$.
Let us suppose by contradiction that our claim is false and let
$(x^*,y^*)\in E_{n_i}$ be  such that $Q^{i}(x^*,y^*)=M_{n_i}(Q^i)$.
Then we have
$$
M_{n_i}(Q^i)=Q^{i}(x^*,y^*)=\sum_{C\in \mathcal C_{n_i}^g }\hat
Q^i_g(C) \id _{(x^*,y^*)\in C}\leq\sum_{C\in \mathcal C_{n_i}^g
}\hat Q^i_g(C)=\phi_{n_i}^+(Q^i)\,.
$$
The last equality follows by the fact that any cycle
with positive weight in $\mathcal C_{n_i}^g$ has to contain necessarily
the ghost site $g$.
This contradicts the definition of $n_i$, thus proving our claim.
\smallskip

At this point, we know that there exists a self avoiding cycle
$C_i:=(x_1, \dots ,x_k)$ such that $x_j\in V_{n_i}$ and
$Q^i(x_j,x_{j+1})>0$ for any $j$ (the sum in the indices is modulo
k). We fix $m_i:=\min_{j=1,\dots,k}Q^i(x_j,x_{j+1})$ and define
$$
Q^{i+1}:=Q^i-m_i\id_{C_i}= Q-\sum_{j=0}^im_j\id_{C_j} \,.
$$
With this definition we have that $Q^{i+1}$ is 
an element of $\bb R_+^E$, it satisfies $\div Q^{i+1}=0$, it has zero
flux towards infinity, and infinite support. Moreover
\begin{equation}
|E_{n_i}\cap E(Q^{i+1})|\leq |E_{n_i}\cap E(Q^i)|-1\,.\label{dimi}
\end{equation}
Condition \eqref{dimi} implies that $\lim_{i\to
+\infty}n_i=+\infty$. Hence, fixed any $(y,z)\in E$, for $i$ large
it holds
\begin{equation}
Q^i(y,z)\leq M_{n_i-1}(Q^i)\leq \phi^+_{n_i-1}(Q^i)\leq
\phi^+_{n_i-1}(Q) \label{stima}
\end{equation}
(for the first inequality note that $(y,z) \in E_{n_i-1}$ for $i$
large, for the second one use the definition of $n_i$, for the third
one observe that by construction $Q^i \leq Q$).

Since the right hand side of \eqref{stima} converges to zero when $i$
diverges we obtain $\lim_{i\to +\infty} Q^i(y,z)=0$ for any $(y,z)\in
E$. Finally we get
$$
\lim_{i\to +\infty}\Big(Q(y,z)-\sum_{j=0}^im_j\id_{C_j}(y,z)\Big)=
\lim_{i\to +\infty}Q^{i+1}(y,z)=0\,.
$$
This trivially implies that $Q=\sum_{j=0}^{\infty}m_j\id_{C_j}$.
\end{proof}

\begin{remark}\label{zecchino}
It is easy to see that Lemma \ref{t:cicli} remains valid  if the
condition of zero flux towards infinity is satisfied just by the
reduced flow $q\in \bb R_+^E$ defined as
$$
q(y,z):=\left\{
\begin{array}{ll}
Q(y,z) & (z,y)\not \in E\,,\\
Q(y,z)-\min\left\{Q(y,z),Q(z,y)\right\}& (z,y)\in E\,.
\end{array}
\right.
$$
\end{remark}

Given an oriented graph $(\mc V,\mc E)$  with countable $\mc V,\mc
E$ we say that it is connected if for any $y,z\in \mc V$ there exist
$x_1,\dots,x_n$ such that $x_1=y$, $x_n=z$ and $(x_i,x_{i+1})\in \mc
E$, $i=1,\dots, n-1$. To every oriented graph we can associate an
unoriented graph $(\mc V,\mc E^u)$ for which $\left\{y,z\right\}\in
\mc E^u$ if at least one among $(y,z)$ and $(z,y)$ belongs to $\mc
E$. We say that the  unoriented graph  $(\mc V,\mc E^u)$    is
connected if for any $y,z\in \mc V$ there exist $x_1,\dots,x_n$ such
that $x_1=y$, $x_n=z$ and $\left\{x_i,x_{i+1}\right\}\in \mc E^u$,
$i=1,\dots, n-1$.

The following lemma will be useful.

\begin{lemma}
  Let $(\mc V,\mc E)$ be a countable oriented graph.
  Then $(\mc V,\mc E)$ is connected if and only if \emph{(i)}
  $(\mc V,\mc E^u)$ is connected and \emph{(ii)}
  there exists a flow $Q\in L^1_+(\mc E)$ with $Q(y,z)>0$
  for any $(y,z)\in \mc E$ and $\div Q=0$.
\label{tommy}
\end{lemma}

\begin{proof}
Suppose first that $(\mc V,\mc E)$ is connected. Then
$(\mc V,\mc E^u)$ is also trivially connected. To show property (ii),
since $\mc C$ is countable. we can find a sequence $\{\alpha_C, \, C\in \mc C\}$
with $\alpha_C >0$ and $\sum_{C} \alpha_C < +\infty$. We then define
$Q =\sum_{C}  \alpha_C \id_C$ which is summable and
divergence--free. It remains to check that  $Q(y,z)>0$
for any $(y,z)\in \mc E$. Since $(\mc V,\mc E)$ is connected, we can
add to $(y,z)$ an oriented path from $z$ to $y$ obtaining a finite cycle
$C\ni (y,z)$.

We now prove the converse implication. In view of Lemma \ref{t:cicli},
the flow $Q$ in (ii) admits a cyclic decomposition \eqref{c:2}. If
$(y,z)\in \mc E$ then $0< Q(y,z)= \sum_{C\ni (y,z)} \hat Q(C)$. Thus
there exists a finite cycle $C$ containing $(y,z)$, hence there exists
an oriented path from $z$ to $y$. This shows that neighbors in $(\mc
V, \mc E^u)$ are connected in $(\mc V, \mc E)$.
\end{proof}

We can now give the proof of Proposition \ref{gauchito}.

\begin{proof}[Proof of Proposition \ref{gauchito}] $~$

\noindent
  Proof of (i). Fix $(y,z)\in E$ with $Q(y,z)>0$.
  From the definition of $I(\mu,Q)$ and $\Phi$ we deduce that
  $\mu(y) r(y,z)>0$ and therefore $\mu(y)>0$. Since $\div Q=0$
  and the ingoing flow in $z$ is strictly positive then there exists
  $(z,y')\in E$ with $Q (z,y') >0$ hence, by what just proven, $\mu(z)>0$.

\noindent
  Proof of (ii). It  is an immediate
  consequence of Lemma \ref{tommy}.

\noindent
  Proof of (iii).  To
  this aim we first observe that $\div Q_j=0$. Indeed, the following
  property (P) holds: given $y \in V$, if $Q(y,z)>0$ or $Q(z,y)>0$
  then $z $ belongs to the
  same oriented connected component of $y$
  (apply Item (ii)).  This property and the zero divergence of $Q$
  imply that $\div Q_j=0$.
  By definition \eqref{rfq} and Remark \ref{silente},
\begin{equation}\label{ritenzione}
 I(\mu_j, Q_j) = \sum _{(y,z) \in E \cap (K_j \times K_j)}
 \Phi\bigl(Q_j (y,z), Q^{\mu_j}(y,z) \bigr)+
\sum _{(y,z)\in E\cap (K_j \times K_j^c) }  Q^{\mu_j}(y,z)
\,.
\end{equation}
Always property (P) implies
that
\begin{equation}\label{biblio} I(\mu,Q)= \sum_j \left\{ \sum _{(y,z)
      \in E \cap( K_j \times K_j)} \Phi\bigl(Q(y,z), Q^\mu(y,z) \bigr)
    + \sum _{(y,z)\in E\cap (K_j  \times K_j^c) }  Q^\mu (y,z) \right\}.
\end{equation}
To conclude compare \eqref{ritenzione} with \eqref{biblio} using that
$Q(y,z)= \mu(K_j) Q_j(y,z)$ and
$Q^\mu(y,z)= \mu(K_j) Q^{\mu_j} (y,z)$  if $(y,z) \in  E$ with $y \in K_j$.
 \end{proof}

\subsection{An approximation result for the function
$I(\mu,Q)$}
\label{lattino}

Let $\mc S$ be the subset of $ \mc P(V)\times L^1_+(E)$ given by the
elements $(\mu,Q)$ with $I(\mu,Q)<+\infty$ and  such that the graph
$(\supp(\mu), E(Q) )$ is finite   and connected.

\begin{proposition}
  \label{t:dt}
  Fix $(\mu,Q)\in \mc P(V)\times L^1_+(E)$. There exists a sequence
  $\{(\mu_n,Q_n)\}$ in $\mc S$ such that $(\mu_n,Q_n)\to (\mu,Q)$
  in $\mc P(V)\times L^1_+(E)$ and
  \begin{equation}
    \label{dt}
    \varlimsup_{n\to + \infty} \: I(\mu_n,Q_n) \le I(\mu,Q)\,.
  \end{equation}
\end{proposition}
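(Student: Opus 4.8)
The statement to prove is Proposition~\ref{t:dt}: any $(\mu,Q)$ with $I(\mu,Q)<+\infty$ can be approximated in $\mc P(V)\times L^1_+(E)$ by elements of $\mc S$ (finite, connected supporting graph, finite rate) with asymptotically no larger rate function; and if $I(\mu,Q)=+\infty$ the claim is trivial by taking any fixed sequence in $\mc S$, since $\varlimsup I(\mu_n,Q_n)$ can be anything $\le +\infty$. So the content is entirely in the case $I(\mu,Q)<+\infty$, which forces $\div Q=0$ and $\langle\mu,r\rangle<+\infty$, and by Proposition~\ref{gauchito} every edge of $E(Q)$ joins vertices of $\supp(\mu)$.

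\textbf{Step 1: reduce to the connected case via the affine decomposition.} By Proposition~\ref{gauchito}(ii)--(iii), $(\mu,Q)=\sum_{j\in J}\mu(K_j)(\mu_j,Q_j)$ with $I(\mu,Q)=\sum_j \mu(K_j) I(\mu_j,Q_j)$, where each $(\mu_j,Q_j)$ has connected supporting graph $(K_j,E(Q_j))$. Truncating to finitely many $j$'s (the ones carrying most of the $\mu$-mass) and renormalizing gives a convex combination whose rate is a partial sum of a convergent series, hence $\to I(\mu,Q)$, and which converges to $(\mu,Q)$ in the product topology — here one uses that the ball $B_\ell$ in $L^1_+(E)$ is bounded-weak* compact and $\|Q\|=\langle\mu,1\rangle$-type bounds keep everything in a fixed ball, so convergence is just pointwise/weak* convergence on a fixed ball. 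Since $\mc S$ is not quite convex (finiteness of the graph is not preserved by convex combinations with infinitely many terms, but \emph{is} preserved by finite ones), I must be slightly careful: a finite convex combination of finite connected pieces need not be connected, but I only need each \emph{approximant} to eventually lie in $\mc S$. The cleanest route is: first handle a single connected $(\mu,Q)$ (Step 2), then observe the general case follows because a finite convex combination of elements of $\mc S$ can be further approximated by connected elements of $\mc S$ — e.g. by adding, on an added path joining the components, an arbitrarily small circulating flow (a cyclic decomposition move, using Lemma~\ref{t:cicli} / the cycle structure), at arbitrarily small cost in $I$ (since $\Phi(q,p)\to 0$ as $q,p\to 0$ along $\Phi(q,q)=0$, or more simply $\Phi(0,p)=p$ is small).

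\textbf{Step 2: the connected case — truncate the supporting graph.} Assume $(\supp\mu, E(Q))$ is connected (possibly infinite) and $I(\mu,Q)<+\infty$. Pick an invading sequence $V_n\nearrow \supp\mu$ with each $(V_n, E(Q)\cap(V_n\times V_n))$ connected (possible since the graph is connected: grow along a spanning structure). The flow $Q$ restricted to $V_n$ will generally fail to be divergenceless, so I correct it: using Lemma~\ref{t:cicli}, write $Q=\sum_{C}\hat Q(C)\id_C$ as a superposition of self-avoiding finite cycles, and set $Q_n:=\sum_{C\subset E_n}\hat Q(C)\id_C$, the sum over cycles entirely inside $V_n$. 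Then $Q_n\in L^1_+(E)$, $\div Q_n=0$, $Q_n\uparrow Q$ pointwise and $\|Q_n\|\to\|Q\|$, hence $Q_n\to Q$ in $L^1_+(E)$ (monotone $L^1$ convergence, which implies norm convergence and thus bounded-weak* convergence). For the measure, let $\mu_n$ be $\mu$ restricted to the vertex set $W_n$ actually touched by $Q_n$ — I must ensure $\mu_n$ puts positive mass exactly on $\supp\mu\cap(\text{vertices of }Q_n)$ and renormalize; then $\mu_n\to\mu$ weakly. The supporting graph $(\supp\mu_n, E(Q_n))$ is finite and, for $n$ large, connected, so $(\mu_n,Q_n)\in\mc S$ once we check $I<+\infty$, which holds because $I(\mu_n,Q_n)$ is a \emph{finite} sum of finite $\Phi$-terms.

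\textbf{Step 3: the rate estimate $\varlimsup I(\mu_n,Q_n)\le I(\mu,Q)$.} Write $I(\mu_n,Q_n)=\sum_{(y,z)\in E}\Phi(Q_n(y,z),\mu_n(y)r(y,z))$. The main point is a term-by-term comparison plus a dominated/monotone passage to the limit. Since $\mu_n(y)\to\mu(y)$ and, by construction, $\mu_n(y)\ge$ (roughly) $\mu(y)$ up to the normalization factor $\mu(W_n)^{-1}\to1$, and $Q_n(y,z)\to Q(y,z)$ with $Q_n\le Q$, I control $\Phi(Q_n(y,z),\mu_n(y)r(y,z))$ by something summable. Here the convexity/monotonicity properties of $\Phi$ are essential: $\Phi(\cdot,p)$ is convex with minimum $0$ at $q=p$, and $\Phi(q,\cdot)$ is convex decreasing then... — more usefully, $\Phi(q,p)\le q\log(q/p)-(q-p)$ type bounds give, for $Q_n\le Q$ and $\mu_n(y)r(y,z)$ comparable to $\mu(y)r(y,z)$, a bound $\Phi(Q_n(y,z),\mu_n(y)r(y,z))\le C\big(\Phi(Q(y,z),\mu(y)r(y,z)) + \mu(y)r(y,z)\id_{\{(y,z)\in E_n\setminus E(Q_n)\}}\big)$ or similar; the error terms are tails of the convergent series $\sum\mu(y)r(y,z)=\langle\mu,r\rangle<+\infty$ and of $I(\mu,Q)<+\infty$, hence $\to0$. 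Then Fatou in reverse (dominated convergence) gives $\limsup_n I(\mu_n,Q_n)\le\sum_{(y,z)}\lim_n\Phi(Q_n(y,z),\mu_n(y)r(y,z)) = I(\mu,Q)$, using lower semicontinuity/continuity of $\Phi$ at the relevant points (continuity holds wherever $p>0$; where $p=0$ and $q=0$, $\Phi=0$ and stays $0$).

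\textbf{Expected main obstacle.} The genuinely delicate part is Step 3, specifically getting a \emph{uniform summable domination} for $\Phi(Q_n(y,z),\mu_n(y)r(y,z))$ that survives the normalization $\mu_n=\mu_{|W_n}/\mu(W_n)$: because dividing $\mu$ by $\mu(W_n)<1$ makes $Q^{\mu_n}$ slightly \emph{larger} than $Q^\mu$, one has to verify $\Phi(q, \alpha p)$ does not blow up relative to $\Phi(q,p)$ as $\alpha\to1^+$, which is fine but needs the explicit form of $\Phi$ and the fact that $Q_n$ is supported where $\mu_n$ is. A secondary nuisance is arranging, simultaneously, that (a) the truncated flow $Q_n$ stays divergenceless — handled by the cyclic decomposition of Lemma~\ref{t:cicli} rather than naive restriction — and (b) the vertex set of $Q_n$ exhausts $\supp\mu$ so that $\mu_n\to\mu$; reconciling these may require interleaving the two invading sequences (first enlarge $V_n$ to include new $\mu$-mass, then enlarge again to capture enough cycles). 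Finally, the stitching in Step 1 to restore connectedness of finite convex combinations must be done so that the added circulating flow has cost $o(1)$; this is where I would lean on $\Phi(0,p)=p$ and choose the auxiliary path's flow values tending to $0$.
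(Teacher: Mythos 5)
Your plan identifies the right tools (the cyclic decomposition of Lemma~\ref{t:cicli} to truncate while preserving $\div Q=0$, and a small ``stitching'' flow to restore connectivity at vanishing $I$-cost), and the spirit is the same as the paper's. But there is a genuine gap in how you try to combine truncation with connectivity in Step~2. You set $\mu_n$ to be $\mu$ restricted to the vertex set $W_n$ touched by $Q_n:=\sum_{C\subset E_n}\hat Q(C)\id_C$, and then assert that $(\supp\mu_n, E(Q_n))$ ``is finite and, for $n$ large, connected.'' This is unjustified: even if you choose $V_n$ so that $(V_n, E(Q)\cap(V_n\times V_n))$ is connected, the edge set $E(Q_n)$ only retains edges lying on cycles \emph{entirely} contained in $E_n$, which may be a strict subset of $E(Q)\cap(V_n\times V_n)$, and the resulting graph can have several components for every $n$. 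Moreover, there is a built-in tension between keeping $\mu_n$ supported on $W_n$ (to get connectivity and $(\mu_n,Q_n)\in\mc S$) and having $W_n\nearrow\supp\mu$ so that $\mu_n\to\mu$ weakly; you flag this yourself (``interleaving the two invading sequences'') but never resolve it.

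The paper avoids this by decoupling the two concerns. It takes $\mu_n:=\mu|_{V_n}/\mu(V_n)$ and $Q_n:=\sum_{C\subset E_n}\hat Q(C)\id_C$ with \emph{no} attempt to align $\supp\mu_n$ with the support of $Q_n$; the resulting $(\mu_n,Q_n)$ lands in a larger set $\mc S^*$ (finite support, divergenceless, not necessarily connected). It then shows, separately, that $\mc S$ is $I$-dense in $\mc S^*$: given $(\mu,Q)\in\mc S^*$ with components $K_1,\dots,K_m$, fix connecting paths $\gamma_{i,j}$ in $(V,E)$, build the finite connected graph $(\supp\mu\cup\bar V,\, E(Q)\cup\bar E)$, take the (unit-rate) irreducible chain on it with invariant measure $\pi^*$ and flow $Q^*(y,z)=\pi^*(y)$, and approximate by $\epsilon(\pi^*,Q^*)+(1-\epsilon)(\mu,Q)\in\mc S$; since this involves only a finite sum for $I$, the convergence $I(\epsilon(\pi^*,Q^*)+(1-\epsilon)(\mu,Q))\to I(\mu,Q)$ is elementary. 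Your ``stitching'' idea is morally the same, but the paper's version is concrete and makes the cost estimate trivial, whereas yours is left at the level of an appeal to $\Phi(0,p)=p$.

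Finally, your Step~3 is too sketchy to close. The paper proves the precise elementary inequality: for $0\le q'\le q$ and $p'\ge p>0$, writing $q'=q(1-\alpha)$ and $p'=p(1+\beta)$, one has $\Phi(q',p')-\Phi(q,p)\le(\alpha+\beta)p$. Applying this with $q=Q(y,z)$, $q'=Q_n(y,z)\le Q(y,z)$, $p=Q^\mu(y,z)$, $p'=Q^{\mu_n}(y,z)=Q^\mu(y,z)/\mu(V_n)$ gives $I(\mu_n,Q_n)\le I(\mu,Q)+\sum(\beta_n+\alpha_n(y,z))\mu(y)r(y,z)$ with the error tending to zero by dominated convergence (using $\langle\mu,r\rangle<\infty$). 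Your Step~3 gestures at a bound ``or similar'' without producing one; the specific one-sided monotonicity and the identification of the dominating summable function are the whole content of that step. Your Step~1 detour through Proposition~\ref{gauchito} and truncating components is also not needed once you adopt the paper's two-stage structure.
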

As proven below, the convergence $(\mu_n,Q_n)\to (\mu,Q)$ in $\mc
P(V)\times L^1_+(E)$ holds also with $L^1_+(E)$ endowed with the
$L^1$--norm (strong topology).

\begin{proof}
  We consider only elements $(\mu,Q)$ such that $I(\mu,Q)<+\infty$,
  otherwise the thesis is trivially true. In particular, 
  $\div Q=0$.
  Denote by $\mathcal S^*$ the set of elements $(\mu,Q) \in \mathcal
  P(V)\times L^1_+(E)$ with finite support (i.e.\ with finite
  $\supp(\mu)$ and $E(Q)$) and $\div Q=0$.
  We first show that \eqref{dt} holds for $(\mu,Q)\in\mathcal S^*$

  Let $(\mu,Q)\in \mc S^*$. Then there exists a
  finite connected oriented subgraph $(V^*,E^*)$ of $(V,E)$ which contains
  $(\supp(\mu), E(Q))$ (add to $(\supp(\mu), E(Q))$
  suitable paths joining the connected components of  $(\supp(\mu),
  E(Q))$\,). Denote by $r^*$ the restriction of $r$ to $E^*$ and let
  $\pi^*$ be the (unique) invariant probability of the chain with rates
  $r^*$ on the graph $(V^*,E^*)$. Set also $Q^*(y,z) := \pi^*(y)
  r^*(y,z)$ and extend $\pi^*$, $Q^*$ to functions on $V$, $E$ by
  setting them equal to zero outside $V^*$, $E^*$.
  Due to the invariance of $\pi^*$,   $\div Q^*=0$.
  Moreover, it holds
  \begin{equation*}
    I(\pi^*,Q^*) = \sum_{(y,z)\in E^*} \Phi(Q^*(y,z), \pi^*(y) r(y,z))
    +   \sum_{(y,z)\not\in E^* }\pi^*(y) r(y,z)\,.
  \end{equation*}
  As the first sum is a finite sum of finite terms and the second one
  is bounded by $\langle\pi^*,r\rangle$, we deduce $I(\pi^*,Q^*)<+\infty$.
  We define the sequence
  $\big\{(1-\frac 1n )(\mu, Q) +  \frac 1n (\pi^*,Q^*)\big\}$ which belongs
  to $\mathcal S$ and converges to  $(\mu,Q)$ (even
  with $L^1_+(E)$ endowed of the strong topology).
  By the convexity of $I$ stated in Remark \ref{silente}
$$
\varlimsup_{n \to \infty}I\Bigl(
(1- \frac 1n) (\mu,Q)+\frac 1n (\pi^*,Q^*) \Bigr)\le I(\mu,Q)\,.
$$

  Given $(\mu,Q) \in \mathcal P(V)\times L^1_+(E)$ with $\div Q=0$,
  we now show that there exists a sequence $\{(\mu_n, Q_n )\}\subset \mc
  S^*$ such that \eqref{dt} holds. The thesis then follows by a
  diagonal argument.
  We fix $(\mu,Q)$ with $\div Q=0$ and $I(\mu,Q)<+\infty$.  By
  Lemma \ref{t:cicli} the cyclic decomposition \eqref{c:2} of $Q$
  holds.  We fix an invading sequence $V_n\nearrow V$ of finite
  subsets and call $E_n$ the edges in $E$ connecting vertices in $V_n$
  (recall \eqref{nevischio}).  Finally, we construct the sequence
  $(\mu_n,Q_n)\in \mathcal S^*$ by
$$
\mu_n:=\frac{\mu|_{V_n}}{\mu(V_n)}\,, \qquad \qquad
Q_n:=\sum_{\left\{C\in \mathcal C\,:\, C\subset E_n\right\}}\hat
Q(C)\id_C\,.
$$
For $n$ large $\mu(V_n)>0$ and the definition is well posed. Clearly
$(\mu_n,Q_n)$ converges to $(\mu,Q)$  (also considering the strong
topology of  $L^1_+(E)$).
 It remains to show \eqref{dt}. By construction $\div Q_n=0$ and $\langle \mu_n, r \rangle <+\infty$,
  hence, recalling \eqref{rfq},
  \begin{equation}
    \label{sum}
    I(\mu_n,Q_n) = \sum_{y\in V_n\, z\in V: (y,z)\in E} \Phi\big( Q_n(y,z), Q^{\mu_n}(y,z)\big).
  \end{equation}

  We claim that $\Phi\big( Q_n(y,z), Q^{\mu_n}(y,z)\big)=0$ if $(y,z)$
  is as in the above sum and $Q^{\mu_n}(y,z)=0$. Since $y\in V_n$ then
  $Q^\mu(y,z)= \mu(V_n) Q^{\mu_n}(y,z) = 0$.  As $I(\mu,Q)<+\infty$ it
  follows $Q(y,z)=0$ and therefore $Q_n(y,z)=0$, which concludes the
  proof of the claim.  As a consequence, we can restrict the sum in
  \eqref{sum} to $Q^{\mu_n}(y,z)>0$

  Recall the definition of $\Phi $ given in \eqref{Phi}. Given $0\le
  q'\le q$ and $p'\ge p> 0$, let $\alpha,\beta\ge 0$ be respectively
  defined by $q'=q(1-\alpha)$ and $p'=p(1+\beta)$. Then we have
  \begin{equation}
    \label{acqua74}
    \begin{split}
      &
      \Phi(q',p') - \Phi(q,p)
      = q' \Big( \log\frac{q'}{p'} - \log\frac{q}{p}\Big)
      + (q'-q) \log\frac qp + (q-q')+ (p'-p)
      \\
      &\qquad
      \le (q'-q) \log\frac qp + (q-q')+ (p'-p)
      = - \alpha \,\Phi(q,p) +(\alpha+\beta) \, p
      \le (\alpha+\beta) \, p.
    \end{split}
  \end{equation}
  By construction, it holds $\mu_n(y) \ge \mu(y)$ for $y\in V_n$
  and $Q_n(y,z)\le Q(y,z)$ for $(y,z)\in E_n$.
  We set $\beta_n:= [\mu(V_n)]^{-1} -1$ and $\alpha_n\colon E_n
  \to [0,1]$ be defined by $Q_n(y,z) =Q(y,z)
  \big[1-\alpha_n(y,z)\big]$ when $(y,z)\in E(Q)$.
  From \eqref{acqua74} we then obtain
  \begin{equation*}
    I(\mu_n,Q_n)
    \le I(\mu,Q)
    +  \sum_{y\in V_n\, z\in V: (y,z)\in E} \big[\beta_n+
    \alpha_n(y,z)\big]\,\mu(y)r(y,z).
  \end{equation*}
  Since $I(\mu,Q)<+\infty$ then it
  holds $\langle\mu,r\rangle <+\infty$.    Since
  $\beta_n, \alpha_n(y,z)\downarrow 0$ and the maps $\alpha_n(\cdot) $
  are uniformly bounded, by dominated convergence we conclude the
  proof of \eqref{dt}.
\end{proof}


\section{Direct proof of Theorem
\ref{LDP:misura+flusso}}\label{s:diretto}

In this section we give a direct proof of Theorem
\ref{LDP:misura+flusso}, independent from the LDP for the empirical
process. As already mentioned, the proof works only  under the
additional condition that the graph $(V,E)$ is \emph{locally finite}
(cf.\ Condition \ref{t:ccompls}--(iii)).
This assumption implies that, given $\phi \in C_0(V)$, the function
$\nabla \phi: E \to \bb R$ defined as $\nabla \phi (y,z)=
\phi(y)-\phi(z)$ belongs to $C_0(E)$. As a consequence, the map
 \begin{equation}\label{fanta}L^1_+ (E)\ni Q \to \langle\phi,\div Q\rangle=-
 \langle  \nabla \phi, Q \rangle \in \bb R
 \end{equation} is continuous. Since a
 linear functional on $L^1_+ (E)$ is  continuous w.r.t. the bounded
 weak* topology if and only if it is continuous w.r.t. the weak*
 topology \cite{Me}, by definition of weak* topology the map defined in
 \eqref{fanta} is continuous (w.r.t. the bounded weak* topology)
 if and only if $\nabla \phi \in C_0(E)$. Hence, our additional condition is equivalent
 to the fact that  \eqref{fanta}
 is  continuous for any $\phi \in C_0(V)$.
 An explicit example of a not locally finite graph
 where \eqref{fanta} becomes not continuous for $\phi= \id_x $, $x
 \in V$, is given in Appendix \ref{div_disco}.

\subsection{Upper bound}
\label{s:ub}

Given $\phi\in C_0(V)$ and $F\in C_c(E)$ (i.e.\  $\phi$ vanishes at
infinity and $F$ is nonzero only on a finite set) let
$I_{\phi,F}\colon \mc P(V)\times L^1_+(E)\to \bb R$ be the map
defined by
\begin{equation}
  \label{Iff}
  I_{\phi,F} (\mu,Q) :=
  \langle\phi,\div Q\rangle + \langle Q, F\rangle - \langle\mu,r^F-r\rangle
\end{equation}
where $r^F\colon V \to (0,+\infty)$ is defined by $r^F(y)
=\sum_{z\in V} r(y,z)e^{F(y,z)}$ and $\langle\phi,\div
Q\rangle=\sum_{y\in V} \phi(y)\div Q(y) $.

\begin{lemma}
  \label{t:pub}
  Fix $x\in V$. For each $\phi\in C_0(V)$, $F\in C_c(E)$, and each
  measurable $\mc B\subset \mc P(V)\times L^1_+(E)$, it holds
  \begin{equation*}
    \varlimsup_{T\to+\infty}\; \frac 1T
    \log \bb  P_x \Big( (\mu_T,Q_T) \in \mc B \Big)
    \le -\inf_{(\mu,Q)\in \mc B}  I_{\phi,F} (\mu,Q).
  \end{equation*}
\end{lemma}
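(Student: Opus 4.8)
The plan is to obtain the bound as a consequence of the exponential supermartingale estimate in Lemma~\ref{t:em2} together with a change of measure computed via the Radon--Nikodym derivative \eqref{RN}, essentially the standard tilting argument for upper bounds. First I would observe that, for the chosen $\phi\in C_0(V)$ and $F\in C_c(E)$, the function on paths
\begin{equation}
  \label{prop:tilt}
  T\,I_{\phi,F}(\mu_T,Q_T)
  = T\big[\langle\phi,\div Q_T\rangle + \langle Q_T,F\rangle - \langle\mu_T,r^F-r\rangle\big]
\end{equation}
can be written, up to the boundary term $\log[u(X_T)/u(X_0)]$ coming from $\phi$, as the exponent appearing in an exponential (super)martingale. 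Concretely, set $u:=e^{-\phi}$; since $\phi\in C_0(V)$ the function $u$ is bounded above and below by positive constants, so $\log[u(X_T)/u(X_0)]$ is uniformly bounded, and moreover using the additional local-finiteness hypothesis $\langle\phi,\div Q_T\rangle=-\langle\nabla\phi,Q_T\rangle$ with $\nabla\phi\in C_0(E)$. The product $\mc M^u_T\cdot\bb M^F_T$ — equivalently a single exponential martingale with tilted rates $\hat r(y,z)=r(y,z)\,e^{F(y,z)}\,u(z)/u(y)$ — has $\bb E_x$-expectation at most $1$ by Lemma~\ref{t:em2} (after checking $\hat r(y)<+\infty$ for all $y$, which holds since $F$ has compact support and $u$ is bounded). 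Its exponent is exactly $T\,I_{\phi,F}(\mu_T,Q_T)$ plus the bounded boundary term.

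Next I would run the Chebyshev/change-of-measure step: for the event $\{(\mu_T,Q_T)\in\mc B\}$ write
\begin{equation*}
  \bb P_x\big((\mu_T,Q_T)\in\mc B\big)
  = \bb E_x\Big( e^{-T\,I_{\phi,F}(\mu_T,Q_T)}\,\cdot\,e^{T\,I_{\phi,F}(\mu_T,Q_T)}\,\id_{\{(\mu_T,Q_T)\in\mc B\}}\Big)
\end{equation*}
and bound the first factor on the event by $\exp\{-T\inf_{(\mu,Q)\in\mc B}I_{\phi,F}(\mu,Q)+O(1)\}$, where the $O(1)$ absorbs the bounded boundary term $\log[u(X_T)/u(X_0)]$; the remaining expectation $\bb E_x\big(\text{exp. martingale}\big)$ is at most $1$ (or $e^{O(1)}$ once the boundary term is pulled out carefully). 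Taking $\frac1T\log$ and $\varlimsup_{T\to+\infty}$ kills the $O(1)/T$ and yields the claim. One should be slightly careful that $I_{\phi,F}$ may take the value $+\infty$ when $\langle\mu,r\rangle=+\infty$ or $\langle\mu,r^F\rangle=+\infty$; but on the event only $(\mu_T,Q_T)$ with $\langle\mu_T,r\rangle<+\infty$ occur $\bb P_x$-a.s.\ (the chain is non-explosive, so $Q_T\in L^1_+(E)$), and $r^F-r$ is bounded since $F$ has compact support, so $I_{\phi,F}(\mu_T,Q_T)$ is finite along the relevant paths and the infimum over $\mc B$ is a legitimate (possibly $+\infty$) lower bound for the exponent.

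The only genuinely delicate point is the bookkeeping of the two exponential factors $\mc M^u$ and $\bb M^F$: rather than multiplying two separate supermartingales (whose product need not have expectation $\le1$), it is cleanest to apply Lemma~\ref{t:em2} \emph{once}, directly to the single tilt $F'(y,z):=F(y,z)+\log[u(z)/u(y)]=F(y,z)-\nabla\phi(y,z)$, noting $\langle Q_T,F'\rangle=\langle Q_T,F\rangle-\langle Q_T,\nabla\phi\rangle=\langle Q_T,F\rangle+\langle\phi,\div Q_T\rangle$ and $r^{F'}(y)=\sum_z r(y,z)e^{F(y,z)}u(z)/u(y)$, which is finite for every $y$ by local finiteness and boundedness of $u$ (this is exactly where the extra assumption of Section~\ref{s:diretto} is used). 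Then $\bb M^{F'}_T=\exp\{T[\langle Q_T,F'\rangle-\langle\mu_T,r^{F'}-r\rangle]\}$ and one checks $\langle\mu_T,r^{F'}-r\rangle = \langle\mu_T,r^F-r\rangle + \langle\mu_T,(r^{F'}-r^F)\rangle$; the last correction together with the boundary term $u(X_T)/u(X_0)$ recombines into $I_{\phi,F}(\mu_T,Q_T)$ up to an $O(1)$ term, because $r^{F'}(y)-r^F(y)=\sum_z r(y,z)e^{F(y,z)}[u(z)/u(y)-1]$ and a summation-by-parts identity relates $\langle\mu_T,r^{F'}-r^F\rangle$ to $\langle Q_T^{\mu_T},\nabla\phi\rangle$-type quantities. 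I expect this algebraic reconciliation to be the main thing to get right; the probabilistic content is entirely contained in Lemma~\ref{t:em2}.
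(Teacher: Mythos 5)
Your high-level plan — Chebyshev with a change of measure driven by Lemma~\ref{t:em2} — is the correct skeleton, but the specific way you handle the divergence term introduces a gap that does not disappear. You propose to merge the $\phi$-part and the $F$-part into a single tilt $F'(y,z)=F(y,z)+\log[u(z)/u(y)]$ and apply Lemma~\ref{t:em2} once to $F'$. The problem is that the exponent of $\bb M^{F'}_T$ is $T[\langle Q_T,F'\rangle-\langle\mu_T,r^{F'}-r\rangle]$, whereas the exponent you want is $T I_{\phi,F}(\mu_T,Q_T)=T[\langle\phi,\div Q_T\rangle+\langle Q_T,F\rangle-\langle\mu_T,r^{F}-r\rangle]$. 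Using the telescoping identity $T\langle Q_T,\nabla\phi\rangle=\phi(X_0)-\phi(X_T)$ (which is the pathwise form of $\langle\phi,\div Q_T\rangle$), the two exponents agree on the flow terms but differ by $-T\langle\mu_T,r^{F'}-r^{F}\rangle$, where $r^{F'}(y)-r^{F}(y)=\sum_z r(y,z)e^{F(y,z)}\big[u(z)/u(y)-1\big]$. This correction is proportional to $T$, not $O(1)$, and for an infinite state space it is not even uniformly bounded over $y$; your assertion that a summation-by-parts ``recombines it up to an $O(1)$ term'' does not hold, because that identity only trades the $\mu_T$-weighted rate term against a $Q_T$-weighted gradient term — it does not make either of them bounded.

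The fix the paper uses is both simpler and exact: do \emph{not} absorb the $\phi$-tilt into the supermartingale at all. The pathwise continuity equation $\delta_y(X_T)-\delta_y(X_0)+T\div Q_T(y)=0$ (valid $\bb P_x$-a.s., no local finiteness required) gives $T\langle\phi,\div Q_T\rangle=-[\phi(X_T)-\phi(X_0)]$ exactly, a bounded boundary term. Hence $e^{T I_{\phi,F}(\mu_T,Q_T)}=e^{-[\phi(X_T)-\phi(X_0)]}\,\bb M^F_T$ with no correction term whatsoever, and one concludes with $\bb E_x(\bb M^F_T)\le 1$ and boundedness of $\phi$. You already wrote down the relevant telescope when you noted $\langle Q_T,\nabla\phi\rangle$ relates to $\langle\phi,\div Q_T\rangle$; the missing step was to recognize that $T\langle\phi,\div Q_T\rangle$ is therefore itself $O(1)$ and should simply be factored out, rather than baked into a new tilt. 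As a side remark, the local-finiteness hypothesis is not used in this lemma; it enters only afterwards, to ensure continuity of $(\mu,Q)\mapsto I_{\phi,F}(\mu,Q)$ so that the min-max lemma can be applied.
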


\begin{proof}
  Fix $x\in V$ and observe that the following pathways continuity
  equation holds $\bb P_x$ a.s.\
  \begin{equation}
    \label{pce}
    \delta_y(X_T)-\delta_y(X_0) + T \div Q_T(X) (y) =0
    \qquad \forall\: y\in V.
  \end{equation}

  Fix $F\in C_c(E)$ and $\phi\in C_0(V)$ and recall the semimartingale $\bb M^F$ introduced in Lemma~\ref{t:em2}. In view of
  \eqref{Iff} and \eqref{pce}, for each $T>0$ and each measurable set
  $\mc B\subset \mc P(V)\times L^1_+(E)$
  \begin{equation*}
    \begin{split}
      &\bb P_x \big( (\mu_T,Q_T) \in \mc B \big)
      \\
      &\qquad
      = \bb E_x \Big(
      \exp\big\{ -T \, I_{\phi,F} (\mu_T,Q_T) -
      \big[ \phi(X_T)-\phi(x) \big] \big\}
      \: \bb M_T^F \: \id_{\mc B}(\mu_T,Q_T) \Big)
      \\
      &\qquad \le
      \sup_{(\mu,Q)\in\mc B} e^{- T \, I_{\phi,F} (\mu,Q) }
      \; \bb E_x \Big(
      \exp\big\{- \big[\phi(X_T)-\phi(x)\big] \big\}
      \: \bb M_T^F \: \id_{\mc B}(\mu_T,Q_T) \Big).
    \end{split}
  \end{equation*}
  Since $\phi$ is bounded, the proof is now achieved by using
  Lemma~\ref{t:em2}.
\end{proof}

We can conclude the proof of the upper bound in
Theorem~\ref{LDP:misura+flusso}.  In view of the exponential tightness
proven in Subsection~\ref{s:et}, it is enough to prove \eqref{ubldp}
for compacts.  Since the graph $(V,E)$ is locally finite the map
$I_{\phi,F}$ is continuous.
Fix $x\in V$. By Lemma~\ref{t:pub} and the min-max lemma in
\cite[App.~2, Lemma~3.3]{KL} for each compact $\mc K\subset \mc
P(V)\times L^1_+(E)$ it holds
  \begin{equation*}
    \varlimsup_{T\to+\infty}\;
    \frac 1T \log \bb  P_x \Big( (\mu_T,Q_T) \in \mc K \Big)
    \le -\inf_{(\mu,Q)\in \mc K} \; \sup_{\phi,F} \; I_{\phi,F}(\mu,Q)
  \end{equation*}
  where the supremum is carried out over all $\phi\in C_0(V)$ and
  $F\in C_c(E)$. Recalling \eqref{rfq}, it is now simple to check (see Appendix \ref{iobimbo}) that
  for each $(\mu,Q)\in \mc P(V)\times L^1_+(E)$ it holds
  \begin{equation}
    \label{vci}
    I(\mu,Q) = \sup_{\phi,F} \; I_{\phi,F} (\mu,Q)\,,
  \end{equation}
  which concludes the proof of the upper bound.


\subsection{Lower bound}
\label{s:lb}

Recall the following general result concerning the large deviation
lower bound.

\begin{lemma}
  Let $\{P_n\}$ be a sequence of probability measures on a completely
  regular topological space $\mc X$. Fix $J\colon \mc X\to
  [0,+\infty]$ and assume that for each $x\in\mc X$
  there exists a sequence of probability measures $\{\tilde{P}_n^x\}$
  weakly convergent to $\delta_x$ and such that
  \begin{equation}
    \label{entb}
    \varlimsup_{n\to\infty} \frac 1n \Ent\big(\tilde{P}_n^x \big| P_n\big)
    \le J(x).
  \end{equation}
  Then the sequence $\{P_n\}$ satisfies the large deviation lower
  bound with rate function given by $\sce J$, the lower semicontinuous
  envelope of $J$, i.e.
  \begin{equation*}
    (\sce J) \, (x) := \sup_{U \in\mc N_x} \; \inf_{y\in U} \; J(y)
  \end{equation*}
  where $\mc N_x$ denotes the collection of the open neighborhoods of $x$.
\end{lemma}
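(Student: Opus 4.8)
The plan is to establish the large deviation lower bound via the standard tilting argument combined with Varadhan-type reasoning, following the approach of Donsker-Varadhan but adapted to the joint empirical measure/flow setting. First I would reduce the statement: by a standard measure-theoretic fact (see e.g.\ \cite[App.~4, Lemma~3.2]{KL} or \cite{DZ}), to verify the large deviation lower bound $\varliminf_n \frac1n\log P_n(\mc A)\ge -\inf_{x\in\mc A}(\sce J)(x)$ for every open $\mc A$, it suffices to show that for every $x$ and every open neighborhood $U$ of $x$ one has $\varliminf_n\frac1n\log P_n(U)\ge -J(x)$. So fix $x\in\mc X$ and an open neighborhood $U\ni x$; we may assume $J(x)<+\infty$, otherwise there is nothing to prove, and then $\tilde P_n^x$ is a genuine sequence of probability measures with $\Ent(\tilde P_n^x|P_n)<+\infty$ eventually, hence $\tilde P_n^x\ll P_n$.

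The key step is the entropy inequality. Writing $f_n:=d\tilde P_n^x/dP_n$ and applying the basic variational bound for relative entropy, for any measurable set $A$ with $\tilde P_n^x(A)>0$ one has
\begin{equation*}
  \Ent(\tilde P_n^x|P_n)\ge \tilde P_n^x(A)\,\log\frac{\tilde P_n^x(A)}{P_n(A)} - \frac1e\,,
\end{equation*}
or more cleanly, using that $t\log t\ge -1/e$,
\begin{equation*}
  P_n(A)\ge \tilde P_n^x(A)\,\exp\Big(-\frac{\Ent(\tilde P_n^x|P_n)+e^{-1}}{\tilde P_n^x(A)}\Big)\,.
\end{equation*}
Applying this with $A=U$ and taking logarithms,
\begin{equation*}
  \frac1n\log P_n(U)\ge \frac1n\log\tilde P_n^x(U) - \frac{1}{\tilde P_n^x(U)}\Big(\frac1n\Ent(\tilde P_n^x|P_n)+\frac{1}{ne}\Big)\,.
\end{equation*}
Since $\tilde P_n^x\to\delta_x$ weakly and $U$ is an open neighborhood of $x$, the portmanteau theorem gives $\varliminf_n \tilde P_n^x(U)\ge \delta_x(U)=1$, so $\tilde P_n^x(U)\to 1$; in particular $\frac1n\log\tilde P_n^x(U)\to 0$. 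Combining this with the hypothesis $\varlimsup_n\frac1n\Ent(\tilde P_n^x|P_n)\le J(x)$, we obtain $\varliminf_n\frac1n\log P_n(U)\ge -J(x)$, as desired.

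Finally I would pass from the neighborhood estimate to the lower bound with rate function $\sce J$: given any open $\mc A$ and any $x\in\mc A$, taking $U=\mc A$ in the above gives $\varliminf_n\frac1n\log P_n(\mc A)\ge -J(x)$, hence $\ge -\inf_{x\in\mc A}J(x)$; and since for open $\mc A$ one has $\inf_{x\in\mc A}J(x)=\inf_{x\in\mc A}(\sce J)(x)$ (the lower semicontinuous envelope agrees with $J$ on the level of infima over open sets, which is precisely the content of the formula $(\sce J)(x)=\sup_{U\in\mc N_x}\inf_{y\in U}J(y)$), the claim follows. The only mild subtlety, and the point I would be most careful about, is the measurability and weak-convergence bookkeeping on the completely regular (not necessarily metrizable) space $\mc X$ — specifically ensuring the portmanteau inequality $\varliminf_n\tilde P_n^x(U)\ge 1$ for open $U$ is available in this generality, which it is since weak convergence on any topological space already yields $\varliminf P_n(U)\ge P(U)$ for open $U$; no metrizability is needed for that half of the portmanteau theorem. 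Everything else is the routine tilting computation above.
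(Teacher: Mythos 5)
Your argument is correct, and it is exactly the standard tilting/relative-entropy argument that the paper itself does not spell out but instead refers to the reader (Jensen's thesis, Prop.~4.1, and Mariani's preprint, Prop.~1.2.4). The reduction to a single open neighborhood of each point, the data-processing inequality $\Ent(\tilde P^x_n\mid P_n)\ge \tilde P^x_n(U)\log\frac{\tilde P^x_n(U)}{P_n(U)}-e^{-1}$, and the observation that complete regularity supplies a Urysohn function $f$ with $f(x)=1$ and $0\le f\le\id_U$ so that $\tilde P^x_n(U)\ge\langle\tilde P^x_n,f\rangle\to 1$ without any metrizability, together with $\inf_{\mc A}J=\inf_{\mc A}\sce J$ on open sets, is precisely the chain of ideas in those references; nothing is missing.
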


This lemma has been originally proven in \cite[Prop.~4.1]{Je} in a
Polish space setting. The proof given in \cite[Prop.~1.2.4]{Ma}
applies also to the present setting of a completely regular
topological space.

Recall the definition of the set $\mc S$ given before Proposition
\ref{t:dt}:  $\mc S$ is given by the  elements $(\mu,Q)\in \mc
P(V)\times L^1_+(E)$ with $I(\mu,Q)<+\infty$ and  such that the
graph $(\supp(\mu), E(Q) )$ is finite   and connected.

 First we prove the entropy bound \eqref{entb} with $J$
given by the restriction of $I$, as defined in \eqref{rfq}, to $\mc
S$, that is
\begin{equation}
  \label{Jrf}
  J(\mu,Q):=
  \begin{cases}
    I(\mu,Q)  & \textrm{ if } (\mu,Q)\in\mc S\\
    +\infty   & \textrm{ otherwise. }
  \end{cases}
\end{equation}
Then we complete the proof of the lower bound \eqref{lbldp} by
showing that the lower semicontinuous envelope of $J$ coincides with
$I$.

\begin{lemma}
  \label{t:plb}
  Fix $x\in V$ and set $P_T := \bb P_x \circ (\mu_T,Q_T)^{-1}$.  For
  each $(\mu,Q)\in \mc P(V)\times L^1_+(E)$ there exists a sequence
  $\{\tilde{P}^{(\mu,Q)}_T\}$ of probability measures on $\mc P(V)\times
  L^1_+(E)$ weakly convergent to $\delta_{(\mu,Q)}$ and such that
  \begin{equation*}
    \varlimsup_{T\to+\infty} \frac 1T
    \Ent\big(\tilde{P}^{(\mu,Q)}_T \big| P_T \big)
    \le J(\mu,Q).
  \end{equation*}
\end{lemma}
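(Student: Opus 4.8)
The plan is to build, for each $(\mu,Q)\in\mc S$, a \emph{perturbed Markov chain} whose empirical measure and flow concentrate on $(\mu,Q)$, and to use it as the tilted measure $\tilde P^{(\mu,Q)}_T$. Since $(\mu,Q)\in\mc S$, the graph $(\supp(\mu),E(Q))$ is finite and connected, $\div Q=0$, and $I(\mu,Q)<+\infty$; in particular $Q(y,z)>0$ exactly on $E(Q)$ and $\mu(y)>0$ on the finitely many vertices of $\supp(\mu)$. Define new jump rates $\hat r(y,z):=Q(y,z)/\mu(y)$ for $(y,z)\in E(Q)$ and $\hat r(y,z):=0$ otherwise. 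Because the graph is finite and connected, $\hat\xi$ is an irreducible continuous-time Markov chain on $\supp(\mu)$ with no explosion, and one checks from $\div Q=0$ together with \eqref{invariante} that $\mu$ is its (unique) invariant measure: indeed $\sum_z\mu(y)\hat r(y,z)-\sum_z\mu(z)\hat r(z,y)=\sum_z Q(y,z)-\sum_z Q(z,y)=\div Q(y)=0$. Let $\hat{\bb P}_x$ be its law and set $\tilde P^{(\mu,Q)}_T:=\hat{\bb P}_x\circ(\mu_T,Q_T)^{-1}$ — note $x$ need not lie in $\supp(\mu)$, but one can absorb the finitely many steps needed to enter $\supp(\mu)$ at negligible entropy cost, or simply start $\hat\xi$ from a fixed point of $\supp(\mu)$ and compare; this is a routine but slightly fussy point.

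The convergence $\tilde P^{(\mu,Q)}_T\to\delta_{(\mu,Q)}$ is the law of large numbers for the perturbed chain: by the ergodic theorem \eqref{ergodico} applied to $\hat\xi$, $\mu_T\to\mu$ weakly, and by the martingale argument of \eqref{dacitare}--\eqref{montecarlo} (equivalently Remark~\ref{willy} applied to $\hat\xi$), $Q_T(y,z)\to\mu(y)\hat r(y,z)=Q(y,z)$ in probability for each of the finitely many edges; since $E(Q)$ is finite this gives convergence in $L^1_+(E)$, hence weak convergence of the laws to $\delta_{(\mu,Q)}$.

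For the entropy bound I would compute $\Ent(\tilde P^{(\mu,Q)}_T\mid P_T)$ via the Radon--Nikodym derivative \eqref{RN} of $\hat{\bb P}_x\circ\rho_T^{-1}$ with respect to $\bb P_x\circ\rho_T^{-1}$, which is legitimate because on $D([0,T],V)$ no explosion occurs and $\hat r(y,z)=0$ whenever $r(y,z)=0$ (this uses $(\mu,Q)\in\mc S$, so $I(\mu,Q)<\infty$ forces $Q(y,z)>0\Rightarrow r(y,z)>0$). By the data-processing inequality, $\Ent(\tilde P^{(\mu,Q)}_T\mid P_T)\le\Ent(\hat{\bb P}_x\circ\rho_T^{-1}\mid \bb P_x\circ\rho_T^{-1})=\hat{\bb E}_x\big[\log\frac{d\hat{\bb P}_x\circ\rho_T^{-1}}{d\bb P_x\circ\rho_T^{-1}}\big]$. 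Plugging in \eqref{RN} this equals
\begin{equation*}
\hat{\bb E}_x\Big[-T\langle\mu_T,\hat r-r\rangle+\sum_{(y,z)\in E}TQ_T(y,z)\log\frac{\hat r(y,z)}{r(y,z)}\Big].
\end{equation*}
Now $\frac1T\langle\mu_T,\hat r-r\rangle\to\langle\mu,\hat r-r\rangle=\sum_{(y,z)\in E(Q)}\big(Q(y,z)-\mu(y)r(y,z)\big)$ and $Q_T(y,z)\to Q(y,z)$ under $\hat{\bb P}_x$, so dividing by $T$ and passing to the limit (using finiteness of $E(Q)$ and uniform integrability from the Poisson domination of Remark~\ref{SD} applied to $\hat\xi$, whose rates are bounded) gives
\begin{equation*}
\varlimsup_{T\to\infty}\frac1T\Ent(\tilde P^{(\mu,Q)}_T\mid P_T)\le\sum_{(y,z)\in E(Q)}\Big[Q(y,z)\log\frac{Q(y,z)}{\mu(y)r(y,z)}-\big(Q(y,z)-\mu(y)r(y,z)\big)\Big],
\end{equation*}
and since $\mu(y)r(y,z)=Q^\mu(y,z)>0$ on $E(Q)$ and the remaining edges $(y,z)\in E\setminus E(Q)$ with $y\in\supp(\mu)$ contribute $\Phi(0,Q^\mu(y,z))=Q^\mu(y,z)$ only when $\langle\mu,r\rangle<\infty$ — which holds since $I(\mu,Q)<\infty$ — this upper bound is exactly $\sum_{(y,z)\in E}\Phi(Q(y,z),Q^\mu(y,z))=I(\mu,Q)$, as desired.

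The main obstacle I anticipate is the interchange of limit and expectation in the term $\hat{\bb E}_x[Q_T(y,z)]$ and in $\hat{\bb E}_x[\langle\mu_T,\hat r-r\rangle]$: one needs a genuine $L^1(\hat{\bb P}_x)$-convergence (not just in probability), and for the $\langle\mu_T,r\rangle$ piece the rate $r$ may be unbounded on $\supp(\mu)$ even though $\hat r$ is bounded. This is controlled by the stochastic domination of Remark~\ref{SD} (the number of jumps along each edge is dominated by a Poisson variable of parameter $T\hat r(y,z)$, uniformly in $T$ after dividing by $T$) together with the fact that $\langle\mu,r\rangle<\infty$ provides the needed integrable envelope; making this uniform-integrability argument clean is the one place requiring care, but it is standard. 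Everything else — irreducibility and non-explosion of $\hat\xi$, invariance of $\mu$, the LLN, the RN computation — is immediate from the structure results already established (Lemma~\ref{t:cicli}, Proposition~\ref{gauchito}) and the exponential-martingale machinery of Section~\ref{s:prep}.
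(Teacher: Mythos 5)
Your proof is correct and follows essentially the same route as the paper: tilt to the Markov chain with rates $Q(y,z)/\mu(y)$ on the finite connected graph $(\supp(\mu),E(Q))$, bound the entropy via the data-processing inequality and the Radon--Nikodym derivative \eqref{RN}, and pass to the limit using ergodicity together with the law of large numbers for the flow. On the one step you flag as delicate, the paper avoids the uniform-integrability argument entirely by using the martingale identity coming from \eqref{dacitare}, namely $T\,\tilde{\bb E}_x[Q_T(y,z)]=\tilde{\bb E}_x\left[\int_0^T \delta_y(X_s)\,\tilde r(y,z)\,ds\right]$, which reduces matters to bounded convergence of a bounded integrand over the finitely many edges in $E(Q)$; also note that your worry about $r$ being unbounded on $\supp(\mu)$ is vacuous, since $(\mu,Q)\in\mc S$ forces $\supp(\mu)$ to be a finite set and hence $r$ is automatically bounded there.
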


\begin{proof}
  By definition \eqref{Jrf} of $J$, we can restrict to $(\mu,Q)\in \mc S$.
  First we discuss the case when  $x\in K:=\supp(\mu)$.
  We denote by $\tilde{\bb P}_x^{(\mu,Q)}$ the distribution of the
  Markov chain $\tilde{\xi}^x$ on $V$ starting from $x$ and having jump rates
  \begin{equation}
    \label{tilderat}
    \tilde{r} (y,z) :=
    \begin{cases}
      \frac{Q(y,z)}{\mu(y)} & \textrm{ if } (y,z)\in E(Q)\\
      \; 0 & \textrm{ otherwise.}
    \end{cases}
  \end{equation}
  Observe that this perturbed chain can be thought of as an irreducible  chain on the
  finite state space $K$. Moreover, the
  condition $\div Q =0$ implies that $\mu$ is the invariant probability
  measure.

  Set $\tilde{P}^{(\mu,Q)}_T := \tilde{\bb P}_x^{(\mu,Q)} \circ
  (\mu_T,Q_T)^{-1}$. The ergodic theorem for finite state Markov
  chains and the law of large numbers for the empirical flow discussed
  in Section~\ref{s:emef} imply that $\{\tilde{P}^{(\mu,Q)}_T\}$
  converges weakly to $\delta_{(\mu,Q)}$.
  We observe that
  \begin{equation}
    \begin{split}
    & \frac 1T  \Ent\big( \tilde{P}^{(\mu,Q)}_T \big| P_T \big)
    \le \frac 1T
    \Ent\Big( \tilde{\bb P}^{(\mu,Q)}_x\big|_{[0,T]} \: \Big| \:
    \bb P_x \big|_{[0,T]} \Big)
    \\
    &\quad
    =  \sum_{y\in K\,,\, z:(y,z)\in E}
    \tilde{\bb E}^{(\mu,Q)}_x
    \Big(
     Q_T(y,z) \log \frac{Q(y,z)}{\mu(y) r(y,z)}
    - \mu_T(y) \Big[ \frac{Q(y,z)}{\mu(y)} - r(y,z) \Big] \Big)
    \end{split}
    \label{troppilabel}
  \end{equation}
 where the subscript $[0,T]$ denotes the restriction to the
interval $[0,T]$ (above we used the convention $0\log 0:=0$).
Indeed, the first inequality follows from the variational characterization of
the relative entropy (see \cite[Sec.~2]{DV4}--(IV)) and the second
from a straightforward computation of the Radon-Nikodym density
(recall \eqref{RN}).
Since
$
T \tilde{\bb E}^{(\mu,Q)}_x
    \bigl(
     Q_T(y,z) \bigr)=\tilde{\bb E}^{(\mu,Q)}_x
    \bigl( \langle \mu_T, \tilde{r} \rangle \bigr)
 $
(adapt   \eqref{dacitare} to the present setting) and since $\mu_T (y) \to \mu (y)$  $\tilde{\bb P}^{(\mu,Q)}_x$--a.s. by  ergodicity,
 the
r.h.s. of \eqref{troppilabel} converges in the limit $T\to +\infty$
to
$$
\sum_{y,z\in K: (y,z)\in E }\Big(
     Q(y,z) \log \frac{Q(y,z)}{\mu(y) r(y,z)}
    + \mu(y)r(y,z)-Q(y,z)\Big)+\sum_{y\in K}\mu(y)\sum_{z\not\in
    K}r( y,z)\,,
$$
that is $I(\mu,Q)$.

When $x\not \in K$ then there exists an oriented path on $(V,E)$
from $x$ to $K$ since $(V,E)$ is connected. In this case the
perturbed Markov chain $\tilde{\xi}^x$ is defined with rates
\eqref{tilderat} with exception that $\tilde r(y,z):=r(y,z)$ for any
$(y,z)$ belonging to the  oriented path from $x$ to $K$ (fixed once
for all).
Since after a finite number of jumps that Markov chain reach the
component $K$, it is easy  conclude the proof by  the same
computations as before.
\end{proof}

Recall \eqref{rfq} and \eqref{Jrf}. Since
 $I$ is lower
semicontinuous and convex on $\mc P(V)\times L^1_+(E)$ (see Remark \ref{silente}), the inequality $\sce J \ge I$ holds. The proof of the
equality $I= \sce J$ is therefore completed by Proposition
\ref{t:dt}.

\section{Projection from the empirical process: proof of Theorems \ref{LDP:misura+flusso},  \ref{LDPteo2}}
\label{s:proiezione}

We recall the definition of the \emph{empirical process} referring to
\cite{DV4}--(IV), \cite{Vld} for more details.  We consider the space
$ D(\bb R; V)$ endowed of the Skorohod topology and write $X$ for a
generic element of $D(\bb R;V)$.  Given $X \in D(\bb R_+;V)$ and
$t>0$, $X^t\in D(\bb R;V)$ is the $t$--periodic path which
coincides with $X$ on $[0,t)$, that is
$$
\begin{cases}
X^t_s:=  X_s \text{ for } 0\leq s <t\,,\\
X^t_{s+t}:= X^t_s \text{ for } s \in \bb R\,.
\end{cases}
$$
Writing $\mc M_S $ for the space of stationary probabilities on $D(\bb R
;V)$  endowed of the weak topology, given $X\in D(\bb R_+;V)$ and
$t>0$ we denote by $\mc R_{t,X}$ the element in $\mc M_S $ such that
$$\mc R_{t,X} (A)= \frac{1}{t} \int _0^t \chi _A\bigl( \theta _s
X^t\bigr) ds\,, \qquad \forall A \subset D(\bb R;V) \text{ Borel}\,,
$$
where $ (\theta_s X^t)_u:= X^t _{s+u}$.
  Since $X \to \mc R_{t,X}$ is a Borel map from
$D(\bb R_+;V)$ to $\mc M_S $, for each $x \in V$ it induces a
probability measure $\Gamma_{t,x}$ on $\mc M_S $ defined as
$\Gamma_{t,x} := \bb P_x \circ \mc R_{t,X}^{-1}$. The above
distribution $\Gamma_{t,x}$ corresponds to the $t$--periodized
empirical process.

\smallskip

Let us denote by $\bar R$ the stationary process in $\mc M_S$
associated to the Markov chain $\xi$ and having  $\pi$ as marginal
distribution. By the ergodic theorem \eqref{ergodico},
$\Gamma_{t,x}$ weakly converges to $\delta_{\bar R}$ as $t \to
+\infty$, for each $x \in V$. As proven in  \cite{DV4}--(IV), under
the Donsker--Varadhan  condition,   for each $x \in V$ as $t\to
+\infty $ the family of probability measures $\Gamma_{t,x}$
satisfies a LDP with rate $t$ and rate function given by  the
relative entropy per unit of time $H$ w.r.t. the Markov chain $\xi$.

\smallskip

 We briefly
recall the  definition of $H$ and some of its properties, referring
to \cite{DV4}--(IV) for more details.
Given $-\infty \leq s \leq t \leq \infty$, let $\mc F ^s_t$ be the
$\sigma$--algebra in $D(\bb R;V) $ generated by the functions
$(X_r)_{s\leq r \leq t}$. Let $R\in \mathcal M_S$ and $R_{0,X}$ be the
regular conditional probability distribution 
of $R$ given $\mc F^{-\infty}_0$, evaluated on the path $X$. Then
$H(R) \in [0, \infty]$ is the only constant such that $ H(t, R)= t
H(R)$ for all $t>0$, where
 \begin{equation}\label{treporcellini} H(t, R):= \bb E_{R} \left[ H_{\mc F^0_t}\bigl(
 R_{ 0, X} \,\big| \, \bb P_{X_0}\bigr) \right]\,,
 \end{equation}
 $ H_{\mc F^0_t}\bigl(
 R_{ 0, X} \,\big| \, \bb P_{X_0}\bigr)$
  being  the relative entropy  of  $R_{ 0, X}$
 w.r.t. $\bb P_{X_0}$ thought of as probability measures on the
 measure space $D(\bb R;V)$ with measurable sets varying in the
 $\sigma$--subalgebra $\mc F^0_t$. The entropy $H(R)$ can be also
 characterized as the limit
   $
 H( R)= \lim _{t \to \infty} \bar{H} (t, R) /t$,
where \begin{equation}\label{babelino} \bar{H} (t, R):= \sup
_{\varphi \in \mc B (\mc F^0_t)} \left[ \bb E_R ( \varphi) - \bb
E_R\bigl( \log \bb E _{X_0 } (e^\varphi) \bigr) \right]
\end{equation}
and $\mc B ( \mc F ^0_t)$ denotes the family of bounded $\mc
F^0_t$--measurable  functions on $D( \bb R; V)$. Below we will
frequently use that \begin{equation}\label{vedoluce} t H(R) =
H(t,R)\geq \bar H (t,R) = \sup _{\varphi \in Y_1(t)} \bb E_{R} (
\varphi)\,,
\end{equation}
 where $Y_1(t)$ is the
family of functions $ \varphi \in \mc B( \mc F^0_t)$ such that $\bb
E_x \bigl( e^\varphi\bigr) \leq 1$ for all $x \in V$ (the last
identity is an immediate restatement of \eqref{babelino}).

In the following proposition we investigate some key identities
concerning the map $R \to \bigl(\hat\mu (R),\hat Q (R)\bigr)$. Recall
the definitions of $\hat \mu (R)$ and $\hat Q(R)$ given before Lemma
\ref{pasquetta}.

\begin{proposition}\label{cervo1pezzo}
Assume the Markov chain satisfies (A1)--(A4).
Then $\hat \mu \bigl( \mc R _{T,X}\bigr) = \mu_T (X)$ and
 $\hat Q \bigl( \mc R_{T,X} \bigr)=
 Q_T(X^T)
 \in L^1_+ (E)$ for
$\bb P_x$--a.e.\ $X \in D(\bb R_+; V)$.
\end{proposition}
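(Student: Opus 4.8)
The plan is to verify the two claimed identities by unfolding the definitions of $\hat\mu$, $\hat Q$, and the periodized empirical process $\mc R_{T,X}$, reducing everything to elementary manipulations of time integrals over $[0,T]$ of the periodized path $X^T$. First I would record the basic relation: for a Borel function $g$ on $D(\bb R;V)$,
\begin{equation*}
  \bb E_{\mc R_{T,X}}[g] = \frac 1T \int_0^T g(\theta_s X^T)\,ds\,.
\end{equation*}
Applying this with $g(Y) = \delta_y(Y_0)$ and summing against $y$ gives the marginal at time $0$ of $\mc R_{T,X}$, which by definition is $\hat\mu(\mc R_{T,X})$; the right-hand side becomes $\frac 1T \int_0^T \delta_y(X^T_s)\,ds$. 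Since $X^T_s = X_s$ for $0\le s<T$, this equals $\frac 1T\int_0^T\delta_y(X_s)\,ds = \mu_T(X)(y)$, giving the first identity. (The single point $s=T$ contributes zero to the integral, so the $T$-periodization does not matter here.)

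For the flow identity, I would apply the displayed formula with $g = Q_t(y,z)$, the empirical flow observable over a window of length $t$, for a fixed $t>0$ to be chosen conveniently (e.g.\ $t$ a divisor of $T$, or simply using the additivity of the jump-counting functional as in the proof of Lemma~\ref{pasquetta}). By the definition $\hat Q(R)(y,z) = \bb E_R[Q_t(y,z)]$ — which by Lemma~\ref{pasquetta} is independent of $t$ — we get
\begin{equation*}
  \hat Q(\mc R_{T,X})(y,z) = \frac 1T \int_0^T Q_t(y,z)(\theta_s X^T)\,ds\,.
\end{equation*}
The cleanest route is to choose $t = T$, or rather to use the additivity identity from the proof of Lemma~\ref{pasquetta} (with $X^T$ in place of a generic stationary path) to write $\frac 1T\int_0^T Q_t(y,z)(\theta_s X^T)\,ds = Q_T(y,z)(X^T)$ directly, using that the total number of $(y,z)$-jumps of the $T$-periodic path $X^T$ in any window of length $T$ is the same and equals $T\,Q_T(y,z)(X^T)$. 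I would then note that $Q_T(X^T) \in L^1_+(B)$ $\bb P_x$-a.s.\ because the original path $X\in D(\bb R_+;V)$ has finitely many jumps in $[0,T]$ a.s.\ (no explosion, assumption (A2)), and $X^T$ restricted to any length-$T$ window has exactly the same jumps as $X$ on $[0,T)$ together with possibly one ``wrap-around'' jump $X_{T^-}\to X_0$; in all cases only finitely many, so the flow is summable.

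The one point requiring genuine care — the main obstacle, modest as it is — is the boundary effect of periodization: $Q_T(X)$ counts jumps of $X$ in $[0,T]$ while $Q_T(X^T)$ counts jumps of the periodized path, and these differ by the possible wrap-around jump at the gluing point together with the behaviour at the endpoint $t=T$. This is exactly why the statement is phrased as $\hat Q(\mc R_{T,X}) = Q_T(X^T)$ rather than $Q_T(X)$. I would handle this by being careful to work throughout with $X^T$ (not $X$) once the flow is involved, invoking the additivity-under-shifts argument of Lemma~\ref{pasquetta} which is already set up precisely for $T$-periodic paths and stationary averaging, and observing that for the measure $\mc M_S$-valued object $\mc R_{T,X}$ the gluing jump is genuinely part of the periodized dynamics and contributes to the flow. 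For the measure identity no such subtlety arises since the Lebesgue-null endpoint is invisible to the time integral. The a.s.\ statements follow since all of the above holds for every $X$ with finitely many jumps in $[0,T]$, an event of full $\bb P_x$-measure by (A2).
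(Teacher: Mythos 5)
Your proposal is correct and follows essentially the same route as the paper: both reduce $\hat Q(\mc R_{T,X})(y,z)$ to a time-average of jump counts of the periodized path $X^T$ over shifted windows and exploit $T$-periodicity, and both observe $\hat\mu(\mc R_{T,X})=\mu_T(X)$ trivially from $X^T_s = X_s$ on $[0,T)$. The only tactical difference is that the paper fixes a window length $a\in(0,T)$ and computes the integral of arc-indicators $\frac{1}{aT}\int_0^T|\Theta_T(y,z)(X^T)\cap\pi_T([s,s+a])|\,ds = n/T$ explicitly, whereas you take the full period $t=T$ (justified via the $t$-independence of Lemma~\ref{pasquetta}) and invoke shift-invariance of the total count over a period — a valid shortcut once one discards the measure-zero set of $s$ hitting jump times, which your final paragraph acknowledges.
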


\begin{proof}
 The fact that   $\hat \mu
\bigl( \mc R_{T,X}\bigr) = \mu_T (X)$ $\bb P_x$--a.s. has already
been observed in \cite{DV4}--(IV). Let us prove that $\hat Q \bigl(
\mc R_{T,X} \bigr) = Q_T(X^T)$ $\bb P_x$--a.s. It is convenient to
introduce the following notation: given $(y,z)\in E$, $X\in D(\bb
R_+; V)$ and     $I \subset \bb R_+$ we write $N_{I}(X) (y,z)$ for
the number of jumps along $(y,z)$ performed by the path $X$ at some
time in  $I$. In addition we write $N_T (X) (y,z)$ for
$N_{[0,T]}(X) (y,z)$.
 Equivalently, $N_T(X) (y,z)=T Q_T(X)
(y,z)$. Given $T>0$, fix $a\in (0,T)$. We then have
\begin{equation*}
\begin{split}
\hat Q\bigl( \mc R_{T,X} \bigr)  (y,z)   &  = \frac{1}{a} \bb E
_{\mc R_{T,X} } \left( \, N_a (y,z)\, \right)= \frac{1}{a T}\int_0^T
N_a\bigl( \theta_s X^T\bigr)  (y,z)  ds \\& =
 \frac{1}{a T}\int_0^T N_{[s,s+a]}(X^T) (y,z) ds\,.
\end{split}
\end{equation*}
 Let us write $0\leq t_1< t_2 < \cdots < t_n \leq  T$
for the times in $[0,T]$ at which the path $X^T$ jumps from $y$ to
$z$. Note that $n= N_T(X^T)(y,z)$. We denote by $\pi_T : \bb R \to
\bb R/ T \bb Z$ the canonical projection of $\bb R$ on the circle of
length $T$. It maps bijectively $[0,T)$ on $\bb R/ T \bb Z$.
Moreover, we define the set  $\Theta_T(X^T) (y,z):= \{\pi_T(t_1),
\pi_T(t_2) , \dots , \pi_T(t_n) \}$. Since $T>a$ the number
$N_{[s,s+a]}(X^T)(y,z) $ of jumps from $y$ to $z$ made by $X^T$ in
the time interval $[s,s+a]$ coincides with the cardinality of
$\Theta_T(X^T)(y,z)\cap \pi_T( [s,s+a]) $. Hence
\begin{multline}\label{rio3D}
\hat Q
\bigl( \mc R_{T,X} \bigr) (y,z)  = \frac{1}{a T} \int _0^T
\left| \Theta_T(X^T) (y,z)
\cap \pi_T([s,s+a ]) \right|\, ds =\\
 \sum _{k=1} ^n \frac{1}{a T}
\int _0^T \id \left( \pi_T(y_k) \in \pi_T([s,s+a]) \right) ds= \sum
_{k=1} ^n \frac{1}{T}= Q_T(X^T) (y,z)\,.
\end{multline}
\end{proof}

Note that, since  $\bb P_x$--a.s. time $T$ is not a jump time, it
holds
\begin{equation}\label{spostare}
Q_T(X^T) (y,z)
=
\begin{cases} Q_T(X) (y,z)
+\frac 1T& \text{ if }
(X_{T-},X_0)=(y,z) \in E\,,\\
Q_T(X) (y,z)
& \text{ otherwise}\,,
\end{cases}
\qquad \bb P_x\text{--a.s.}
\end{equation}

\medskip

In what follows, in order to allow a better overview of the proof of
Theorems \ref{LDP:misura+flusso} and \ref{LDPteo2}, we focus on the
main steps, postponing some technical details in subsequent
sections. We start with Theorem \ref{LDPteo2}, since the product
topology on the flow space is simpler.

\subsection{Proof of Theorem \ref{LDPteo2} }\label{cassandro}
The proof is based on  the generalized contraction principle
related to   the concept of exponential approximation discussed in
\cite[Sec.~4.2.2]{DZ}. To this aim,
 given $\epsilon \in (0,1/2)$, we
fix a  continuous function $\varphi_\epsilon: \bb R\to [0,1]$ such
that $\varphi _\epsilon (x)=0$ if $ x \not \in (0,1)$ and $ \varphi
_\epsilon (x)=1$ if $x \in [\epsilon, 1-\epsilon]$.  For each $(y,z)
\in E$   we consider the continuous and bounded function $F^\epsilon
_{y,z}: D(\bb R;V) \to \bb R$ defined as
\begin{equation*}
F_{y,z}^\epsilon (X):= \Big\{ \sum _{s \in [0,1]} \varphi_\epsilon
(s) \id\bigl( X_{s-}=y\,,\; X_s =z \bigr)\Big \} \wedge \epsilon
^{-1}.
\end{equation*}
Then, we define $\hat Q _\epsilon  : \mc M_S \to [0, +\infty]^E$ as
$\hat Q_\epsilon (R) (y,z) := \bb E_R \bigl (F _{y,z}^\epsilon
\bigr)$. Note that $\hat Q _\epsilon$ maps $\mc M_S$ into $[0,
\epsilon ^{-1}]^E$.

\begin{proposition}\label{cervo2} Assume the Markov chain satisfies
(A1)--(A4). Consider the space $[0,+\infty]^E$ endowed of the
product topology and the Borel $\sigma$--algebra. Then the following
holds:

\begin{itemize}
\item[(i)] The map $(\hat\mu, \hat Q):  \mc M_S\to \mc P(V)
\times [0,+\infty] ^E$ is measurable and the map $\hat \mu: \mc M_S
\to \mc P(V)$ is continuous.

\item[(ii)] The maps   $\hat
Q_\epsilon: \mc M_S \to [0,+\infty]^E$, parameterized by $\epsilon\in
(0,1/2)$, are continuous and satisfy
\begin{align}
& \lim _{\epsilon \downarrow 0} \sup _{R \in \mc M_S \,:\, H(R) \leq
\alpha} \bigl| \hat Q (R) \, (y,z) - \hat Q_\epsilon (R)\, (y,z)\bigr|=0
\,, \label{dz1}
\\
& \lim_{\epsilon  \downarrow 0} \varlimsup_{T \uparrow \infty}
\frac{1}{T} \log  \Gamma_{T,x} \left( \bigl| \hat Q  (y,z)  - \hat
Q_\epsilon (y,z) \bigr|
> \delta \right)=-\infty\,,\label{dz2}
\end{align}
for any $x \in V$, $\alpha>0$, $\delta>0$ and any edge $(y,z) \in E$.
\end{itemize}
\end{proposition}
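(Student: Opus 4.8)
The plan is to establish the four assertions separately --- the two measurability/continuity statements, then the approximation bound \eqref{dz1}, then the exponential approximation \eqref{dz2} --- reducing everything to the exponential supermartingale of Lemma~\ref{t:em2}, the entropy inequality contained in \eqref{vedoluce}, and the stochastic domination of Remark~\ref{SD}.

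\emph{Part (i) and the continuity of $\hat Q_\epsilon$.} First I would rewrite the maps as integrals of explicit functionals on $D(\bb R;V)$: for a stationary $R$ and any $a>0$, by stationarity and Lemma~\ref{pasquetta},
\[
\hat\mu(R)(x)=\bb E_R\Big[\tfrac1a\int_0^a\id(X_s=x)\,ds\Big],\qquad \hat Q(R)(y,z)=\bb E_R\big[N_{[0,1]}(y,z)\big],
\]
where $N_{[0,1]}(y,z)(X)$ is the number of $y\to z$ jumps of $X$ in $[0,1]$. The first integrand is a \emph{bounded continuous} functional on $D(\bb R;V)$ (Skorokhod convergence preserves such time integrals), so $R\mapsto\hat\mu(R)(x)$ is continuous for each $x$, hence $\hat\mu$ is continuous into $\mc P(V)$; likewise each $F^\epsilon_{y,z}$ is bounded and continuous --- $\varphi_\epsilon$ vanishes near the endpoints $0,1$, so the jump configuration of $X$ near those times (the only place where Skorokhod continuity is delicate) is irrelevant, and the truncation at $\epsilon^{-1}$ preserves continuity --- whence each $\hat Q_\epsilon$ is continuous. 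For the measurability of $\hat Q$ I would use that $R\mapsto\int f\,dR$ is Borel on $\mc M_S$ for every bounded Borel $f$ (continuous for bounded continuous $f$, and the class is stable under bounded monotone limits), pass to the monotone limit for the nonnegative Borel functional $N_{[0,1]}(y,z)$, and recall that $[0,+\infty]^B$ carries the product $\sigma$-algebra.

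\emph{Proof of \eqref{dz1}.} With $G_\epsilon(X):=\sum_{s\in[0,1]}\varphi_\epsilon(s)\id(X_{s-}=y,X_s=z)\le N_{[0,1]}(y,z)(X)=:N$, I would use the elementary splitting
\[
0\le\hat Q(y,z)(R)-\hat Q_\epsilon(y,z)(R)=\bb E_R[N-G_\epsilon]+\bb E_R\big[(G_\epsilon-\epsilon^{-1})^+\big]\le 2\epsilon\,\hat Q(y,z)(R)+\bb E_R\big[(N-\epsilon^{-1})^+\big],
\]
where $\bb E_R[N-G_\epsilon]\le\bb E_R[N_{(0,\epsilon)\cup(1-\epsilon,1)}(y,z)]=2\epsilon\,\hat Q(y,z)(R)$ by stationarity. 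It then remains to show, uniformly over $\{R:H(R)\le\alpha\}$, that $\hat Q(y,z)(R)$ stays bounded and that $\bb E_R[(N-\epsilon^{-1})^+]\to0$ as $\epsilon\downarrow0$. Both follow from the entropy inequality $\bb E_R[g]\le H(R)+\log\sup_x\bb E_x[e^{g}]$, valid for bounded $\mc F^0_1$-measurable $g$ (a restatement of \eqref{vedoluce} at $t=1$, extended to nonnegative $g$ by truncation), combined with the Poisson domination $\bb E_x[e^{\psi(N_{[0,1]}(y,z))}]\le\bb E[e^{\psi(\mc Z_{y,z})}]$ of Remark~\ref{SD}: taking $\psi(n)=\lambda n$ gives $\hat Q(y,z)(R)\le\lambda^{-1}\big(r(y,z)(e^\lambda-1)+\alpha\big)<+\infty$, and taking $\psi$ superlinear with $\bb E[e^{\psi(\mc Z_{y,z})}]<+\infty$ (possible since the Poisson law has faster-than-geometric tails) gives $\sup_{H(R)\le\alpha}\bb E_R[\psi(N_{[0,1]}(y,z))]<+\infty$, so by the de la Vall\'ee--Poussin criterion the family $\{N_{[0,1]}(y,z)\text{ under }R:H(R)\le\alpha\}$ is uniformly integrable, which yields the required uniform vanishing.

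\emph{Proof of \eqref{dz2}, and the main obstacle.} By Proposition~\ref{cervo1pezzo}, $\hat Q(y,z)(\mc R_{T,X})=Q_T(y,z)(X^T)$, while a direct computation (each $y\to z$ jump of the $T$-periodic path $X^T$ acquires, after integration over the shift $s\in[0,T]$, the weight $\kappa_\epsilon:=\int_0^1\varphi_\epsilon$) gives, for $T>1$, $\hat Q_\epsilon(y,z)(\mc R_{T,X})=\kappa_\epsilon Q_T(y,z)(X^T)-\tfrac1T\int_0^T(G_{s,\epsilon}-\epsilon^{-1})^+ds$, where $G_{s,\epsilon}:=\sum_{u\in[0,1]}\varphi_\epsilon(u)\id(X^T_{s+u-}=y,X^T_{s+u}=z)\le M_s:=N_{[s,s+1]}(y,z)(X^T)$; hence $|\hat Q(y,z)(\mc R_{T,X})-\hat Q_\epsilon(y,z)(\mc R_{T,X})|\le(1-\kappa_\epsilon)Q_T(y,z)(X^T)+B_T^\epsilon$ with $B_T^\epsilon:=\tfrac1T\int_0^T(M_s-\epsilon^{-1})^+ds$, and it suffices that $\varlimsup_T\tfrac1T\log\bb P_x(\cdot)$ of each of $\{(1-\kappa_\epsilon)Q_T(y,z)(X^T)>\delta/2\}$ and $\{B_T^\epsilon>\delta/2\}$ tend to $-\infty$ as $\epsilon\downarrow0$. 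The first, using $Q_T(y,z)(X^T)\le Q_T(y,z)(X)+1/T$ and $\kappa_\epsilon\uparrow1$, follows as in Remark~\ref{willy} from Lemma~\ref{t:em2} with $F=\lambda\delta_{y,z}$: optimizing in $\lambda$ gives $\varlimsup_T\tfrac1T\log\bb P_x(Q_T(y,z)(X)>a)\le-\Phi(a,r(y,z))\to-\infty$. The delicate point is $B_T^\epsilon$: being a \emph{time average} it concentrates, and a naive bound by $\sup_sM_s$ is useless, since a burst of $\epsilon^{-1}$ jumps in some unit window is \emph{typical} over a long horizon $T$. Instead I would decompose $[0,T]$ into unit intervals, use $(n_i+n_{i+1}-\epsilon^{-1})^+\le(n_i-\epsilon^{-1}/2)^++(n_{i+1}-\epsilon^{-1}/2)^+$ to get $\int_0^T(M_s-\epsilon^{-1})^+ds\le 2\sum_i(n_i-\epsilon^{-1}/2)^+$ with $n_i:=N_{[i,i+1]}(y,z)(X^T)$, and then apply an exponential Chebyshev inequality together with the \emph{conditional} stochastic domination of the window counts $n_i$ by i.i.d.\ Poisson variables (Remark~\ref{SD}, conditioning successively on $\mc F^0_i$): this yields $\bb E_x\big[\exp\{\theta\int_0^T(M_s-\epsilon^{-1})^+ds\}\big]\le\rho_{\epsilon,\theta}^{\lceil T\rceil+1}$ with $\rho_{\epsilon,\theta}:=\bb E\big[\exp\{2\theta(\mc Z_{y,z}-\epsilon^{-1}/2)^+\}\big]$, hence $\varlimsup_T\tfrac1T\log\bb P_x(B_T^\epsilon>\delta/2)\le-\theta\delta/2+\log\rho_{\epsilon,\theta}$; letting $\epsilon\downarrow0$ (so $\rho_{\epsilon,\theta}\downarrow1$ by dominated convergence, for fixed $\theta$) and then $\theta\to+\infty$ gives $-\infty$. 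The remaining estimates are of the exponential-tightness type already used in Section~\ref{s:prep}, the only extra care being for the harmless $O(1)$ discrepancy between $X^T$ and $X$ at the gluing point.
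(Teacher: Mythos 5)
Your proof is correct and follows the paper's strategy in all essentials: the same split of $\hat Q-\hat Q_\epsilon$ into an $O(\epsilon)$ boundary term plus a large-burst tail, the same use of the entropy bound \eqref{vedoluce} with Poisson domination (Remark~\ref{SD}) to get the uniform control in \eqref{dz1}, and for \eqref{dz2} the same reduction to a $(1-\kappa_\epsilon)Q_T$ piece handled by the Poisson LDP plus a burst piece chopped into unit windows via an inequality of the type $(a+b-\epsilon^{-1})^+\le(a-\epsilon^{-1}/2)^++(b-\epsilon^{-1}/2)^+$ and controlled by Poisson domination and an exponential Chebyshev/Cram\'er estimate. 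The small differences (de la Vall\'ee--Poussin uniform integrability in place of the paper's explicit $C(\gamma,\epsilon)$; exponential Chebyshev under conditional domination in place of the paper's explicit coupling $\mc P$ and Cram\'er's theorem) are only presentational.
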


As shown below, if $H(R)<+\infty$ then $\hat Q(R) \in \bb R_+^E$.  In
addition $\hat Q_\epsilon$ always assumes finite values. In
particular, the quantities appearing in \eqref{dz1} and \eqref{dz2}
are finite and the subtraction is meaningful.  We postpone the proof
of Proposition \ref{cervo2} to Section \ref{dim_cervo2} and conclude
the proof of Theorem \ref{LDPteo2}.

To prove item (i)  up to \eqref{samarcanda1}  we apply Theorem
4.2.23 in \cite{DZ}. Identity \eqref{dz1} corresponds to formula
(4.2.24) there, while identity \eqref{dz2} states, following the
terminology in \cite{DZ}, that the family of
probability measures $\left\{\Gamma_{T,x}\circ \bigl(\hat\mu,\hat
Q_\epsilon)^{-1}\right\}$ is an exponentially good approximation of
the family   $\left\{\Gamma_{T,x}\circ \bigl(\hat\mu,\hat
Q)^{-1}\right\}$. Combining the last observations with  the LDP of
the empirical process proved in \cite{DV4}--(IV), one gets the
thesis for the family  of probability measures $\{\bb
  P_x\circ (\mu_T,\tilde{Q}_T)^{-1}\}$ on $\mc P(V)\times \bb [0,+\infty]^E$
  where $ \tilde{Q}_T(X):= Q_T(X^T)$  (use Proposition
  \ref{cervo1pezzo}). At this point, due to Theorem 4.2.13 in
  \cite{DZ}, we only need to prove that the families of probability
  measures
$\{\bb
  P_x\circ (\mu_T,Q_T)^{-1}\}$ and $\{\bb
  P_x\circ (\mu_T,\tilde{Q}_T)^{-1}\}$ are exponentially
  equivalent. It is enough  to show that  for each $\delta>0$  it holds
  \begin{equation}\label{rotture}
  \varlimsup _{T \to +\infty} \frac{1}{T} \log \bb P_x \bigl( D(
  \tilde{Q}_T, Q_T) >\delta)=-\infty\,,
  \end{equation}
  where $D(\cdot, \cdot)$ denotes the metric of $[0,+\infty]^E$
  introduced at the beginning of
  Subsection \ref{prodotto}.
 By \eqref{spostare} $\tilde{Q}_T (y,z)= Q_T(y,z)$
with exception of at most one  edge $(y,z)$ where it holds $\tilde{Q}_T (y,z)= Q_T(y,z)+1/T$. Since
$|a/(1+a)-(a+\Delta)/(1+a+\Delta)|\leq \Delta$ for $a, \Delta \geq
0$, we conclude that  $D(
  \tilde{Q}_T, Q_T)\leq 1/T$, thus allowing to end the proof.

\subsection{Proof of \eqref{samarcanda}}

\subsubsection{Proof of \eqref{samarcanda} for $Q \not \in [0,+\infty)^E$}


Let $Q\in [0,+\infty]^E$ be such that $Q(y,z)=+\infty$ for some $(y,z)
\in E$. We need to show $\tilde I (\mu,Q) = +\infty$.
By Remark \ref{SD} (stochastic domination), it holds
$C:=\sup_{x \in V} \bb E _x \left( e^{Q_T (y,z) }\right) < +\infty.$
Hence for $ \lambda>0$ the function
$\varphi(X) := Q_T(X) (y,z) \id ( Q_T(X) (y,z) \leq\lambda)- \log C$
belongs to $Y_1(T)$.
By \eqref{vedoluce} we get
$$
 TH(R) \geq \bar H(T,R)\geq \bb E_R( \varphi)
$$
and we conclude by taking the limit $\lambda\to \infty$.


\subsubsection{Proof that $I(\mu,Q) \leq \tilde I(\mu,Q)$  for
$(\mu,Q)\in \mc P (V) \times L^1_+ (E)$}

$\,$
\smallskip

\noindent
Given $y \not= z$ in $V$  define $Q_T(X) (y,z)$  as the $T$ times the number of jumps up to time $T$  along $(y,z)$ in the trajectory $X$.

\begin{lemma}\label{befana}
If $R \in \mc M_S$ and $H(R)<+\infty$ then $R \bigl( Q_T(y,z) >0\bigr)=0$ for all $T\geq 0$ and $(y,z) \in (V \times V) \setminus E$ with $y \not = z$.
\end{lemma}
\begin{proof}
Take the  function $\varphi(X):= \lambda \id ( Q_T
(y,z)
>0) $ for fixed $\lambda >0$. Note that  $\varphi \in Y_1 (T)$
since  $\varphi\equiv 0$ $\bb P_x $--a.s.
Hence by \eqref{vedoluce} we  get
$$ TH(R) \geq \bar H(T,R)\geq \bb E_R( \varphi)= \lambda R\left(
Q_T(y,z)>0\right) \,.$$ Since $H(R)<+\infty$  the thesis follows by taking $\lambda$ arbitrarily
large.
\end{proof}

\begin{lemma}\label{l1}
Given $R \in \mc M _S$ with   $H(R)<+\infty$, it
holds
$$ \sum _{ z\,:\,(y,z)\in E} \hat Q(y,z)= \sum _{z\,:\,(z,y) \in E}
\hat Q(z,y)\,, \qquad \hat Q= \hat Q(R)\,.$$
\end{lemma}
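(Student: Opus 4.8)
The plan is to integrate the pathwise continuity equation \eqref{pce} against the stationary law $R$; the resulting identity is exactly the assertion that the flow $\hat Q(R)$ associated to a stationary process is divergenceless. First I would fix an arbitrary $T>0$: this is harmless because, by Lemma~\ref{pasquetta}, $\hat Q(R)(y,z)$ does not depend on $T$. For $X$ in the path space I set $J^{\mathrm{out}}_T(y)(X):=T\sum_{z:(y,z)\in E}Q_T(y,z)(X)$ and $J^{\mathrm{in}}_T(y)(X):=T\sum_{z:(z,y)\in E}Q_T(z,y)(X)$, i.e.\ the number of jumps out of, resp.\ into, $y$ in $[0,T]$. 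I would first record two elementary facts: (a) on the discrete space $V$ every càdlàg path has only finitely many jumps in any bounded time interval, so $J^{\mathrm{out}}_T(y)(X)$ and $J^{\mathrm{in}}_T(y)(X)$ are finite nonnegative integers for every $X$; (b) since $\hat Q(R)\in\bb R_+^E$ forces $\bb E_R[Q_T(y,z)]=0$ for $(y,z)\in B\setminus E$, the path $R$--a.s.\ performs no jump outside $E$, which justifies restricting the sums to edges of $E$. Rearranging \eqref{pce} then gives the genuinely pathwise identity $J^{\mathrm{in}}_T(y)(X)=J^{\mathrm{out}}_T(y)(X)+\delta_y(X_T)-\delta_y(X_0)$, valid for every $X$ with no exceptional null set.

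Next I would take expectations with respect to $R$. By Tonelli, $\bb E_R[J^{\mathrm{out}}_T(y)]=T\sum_{z:(y,z)\in E}\hat Q(R)(y,z)$ and $\bb E_R[J^{\mathrm{in}}_T(y)]=T\sum_{z:(z,y)\in E}\hat Q(R)(z,y)$, both a priori valued in $[0,+\infty]$, and, by stationarity of $R$, $\bb E_R[\delta_y(X_T)]=\bb E_R[\delta_y(X_0)]=\hat\mu(R)(y)$, so the term $\delta_y(X_T)-\delta_y(X_0)$ contributes zero. To handle the potential indeterminate form $\infty-\infty$ I would split into cases: if $\bb E_R[J^{\mathrm{out}}_T(y)]<+\infty$, then $J^{\mathrm{out}}_T(y)$ is $R$--integrable, hence so is $J^{\mathrm{in}}_T(y)$ (it differs from the former by a quantity bounded by $1$), and integrating the pathwise identity yields $\bb E_R[J^{\mathrm{in}}_T(y)]=\bb E_R[J^{\mathrm{out}}_T(y)]$; if instead $\bb E_R[J^{\mathrm{out}}_T(y)]=+\infty$, then from $J^{\mathrm{in}}_T(y)\ge J^{\mathrm{out}}_T(y)-1$ one gets $\bb E_R[J^{\mathrm{in}}_T(y)]=+\infty$ as well. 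In either case $\bb E_R[J^{\mathrm{in}}_T(y)]=\bb E_R[J^{\mathrm{out}}_T(y)]$, and dividing by $T$ gives the claimed equality in $[0,+\infty]$.

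I do not expect a real obstacle here: the whole argument is simply ``integrate the continuity equation against a stationary measure''. The only two points that require a moment of care are the bookkeeping for possibly infinite incoming/outgoing sums, which is what the case split above is for (so that the statement remains correct as an identity in $[0,+\infty]$, not just for $L^1$ flows), and the observation that \eqref{pce} holds identically, with no $R$--null exceptional set, which in turn rests only on the finiteness of the jump counts guaranteed by the discreteness of $V$ and the non-explosion assumptions.
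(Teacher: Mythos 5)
Your proof is correct and follows essentially the same route as the paper: the paper's proof consists precisely of taking the $R$--expectation of the pathwise identity $\id(X_T=y)+\sum_{z:(y,z)\in E}TQ_T(y,z)=\id(X_0=y)+\sum_{z:(z,y)\in E}TQ_T(z,y)$, which is your rearranged version of \eqref{pce}. Your added case split to rule out $\infty-\infty$ and the observation that $\hat Q(R)\in\bb R_+^E$ excludes jumps outside $E$ are legitimate bits of extra bookkeeping that the paper leaves implicit, but they do not change the method.
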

\begin{proof}
The thesis follows by using Lemma \ref{befana} and considering  the $R$--expectation of the following
identity on $D([0,T];V)$:
$$ \id (X_T=y) + \sum_{z\,:\, z \not =y}T Q_T(X) (y,z)=
\id (X_0=y)+\sum_{z\,:\, z \not =y} T Q_T(X)  (z,y)\,.\qedhere
$$
\end{proof}

  Fix $(\mu,Q) \in \mc P (V) \times  L^1 _+(E)$. By Lemma \ref{l1}, if
$\div Q \not =0$  then  $\tilde I(\mu,Q)= +\infty=I(\mu,Q)$.
Hence, from now on we can restrict to $\div Q=0$. Fix $R\in \mc M_S$
such that $Q= \hat Q(R)$ and
 $\mu = \hat \mu (R)$ (the absence of such an $R$ would imply $\tilde I(\mu,Q)=+\infty$ and there would be nothing to prove).

 \medskip

 We first consider the case that there is some edge $(y,z) \in E$ with $Q(y,z)>0$ and $\mu(y)=0$. Trivially  in this case  $I(\mu,Q)=+\infty$. Let us prove that $\tilde I(\mu,Q)=+\infty$.
 To this aim, given $\epsilon>0$,  we define the function $F_\epsilon:E \to \bb R$ as $F_\epsilon (u,v)=\log\frac{  Q(y,z)}{ \epsilon r(y,z)} \mathds{1} \bigl(\, (u,v)= (y,z) \,\bigr)$.   Let
$e^{\varphi_{\epsilon}}:=\bb M_T^{F_{\epsilon}}$ be the
supermartingale introduced in Lemma \ref{t:em2}:
\begin{equation}\label{crema} \varphi_{ \epsilon} =T  Q_T(y,z)\log\frac{  Q(y,z)}{ \epsilon r(y,z)}-T \mu_T(y)  \left[ \frac{ Q(y,z)}{\epsilon} -r(y,z) \right]\,.
\end{equation}
We take $\epsilon $ small enough so that $ \log\frac{  Q(y,z)}{ \epsilon r(y,z)}>0$ and define  for $\ell>0$ the new function $\varphi_{\epsilon , \ell}$ as in the r.h.s. of \eqref{crema}  with $Q_T(y,z)$ replaced by
  $Q_T(y,z)\wedge \ell$.
  Then $\varphi_{\epsilon , \ell} \leq \varphi_{\epsilon}
$ and by Lemma \ref{t:em2} we conclude that $\varphi_{\epsilon , \ell}\in Y_1(T)$. Applying
\eqref{vedoluce} we conclude that
$$  H(R) \geq \bb E_R \bigl ( \varphi_{\epsilon , \ell} \bigr) /T = \bb E_R( Q_T(y,z) \wedge \ell)
\log\frac{  Q(y,z)}{ \epsilon r(y,z)}\,.$$
Taking first the limit $\ell \to +\infty$ and afterwards $\epsilon \to 0$, we get that  $H(R)=+\infty$, thus implying $\tilde I (\mu,Q) =+\infty$.
  \medskip

Due to the previous result, we restrict to the case that $\mu(y)>0$ if $Q(y,z)>0$, with $(y,z)\in E$.
Then we fix an invading sequence $E_n\nearrow E$ of finite subsets of $E$  and consider  the function
$F_{n }: E \to\bb R$ defined as
$$
r^{F_n}(y,z) = r(y,z)e^{F_{n}(y,z)} :=
\begin{cases}
\frac{Q(y,z)}{\mu  (y)}\,, & \text{ if } (y,z)\in E_n \,,
\\
r(y,z) & \text{ otherwise}\,.  \end{cases}
$$
with the convention that $0/0=0$.   Note that the above ratio is well defined since $\mu
(y)  >0$ if  $Q(y,z)>0$. Let
$e^{\varphi_{n}}:=\bb M_T^{F_{n}}$ be the
supermartingale introduced in Lemma \ref{t:em2}:
$$ \varphi_{n} =T \sum _{(y,z) \in E_n} \Big \{ Q_T(y,z)
\log \frac{ Q(y,z) }{\mu (y) r(y,z)}  -\mu_T(y) r(y,z) \bigl[\frac{ Q(y,z) }{\mu (y) r(y,z)}
-1 \bigr]\Big\}\,.
$$
Since $\varphi_{n } $ is unbounded, for $\ell>0$ we
consider the cut--off
$$ \varphi_{n, \ell} := \begin{cases}
\varphi_n & \text{  if  } |\varphi_n | \leq \ell \,, \\
\frac{\varphi_n}{|\varphi_n|}  \ell & \text{  if  } |\varphi_n | > \ell \,.
\end{cases}
$$
We stress that the sum in the definition of $\varphi_n$ is
finite. Since $|\varphi_{n,\ell}| \leq |\varphi_n| \in L^1(R) $
(recall that $Q= \hat Q(R) \in L^1_+(E)$), by the Dominated
Convergence Theorem it holds $ \lim_{\ell \to +\infty} \bb E_R\left(
  \varphi_{n,\ell}\right)= \bb E_R \left(\varphi_n\right)$.
Moreover, there exist positive
constants $A_n,B_n$ depending only on $n$ such that
$$
|\varphi_{n,\ell} | \leq | \varphi_n |
\leq A_n \sum _{(y,z) \in E_n} TQ_T(y,z) +B_n.
$$
By Remark \ref{SD},
this implies that $\log \bb E_{x} \left( e ^{\varphi_{n,\ell}} \right
)$ is bounded uniformly in $x\in V$. Therefore, by
dominated convergence and Lemma \ref{t:em2}, we conclude that
 \begin{equation*}
  \begin{split}
   \lim_{\ell \to +\infty} \bb E_R \log \bb E_{X_0} \left( e
  ^{\varphi_{n,\ell}} \right ) & = \lim_{\ell \to +\infty} \sum_{x \in
  V} \mu(x) \log \bb E_{x} \left( e ^{\varphi_{n,\ell}} \right )
\\
&=\sum
_{x\in V} \mu(x) \log \bb E_x \left( e ^{\varphi_{n}} \right ) \leq
0\,.
  \end{split}
\end{equation*}
As a consequence
$$ \lim _{\ell \to \infty} \left\{ \bb E_R\left(
    \varphi_{n,\ell}\right)- \bb E_R  \log \bb E_{X_0} \left( e
    ^{\varphi_{n,\ell}} \right ) \right\} \geq  E_R\left(
  \varphi_{n}\right)\,.
$$
Combining the above estimate,  \eqref{babelino} and \eqref{vedoluce},  we conclude that
\begin{equation}\label{fabio}
  H(R)  \geq  \bar H(T,R)/T \geq  E_R\left( \varphi_{n}\right) /T =\sum
_{(y,z) \in E_n}\Phi( Q(y,z), Q^\mu(y,z) ) \,.
\end{equation}
To conclude we take the  limit $n \to +\infty$, obtaining $H(R) \geq I(\mu,Q)$ for each $R\in \mc M_S$ such that $\hat \mu(R)=\mu$, $\hat Q(R)= Q$. This implies that $\tilde I(\mu,Q) \geq I(\mu,Q)$.

\subsubsection{Proof that $I(\mu,Q) \geq \tilde I(\mu,Q)$  for
$(\mu,Q)\in \mc P (V) \times L^1_+ (E)$}\label{s:bivio}  As a
consequence of the first part of Theorem \ref{LDPteo2} (already
proved), the function $\tilde I$ is lower semicontinuous. Consider
the sequence  $\{(\mu_n, Q_n)\}_{n \geq 0} $ in $\mc S$ converging
to $(\mu,Q)$ as stated in Proposition \ref{t:dt}. The set $\mc S$
has been defined in Section \ref{lattino} as   the subset of $ \mc
P(V)\times L^1_+(E)$ given by the elements $(\mu,Q)$ with
$I(\mu,Q)<+\infty$ and  such that the graph $(\supp(\mu), E(Q) )$ is
finite   and connected. For each $n$ we consider the continuous time
Markov chain $\xi^{(n)}$ on $V$ with jump rates $r_n(y,z)=
Q_n(y,z)/\mu_n (y)$ with the convention $0/0=0$. Since
$I(\mu_n,Q_n)<+\infty$ it cannot be $Q_n(y,z)>0$ and $\mu_n(y)=0$,
hence the above ratio is well defined.
 Since $\mu_n$ and $Q_n$ have finite support, the Markov chain
$\xi^{(n)}$ has finite effective state space.
In particular, explosion does not take place. The bound
$I(\mu_n,Q_n)<+\infty$ implies also that $\div Q_n=0$, hence we get
that $\mu_n$ is an invariant measure for $\xi^{(n)}$. We define $R_n$
as the stationary Markov chain $\xi^{(n)}$ with marginal $\mu_n$, then
$\hat Q(R_n)=Q_n$. By the Radon--Nykodim derivative \eqref{RN} and the
definition of the entropy $H(\cdot)$, we get that $\tilde I
(\mu_n,Q_n)\leq H(R_n )= I(\mu_n,Q_n)$. 
 Invoking the lower
semicontinuity of $\tilde I$ and Proposition \ref{t:dt}, we get the
thesis.

\subsection{Proof of \eqref{fatto!}}

 Let us take $(\mu,Q) $ with $\mu \in\mc P(V)$
and $Q\in \bb R_+^E \setminus L^1_+(E)$. We need to prove that
$\tilde I(\mu,Q)=+\infty$.  Let $R \in \mc M_S $ be such that $\hat
\mu (R)=\mu$ and $\hat Q(R)=Q$ (we assume $R$ exists, otherwise the
thesis is trivially true). We fix an invading sequence $V_n \nearrow
V$ of finite sets, define  $E_n:= \{ (y,z) \in E\,:\, y,z \in V_n\}$
and  $F_n(y,z):= \id( (y,z) \in E_n )$ for $(y,z)\in E$. Then we
know that $\bb E_x \Big( \exp\{\bb M_T^{F_n} \}\Big)\leq 1$ for all
$x \in V$, using the same notation of Lemma \ref{t:em2}. Again we
need to work with functions in $\mc B (\mc F ^0_T)$. To this aim,
given $\ell >0$ we define $\bb M_{T,\ell}^{F_n}$ as the
supermartingale $\bb M_T ^{F_n}$ except that the empirical flow
$Q_T(y,z) $ is replaced by $Q_T(y,z) \wedge \ell $ for all edges
$(y,z)$.
Then (note that $r^{F_n} \geq r$)  $ \bb M_{T,\ell}^{F_n}\in \mc B (\mc F ^0_T)$
and $\bb M_{T,\ell}^{F_n} \leq \bb M_T^{F_n}$, thus implying that
$\bb M_{T, \ell}^{F_n} \in Y_1(T)$. By \eqref{vedoluce} this implies
that
\begin{equation}\label{amicoluca}
  H(R)\geq \bar H(T,R)/T
\geq \varlimsup_{\ell \to \infty} \bb E_{R}\Big( \bb M_{T,
\ell}^{F_n}\Big)/T= \sum _{(y,z)\in E_n} Q(y,z) - \bb E_R( \mu_T(
r^{F_n}-r) ) \,.
\end{equation}
The conclusion then follows from the next result:
\begin{lemma}\label{schnell}
Assume Condition \ref{t:ccomp} 
 (where the
constants $\sigma,C$ are defined). Then for each $R \in \mc M_S$ it
holds
\begin{equation}\label{armadio}
 \|\hat Q (R) \| \leq
H(R)(1+e/\sigma)+ C \,e/\sigma \,.
\end{equation}
\end{lemma}
\begin{proof}
Let us first prove \eqref{armadio} knowing that  $H(R) \geq \bb
E_R\bigl(v(X_0)\bigr) $ (this will be proved later). We come back to \eqref{amicoluca} and take
first the limit $T \to +\infty$ and afterwards the limit $n \to
+\infty$. Since $F_n(y,z)= \id( (y,z) \in E_n )$, then
$0\leq r^{F_n}-r\leq e r$. By Fubini--Tonelli
and stationarity, $ \bb E_R( \mu_T(r)) = \bb E_R( r (X_0) )$.
We then conclude that
$$ \|\hat Q\|=\|Q\| = \lim _{n\to +\infty}\sum _{(y,z)\in E_n} Q(y,z)\leq
H(R)+e \bb E_R( r (X_0) )\,.
$$
By Condition \ref{t:ccomp}, $ \bb E_R\bigl(r (X_0) \bigr)\leq \bb
E_R\bigl(v(X_0) \bigr) /\sigma+C/\sigma$. Combining with $H(R) \geq \bb
E_R\bigl(v(X_0)\bigr) $ we get the thesis.

\smallskip

Let us now prove that $H(R) \geq \bb E_R\bigl(v(X_0)\bigr) $.  Since
both $H(R)$ and $ E_R\bigl(v(X_0)\bigr) $ are affine in $R$ (see
\cite{DV4}--(IV)) and since all stationary processes are convex
combinations of ergodic stationary processes, it is enough to prove
the claim for an ergodic $R \in \mc M _S$. Given $k, T >0$ and $W
\subset V$ we define $v^{(k)}:= v\wedge k$ and $\varphi(X):=\id
(X_0\in W)   \int _0^T v^{(k)}(X_s) ds$. Trivially, $\varphi \in \mc
B ( \mc F^0_T)$. Then, by the definition of $\bar H(T,R)$,  it holds
\begin{equation}\label{iris}
\begin{split}
T H(R) & \geq \bar H(T,R) \geq \bb E_R(\varphi)- \bb E_R\bigl( \log
\bb E_{X_0} (e^\varphi)\bigr) \\& \geq \bb E_R\Big ( \int _0^T
v^{(k)}(X_s) ds; X_0 \in W \Big) - \max _{x \in W} \log (C_x/c) \,.
\end{split}
\end{equation}
In the last inequality we have used Lemma \ref{t:letem} and the
inequality $v^{(k)} \leq v$. At this point, we   divide \eqref{iris}
by $T$. Since $R$ is ergodic, by Birkhoff ergodic theorem (note that
$v^{(k)} (X_0) \in L^1 (R)$ since $v^{(k)}$ is bounded) we know that
$$ \lim _{T \to \infty}  \frac{1}{T}\int _0^T v^{(k)}(X_s) ds =\bb
E_R\bigl( v^{(k)}(X_0)\bigr)\,, \qquad \text{$R$--a.s.}$$  Taking
the limit $T \to \infty$ and applying the Dominated Convergence
Theorem we conclude that
$$ H(R)\geq \bb E_R\bigl(
v^{(k)}(X_0)\bigr) \bb  R(X_0\in W)\,.
$$
At this point it is enough to take the limit $k \to \infty$ and
afterwards to take $W$ arbitrarily large and invading all $V$.
\end{proof}

\subsection{Proof of Theorem \ref{LDP:misura+flusso}}
The proof uses the results of \cite{ES}, where the notion of
exponentially good approximation and the contraction principle are
extended to the case of completely regular space as image space of
the projection. To this aim we recall some further properties of the
bounded weak* topology on $L^1_+(E)$.

We define $\mc A$ as the set of  sequences
$\mathfrak{a}=(a_n)_{n\geq 1} $ of functions in $ C_0(E)$ such that
$\|a_n \|_\infty\to 0$. Given $\mathfrak{a} \in \mc A$ we introduce
the pseudometric $d_\mathfrak{a}$ on $L^1_+(E)$ as
$$d_\mathfrak{a} (Q,Q'):= \sup _{n \geq 1 } \langle Q - Q'  ,a_n \rangle \,.
$$
Writing $B_\mathfrak{a}(Q,r):= \{ Q'\in L^1_+ (E) \,:\,
d_\mathfrak{a}(Q,Q') <r\}$, the family of sets $\{ B_\mathfrak{a}
(Q,r)\}$, with $\mathfrak{a} \in \mc A$, $Q\in L^1_+(E)$ and $r>0$,
forms a basis for $L^1_+(E)$. This follows from Def. 2.7.1 and Cor.
2.7.4 in \cite{Me}. In addition, the family $\mc D$ of pseudometrics
$\{d_\mathfrak{a} \,:\, \mathfrak{a}\in C_0(E)\}$ is separating,
i.e. given $Q\not =Q'$ in $L^1_+(E)$ there exists $\mathfrak{a} \in
\mc A$ such that $d_\mathfrak{a}(Q,Q')>0$. The above two properties
(basis and separating family of pseudometrics) make $L^1_+(E)$ a so
called \emph{gauge space}. Indeed, one can prove that the concepts
of completely regular space and gauge space are equivalent
\cite[Ch. IX]{Du}.

\smallskip
Due to the above observations on the gauge structure of $L^1_+(E)$
we are in the same settings of \cite{ES}.  In what follows we
restrict to the case $|V|=+\infty$, thus implying $|E|=+\infty$ due
to the irreducibility of the Markov chain $\xi$ (the finite case is
much simpler).
Fix an enumeration
$(e_n)_{n\geq 1}$ of $E$. Consider the maps $\hat Q, \hat Q_\epsilon
$ entering in Proposition \ref{cervo2}
 and define the
maps $\bar Q, \bar Q_\epsilon: \mc M_S \to L^1_+(E)$ by %
\begin{align*}
  \bar Q(R)&=\begin{cases} \hat Q(R) &\; \;\;\text{ if }\hat Q(R)\in
L^1_+(E)\,,\\
0 &\;\;\; \text{ otherwise}\,, \end{cases} \\
 \bar Q_\epsilon (R)(e_n)&=\begin{cases} \hat Q_\epsilon (R)(e_n)  &
\text{ if } n\leq \epsilon^{-1}\,,\\
0 & \text{ otherwise}\,. \end{cases}
\end{align*}

\begin{proposition}\label{cervo3}
Assume the Markov chain satisfies (A1)--(A4)  and Condition
\ref{t:ccomp}.
 Consider the space $L^1_+(E) $ endowed of the  bounded weak*
topology  and the Borel $\sigma$--algebra. Then the following holds:
\begin{itemize}
\item[(i)] The map $\bar  Q:  \mc M_S\to L^1_+(E)$  is measurable while the maps
 $\bar  Q_\epsilon:  \mc M_S\to L^1_+(E)$
 are continuous.
\item[(ii)] For each $\mathfrak{a} \in \mc A$
\begin{align}
& \lim _{\epsilon \downarrow 0} \sup _{R \in \mc M_S \,:\, H(R) \leq
\alpha} d_\mathfrak{a} \bigl( \bar Q (R), \bar Q_\epsilon  (R)
\bigr)=0 \,, \label{dz1bis}
\\
& \lim_{\epsilon  \downarrow 0} \varlimsup_{T \uparrow \infty}
\frac{1}{T} \log  \Gamma_{T,x} \left(  d_\mathfrak{a} ( \bar  Q  ,
\bar Q_\epsilon) > \delta \right) =-\infty\,,\label{dz2bis}
\end{align}
for any $x \in V$, $\alpha>0$, $\delta>0$.
\end{itemize}

\end{proposition}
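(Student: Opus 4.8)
The plan is to deduce Proposition~\ref{cervo3} from its product--topology counterpart, Proposition~\ref{cervo2}, the two extra ingredients being consequences of Condition~\ref{t:ccomp}: the a priori $L^1(E)$--bound $\|\hat Q(R)\|\le M(\alpha)$ on $\{R\in\mc M_S:\,H(R)\le\alpha\}$, with $M(\alpha)$ the constant of Claim~\ref{schnell}, and the exponential tightness of $Q_T$ in $L^1_+(E)$ furnished by \eqref{arcobaleno2}. Throughout one takes $|V|=|E|=+\infty$, the finite case being trivial. I would use three elementary facts: (a) $\hat Q_\epsilon(R)(y,z)=\bb E_R(F^\epsilon_{y,z})\le \bb E_R(Q_1(y,z))=\hat Q(R)(y,z)$ for every $R\in\mc M_S$, since $0\le F^\epsilon_{y,z}\le Q_1(y,z)$; (b) $\bar Q_\epsilon(R)$ is supported on $\{e_1,\dots,e_N\}$ with $N:=\lfloor\epsilon^{-1}\rfloor$ and agrees there with $\hat Q_\epsilon(R)$; (c) for $\mathfrak a=(a_n)\in\mc A$ one has $A_{\mathfrak a}:=\sup_n\|a_n\|_\infty<+\infty$, and since $\|a_n\|_\infty\to0$ with each $a_n\in C_0(E)$ there is a uniform--in--$n$ decay $\lim_{m\to\infty}\sup_n|a_n(e_m)|=0$ along the fixed enumeration of $E$.

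For part (i), $\bar Q_\epsilon$ takes values in the finite--dimensional compact set $\{Q\in L^1_+(E):\, Q(e_n)\le\epsilon^{-1}\text{ for }n\le N,\ Q(e_n)=0\text{ for }n>N\}$; on each ball $B_\ell$ the bounded weak$^*$ and weak$^*$ topologies coincide (a continuous bijection of a compact space onto a Hausdorff space is a homeomorphism; see \cite{Me}), so on this finite--dimensional set the bounded weak$^*$ topology is the product topology and the continuity of $\bar Q_\epsilon$ reduces to that of each coordinate $R\mapsto\hat Q_\epsilon(R)(e_n)$, granted by Proposition~\ref{cervo2}(ii). For $\bar Q$: since $L^1_+(E)=\bigcup_\ell B_\ell$ and the Borel $\sigma$--algebra of a compact metrizable ball is generated by the coordinate evaluations, the bounded weak$^*$ Borel $\sigma$--algebra on $L^1_+(E)$ is generated by the maps $Q\mapsto Q(e)$; as $\bar Q=\hat Q$ on the measurable set $\{\|\hat Q(R)\|<+\infty\}$ and $\bar Q=0$ elsewhere, with each $R\mapsto\hat Q(R)(e)$ measurable by Proposition~\ref{cervo2}(i), $\bar Q$ is Borel.

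For \eqref{dz1bis} I would fix $\mathfrak a\in\mc A$, $\alpha>0$, $\eta>0$, use (c) to pick $m_0$ with $\sup_n\sup_{m\ge m_0}|a_n(e_m)|<\eta$, and consider $R$ with $H(R)\le\alpha$, so that $\bar Q(R)=\hat Q(R)$ and $\|\bar Q(R)\|\le M(\alpha)$ by Claim~\ref{schnell}. Taking $\epsilon$ small enough that $N\ge m_0$ and splitting, for each $n$,
\[
\langle\bar Q(R)-\bar Q_\epsilon(R),a_n\rangle=\sum_{m\ge1}\big(\bar Q(R)(e_m)-\bar Q_\epsilon(R)(e_m)\big)a_n(e_m)
\]
into the ranges $m<m_0$, $m_0\le m\le N$, $m>N$: by (a)--(b) every summand $\bar Q(R)(e_m)-\bar Q_\epsilon(R)(e_m)$ is $\ge0$, so (using $|a_n(e_m)|<\eta$ in the latter ranges) the last two blocks are each bounded by $\eta\|\bar Q(R)\|\le\eta M(\alpha)$, while the first block is at most $A_{\mathfrak a}\sum_{m<m_0}\sup_{H(R')\le\alpha}|\hat Q(R')(e_m)-\hat Q_\epsilon(R')(e_m)|$, which tends to $0$ as $\epsilon\downarrow0$ by \eqref{dz1}. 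Taking the supremum over $n$, then over $\{H\le\alpha\}$, then $\varlimsup_{\epsilon\downarrow0}$, one is left with $2\eta M(\alpha)$; letting $\eta\downarrow0$ gives \eqref{dz1bis}.

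For \eqref{dz2bis} I would first recall that, $\bb P_x$--a.s., $\bar Q(\mc R_{T,X})$ has the same $E$--coordinates as $Q_T(X^T)=\hat Q(\mc R_{T,X})$ and lies within $1/T$ of $Q_T(X)$ in $L^1(E)$--norm (Proposition~\ref{cervo1pezzo}, \eqref{spostare}). Fixing $\mathfrak a\in\mc A$, $\delta>0$, $x\in V$, $K>0$, I would use \eqref{arcobaleno2} to pick $L$ with $\varlimsup_T\frac1T\log\bb P_x(\|Q_T(X^T)\|>L)\le-K$, then $\eta$ with $2\eta L<\delta/2$ and $m_0$ as in (c). The same three--block splitting as above (with $M(\alpha)$ replaced by $L$ on the event $\{\|Q_T(X^T)\|\le L\}$, and $N\ge m_0$) gives, on that event,
\[
d_{\mathfrak a}\big(\bar Q(\mc R_{T,X}),\bar Q_\epsilon(\mc R_{T,X})\big)\le 2\eta L+A_{\mathfrak a}\sum_{m<m_0}\big|\hat Q(\mc R_{T,X})(e_m)-\hat Q_\epsilon(\mc R_{T,X})(e_m)\big|,
\]
so the left side exceeds $\delta$ only if $|\hat Q(\mc R_{T,X})(e_m)-\hat Q_\epsilon(\mc R_{T,X})(e_m)|>\delta/(2A_{\mathfrak a}m_0)$ for some $m<m_0$; hence
\[
\Gamma_{T,x}\big(d_{\mathfrak a}(\bar Q,\bar Q_\epsilon)>\delta\big)\le\bb P_x\big(\|Q_T(X^T)\|>L\big)+\sum_{m=1}^{m_0-1}\Gamma_{T,x}\Big(\big|\hat Q(e_m)-\hat Q_\epsilon(e_m)\big|>\tfrac{\delta}{2A_{\mathfrak a}m_0}\Big).
\]
Taking $\varlimsup_T\frac1T\log$ of this finite sum, then $\lim_{\epsilon\downarrow0}$ and using \eqref{dz2}, bounds the result by $-K$; since $K$ is arbitrary, \eqref{dz2bis} follows. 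The one genuinely delicate point is the uniform control of the tail $\sum_{m>N}\bar Q(\cdot)(e_m)$: this is precisely where Condition~\ref{t:ccomp} with $\sigma>0$ enters, via Claim~\ref{schnell} and \eqref{arcobaleno2}, and where the weaker Donsker--Varadhan condition would not suffice; once the tail is controlled, the gauge structure of the bounded weak$^*$ topology reduces everything to the finitely many leading coordinates $e_1,\dots,e_{m_0}$, to which Proposition~\ref{cervo2} applies directly.
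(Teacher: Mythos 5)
Your proof is correct and follows essentially the same route as the paper: part (i) is reduced to coordinate-wise measurability/continuity via the finite-dimensional support of $\bar Q_\epsilon$ (the paper invokes Corollary 2.7.3 of \cite{Me} for this), and both \eqref{dz1bis} and \eqref{dz2bis} are obtained by the same truncation strategy — use Claim~\ref{schnell} (resp.\ the exponential tightness \eqref{arcobaleno2} via Proposition~\ref{t:etem}) to confine $\|\hat Q\|$ to a ball, kill the tail of the pairing $\langle\cdot,a_n\rangle$ using $\mathfrak a\in\mc A$ and $a_n\in C_0(E)$, and reduce to the finitely many coordinate estimates supplied by Proposition~\ref{cervo2}. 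Your single uniform-decay observation (c) merges the paper's two-step reduction (large $n$ first, then $e\notin E'$ for the remaining finitely many $n$), but this is a cosmetic reorganization of the same argument.
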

The proof is given in Section \ref{dim_cervo3}.

\smallskip

As byproduct  of Proposition \ref{cervo3}, the extended contraction
principle in \cite{ES}, the LDP of the empirical process and Theorem
\ref{LDPteo2}--(ii) we can conclude the proof of Theorem
\ref{LDP:misura+flusso}. Let us be more precise. We apply Theorem
1.13 in \cite{ES}. Formula \eqref{dz1bis} corresponds to formula
(1.14) in \cite{ES}, while formula \eqref{dz2bis} means that the
family of probability measures $\left\{\Gamma_{T,x}\circ
\bigl(\hat\mu,\bar Q_\epsilon)^{-1}\right\}$ is a
$(d_\mathfrak{a})_{\mathfrak{a} \in \mc A}$--exponentially good
approximation of the family $\left\{\Gamma_{T,x}\circ
\bigl(\hat\mu,\bar Q)^{-1}\right\}$. On the other hand, we have that
$\bar Q= \hat Q \in L^1_+(E)$ $\Gamma_{T,x}$--a.s., while by
Proposition \ref{cervo1pezzo} the random variable $\hat Q$ sampled
according to $\Gamma_{T,x}$ has the same law of $\tilde
Q_T(X):=Q_T(X^T)$ with $X \in D(\bb R_+; V)$ sampled according to
$\bb P_x$. Hence, by Corollary 1.10 in \cite{ES}
 we only need to prove that the families of probability
  measures
$\{\bb
  P_x\circ (\mu_T,Q_T)^{-1}\}$ and $\{\bb
  P_x\circ (\mu_T,\tilde{Q}_T)^{-1}\}$ are $(d_\mathfrak{a})_{\mathfrak{a} \in
\mc A}$--exponentially
  equivalent on $\mc P(V) \times L^1_+(E)$. It is enough  to show   for each $\delta>0$ and $\mathfrak{a} \in \mc A$ that
  \begin{equation}\label{rotturebis}
  \varlimsup _{T \to +\infty} \frac{1}{T} \log \bb P_x \bigl( d_\mathfrak{a}(
  \tilde{Q}_T, Q_T) >\delta)=-\infty\,,
  \end{equation}
Since by \eqref{spostare} $d_\mathfrak{a}(
  \tilde{Q}_T, Q_T)\leq \|\mathfrak{a}\|_\infty/T$, we get the
  thesis.

\section{Exponential approximations: Proof of Proposition
\ref{cervo2}}\label{dim_cervo2} Item (i) is straightforward. We
concentrate on item (ii). Since $\mc M_S$ is endowed of the weak topology and since $F
_{y,z}^\epsilon$  is a continuous bounded function on $D(\bb R;V)$
we conclude that $\hat Q_\epsilon $ is continuous.


\subsection{Proof of \eqref{dz1} }

As already proved in the previous section (independently from the
content of Proposition \ref{cervo2}), $\tilde I(\mu,Q) =+\infty$ if
$Q \not \in [0,+\infty)^E$. Hence, given $R \in \mc M_S$ with $H(R)<
+\infty$, it must be $\hat Q(R) (y,z)  < \infty$ for all $(y,z) \in
E$.
Below $R\in  \mc M_S$ is such that  $H(R)\leq \alpha$.

 Recall the
definition of $N_I(y,z)$ and  $N_T(y,z)$
given in the proof of Proposition \ref{cervo1pezzo}.
We can estimate
\begin{multline}\label{piangemolto}
\bigl| \hat Q (R) (y,z) - \hat Q_\epsilon(R) (y,z) \bigr|\\
\leq \bb E_R\bigl( N_1(y,z) ; N_1(y,z) \geq \epsilon^{-1} \bigr)+
\bb E_R\bigl( N_{ [0,\epsilon]\cup [1-\epsilon, 1]}(y,z) \bigr).
\end{multline}
By stationarity (see the proof of Lemma \ref{pasquetta}) $$\bb
E_R\bigl( N_{[0,\epsilon]} (y,z) \bigr) =\bb E_R\bigl(
N_{[1-\epsilon,1]} (y,z) \bigr)= \epsilon \bb E_R\bigl( N_1
(y,z)\bigr)= \epsilon  \hat Q (R) (y,z)\,.
$$
Consider $\ell\in \bb R_+$ and apply \eqref{vedoluce} with $t=1$ and $\varphi=N_1(y,z)\wedge \ell-r(y,z)(e-1)$ (note that $\varphi \in Y_1(t)$ by Remark \ref{SD}).
We get for $R\in \mathcal M_S$ such that $H(R)\leq \alpha$
\begin{equation}
\alpha+r(y,z)(e-1)\geq H(R)+r(y,z)(e-1)\geq
\mathbb E_R\left(N_1(y,z)\wedge \ell\right)\,.
\label{elalafesta}
\end{equation}
Since by the  monotone convergence  $\lim_{\ell\to +\infty}\mathbb E_R\left(N_1(y,z)\wedge \ell\right)=\hat Q (R) (y,z)$,
taking the limit $\ell\to +\infty$ on both extreme sides of \eqref{elalafesta} we deduce
$$
\alpha+r(y,z)(e-1)\geq
\hat Q (R)(y,z)\,.
$$
From this inequality we get
that the last term in \eqref{piangemolto} converges uniformly to
zero on $\left\{R\in \mathcal M_S\,: H(R)\leq \alpha\right\}$  as $\epsilon
\downarrow 0$.
To conclude, it remains to prove that
$\lim _{\epsilon \downarrow 0}  \bb E_R\bigl( N_1(y,z) ; N_1(y,z)
\geq \epsilon^{-1} \bigr) =0$.
 To this aim,  given $\gamma,\ell >0$  we define on $D(\bb R;
 V\bigr)$ the function
$$ \varphi_{\gamma , \ell, \epsilon}:= \gamma  N_1(y,z) \id  ( \ell \geq  N_1(y,z)  \geq \epsilon^{-1}) - C(\gamma,\epsilon)$$ where
$C(\gamma, \epsilon) := \sup_{x \in V} \log \bb E_x \bigl( e^{
\gamma N_1(y,z)\, \id \bigl(  N_1(y,z) \geq \epsilon^{-1}\bigr)
}\bigr )$. Due to Remark \ref{SD}
we get
 $ C(\gamma, \epsilon)<+\infty$ and $\lim _{\epsilon \downarrow 0}
 C(\gamma, \epsilon)=0$.
 By construction  $\varphi _{\gamma , \ell, \epsilon} \in Y_1(t)$ for $t\geq 1$. Applying
\eqref{vedoluce} we get for $t \geq 1$ that
$$ \bb E_R (\varphi_{\gamma , \ell, \epsilon}) \leq \bar H (t, R) \leq t H(R)\leq  t \alpha
\,.
$$
Taking $\ell \to \infty$, we conclude that
 $\bb E_R \bigl(  N_1(y,z) \,;\,  N_1(y,z)    \geq \epsilon^{-1} \bigr)
\leq t\alpha /\gamma + C(\gamma, \epsilon)/\gamma$. Taking first the
limit $\epsilon\downarrow 0$ and afterwards the limit $\gamma
\uparrow \infty$, we conclude  that the expectation  $ \bb E_R\bigl(
N_1(y,z) ; N_1(y,z) \geq \epsilon^{-1} \bigr)$ is negligible as
$\epsilon \downarrow 0$.\qed

\subsection{Proof of \eqref{dz2} }

We  restrict to $T>1$ (the generic case could be treated by the same
arguments of the proof of Proposition  \ref{cervo1pezzo}). Recall
the definition of the projection $\pi_T$ and set
$\Theta_T(X^T)(y,z)$ given there.
$\bb P_x$--a.s. it holds
\begin{equation}\label{secondino}
\hat Q _\epsilon(\mc R_{T,X} ) (y,z)=
 \frac{1}{ T} \int _0^T \Big\{ \sum _{ \substack{ u \in [s,s+1]:\\ \pi_T(u) \in \Theta_T(X^T)(y,z)
 }} \varphi _\epsilon (u-s)\Bigr\} \wedge \epsilon^{-1} ds\,.
\end{equation}
  For each $(y,z) \in E$ and $\epsilon>0$ we define the functions
$G_\epsilon(y,z)$ and $H_\epsilon (y,z)$ on $D(\bb R;V)$ as
\begin{align*}
& G_\epsilon (X)(y,z) := \frac{1}{T} \int_0^T \bigl| \Theta_T(X^T)(y,z) \cap  \pi_T\bigl([s+\epsilon, s+1-\epsilon]\bigr) \bigr| \wedge   \epsilon^{-1} ds \\
& H_\epsilon (X)(y,z):=\frac{1}{T} \int_0^T \bigl|
\Theta_T(X^T)(y,z) \cap \pi_T\bigl( [s+\epsilon, s+1-\epsilon]\bigr)
\bigr|  ds
 \,.
\end{align*}
By the same argument used in  identity \eqref{rio3D}, it holds
\begin{equation}\label{mercatino}
H_\epsilon (X)(y,z)=(1-2\epsilon) Q_T(X^T)(y,z)= (1-2\epsilon)\hat Q
(\mc R_{T,X} )(y,z)\,.
 \end{equation}
 Trivially, it holds $\hat Q(\mc R_{T,X})(y,z)  \geq \hat
Q_\epsilon(\mc R_{T,X})(y,z)  \geq G_\epsilon (X)(y,z) $. Using
\eqref{mercatino} and the last bounds, we can estimate
\begin{equation}\label{magmion1}
\begin{split}
 &  \bb P_x \left(\hat Q(\mc R_{T,\cdot }) (y,z) - \hat Q_\epsilon
(\mc R_{T,\cdot })(y,z)\geq \delta
\right) \\
& \qquad \leq \bb P_x \left(\hat Q (\mc R_{T,\cdot })(y,z)
-G_\epsilon(y,z) \geq \delta
\right)\\
& \qquad \leq  \bb P_x \Big(\hat Q (\mc R_{T,\cdot }) (y,z)-
H_\epsilon (y,z) \geq \delta/2 \Big)+
 \bb P_x \Big(  H_\epsilon (y,z)   - G_\epsilon (y,z) \geq \delta/2
\Big)\\
&\qquad =\bb P_x \Big(2\epsilon Q_T(X^T)(y,z) \geq \delta/2 \Big) +
 \bb P_x \Big(  H_\epsilon (y,z)   - G_\epsilon (y,z) \geq \delta/2
\Big) \,.
\end{split}
\end{equation}
In order to prove the super-exponential estimate \eqref{dz2} it is enough to prove a super-exponential estimate
for both terms in the last line of \eqref{magmion1}. 

Since, by the graphical construction, under $\bb P _x$ the process $\left\{|T Q _T (X)(y,z)
|\right\}_{T\in \mathbb R_+}$ is dominated by a Poisson process
$\left\{Z_T\right\}_{T\in \mathbb R_+}$ with parameter $r(y,z) $
we have
\begin{eqnarray*}
& &\lim_{\epsilon \downarrow 0}\varlimsup_{T\to +\infty} \frac 1T \log\left[\bb P_x \Big(2\epsilon Q_T(X^T)(y,z) \geq \delta/2
\Big)\right]\\
& & \leq\lim_{\epsilon \downarrow 0}\varlimsup_{T\to +\infty} \frac 1T \log\left[\bb P \Big(2\epsilon (Z_T+1)/T\geq \delta/2\Big)\right]\\
& &\leq \lim_{\epsilon \downarrow 0}-\Phi\left(\frac{\delta}{4\epsilon},r(y,z)\right)=-\infty\,.
\end{eqnarray*}
We used a LDP for the Poisson process (the extra $1/T$ term is irrelevant) and the explicit form of the rate functional.

 It remains to bound the last term in \eqref{magmion1}. For simplicity of notation we restrict to $T$ integer (the
 general case can be treated similarly).  We define $\psi_\epsilon (r)= r \id( r > \epsilon^{-1})$.
Given $j =0,1,\dots , T-1$ and $s \in [j,j+1)$ we have
\begin{equation*}
\begin{split}  \bigl| \Theta_T(X^T)(y,z) & \cap \pi_T\bigl([s+\epsilon,
s+1-\epsilon]\bigr) \bigr| \\ & -\bigl| \Theta_T(X^T)(y,z) \cap
\pi_T\bigl([s+\epsilon, s+1-\epsilon]\bigr) \bigr| \wedge
\epsilon^{-1}       \\ &\leq \psi_\epsilon \left( \bigl|\Theta_T(X^T)(y,z)
\cap \pi_T\bigl( [j,j+2)\bigr) \bigr|\right)\,.
\end{split}
 \end{equation*}
Hence, we can estimate
 \begin{equation}\label{ulcera}
    H_\epsilon (X)(y,z)   - G_\epsilon (X)(y,z) \leq \frac{1}{T}
 \sum_{j=0}^{T-1} \psi_\epsilon \left( \bigl|\Theta_T(X^T)(y,z) \cap \pi_T\bigl(
[j,j+2)\bigr) \bigr|\right)\,.
 \end{equation}
By the graphical construction of Markov chains, under $\bb P_x$ the
set of jump times for a jump from $y$ to $z$ can be identified with
a suitable subset of an homogeneous  Poisson point process on $\bb
R_+$ with intensity $r(y,z)$. In particular, it is possible to
define a probability measure $\mc P$ on the product space $D(\bb R_+
;V) \times D(\bb R_+;\bb N)$ such that
\begin{itemize}
\item[(i)]
 the marginal of $\mc P$ on
$D(\bb R_+;V)$ equals $\bb P_x$;

\item[(ii)]   the marginal of $\mc P$ on $D(\bb R_+;\bb N)$ is the law of a    Poisson process
 with parameter $r(x,y)$,
\item[(iii)] calling $(X_t)_{t \in \bb R_+}$ and $(Z_t)_{t \in \bb
R_+} $ the generic elements of  respectively $D(\bb R_+ ;V)$ and $
D(\bb R_+;\bb N)$, it holds $\mc P$--a.s.
$$ N_{[a,b]} (X)(y,z)  \leq Z_b-Z_a \,,\qquad \forall a<b \text{ in } \bb R_+\,.
$$
\end{itemize}
Due to the above coupling and since on the interval $[0,T]$ the
paths $X$ and $X^T$ can differ at most in $T$, we can estimate $\mc
P$--a.s.
\begin{multline}\label{ruggito}
\psi_\epsilon \left( \bigl|\Theta_T(X^T)(y,z) \cap \pi_T\bigl( [j,j+2)\bigr)
\bigr|\right)
\\\leq
\begin{cases}  \psi_\epsilon (
Z_{j+2}-Z_j)  & \text{ if } 0\leq j \leq T-2\,,\\
\psi_\epsilon ( [Z_{T}-Z_{T-1}]+Z_1+1)  & \text{ if } j= T-1\,.
\end{cases}
\end{multline}
Now we introduce the nondecreasing  function $\hat \psi_\epsilon (r):= 2 r
\id ( r> \epsilon^{-1}/2)$ satisfying the inequality   $\psi_\epsilon(a+b)
\leq \hat \psi_\epsilon (a)+\hat \psi_\epsilon(b)$. Then   \eqref{ulcera} and
\eqref{ruggito} imply $\mc P$--a.s. that
$$  H_\epsilon (X)(y,z)   - G_\epsilon (X)(y,z) \leq \frac{2}{T}
  \sum_{j=0}^{T-1} \hat \psi_\epsilon (
Z_{j+1}-Z_j+1 )\,.
 $$
At this point we recall that under $\mc P$  the random variables
$\Big(Z_{j+1}-Z_j\Big) _{0\leq j \leq T-1}$ are independent Poisson
random variables with  parameter $r(y,z)$. Hence we can estimate
\begin{eqnarray}
& &\lim_{\epsilon \downarrow 0}\varlimsup_{T\to +\infty}\frac 1T\log\left[\bb P_x \Big(  H_\epsilon (y,z)    - G_\epsilon (y,z)
\geq \delta/2 \Big)\right] \nonumber \\
& &=\lim_{\epsilon \downarrow 0}\varlimsup_{T\to +\infty}\frac 1T\log\left[ \mc P  \Big(  H_\epsilon (y,z)   - G_\epsilon
(y,z) \geq \delta/2 \Big)\right]\nonumber\\
& & \leq \lim_{\epsilon \downarrow 0}\varlimsup_{T\to +\infty}\frac 1T\log\left[\mc P \Big (\frac{2}{T}
  \sum_{j=0}^{T-1} \hat \psi_\epsilon (
Z_{j+1}-Z_j+1 )\geq \delta /2\Big)\right]\nonumber\\
& &\leq \lim_{\epsilon \downarrow 0}-I_\epsilon (\delta/2)=-\infty\,.
\label{federer-tsonga}
\end{eqnarray}
In the above chain of inequalities we used Cramer Theorem for the sum of the independent
random variables $2\hat \psi_\epsilon(Z_{j+1}-Z_j+1)$ calling $I_\epsilon$ the associated
rate function. 
 The divergence in the last line follows by the following argument. Let $\Lambda_\epsilon(\lambda):=\log \mathbb E\left(e^{\lambda2\hat \psi_\epsilon(Z_{1}-Z_0)}\right)$.
By the Monotone Convergence Theorem
$\Lambda_\epsilon(\lambda) $ converges to zero for each $\lambda  \in \bb R$ as $\epsilon$ goes to zero.
Since  the rate function $I_\epsilon$ is the Legendre transform of $\Lambda_\epsilon$, we get for each fixed $\lambda \in \bb R$  that  $$I_\epsilon(\delta/2) \geq  \frac{\delta \lambda}{2}-\Lambda_\epsilon(\lambda)\,.$$ Hence, $\liminf_{  \epsilon \downarrow 0 } I_\epsilon  ( \delta /2) \geq  \delta \lambda /2$. By the arbitrariness of $\lambda $  we get the thesis.

\section{Exponential approximations: Proof of Proposition \ref{cervo3}}\label{dim_cervo3}

The measurability of $\bar Q$ can be checked by straightforward
arguments. Let us prove that $\bar Q_\epsilon$ is continuous w.r.t.
the bounded weak* topology of $L_+^1(E)$.
 As stated  in Prop. \ref{cervo2} each map $\hat Q_\epsilon(y,z) :
\mc M_S\to [0,\epsilon ^{-1}]$ is continuous and bounded.
In addition it holds $\|\bar Q_\epsilon (R)\|\leq \epsilon ^{-2}$
for all $R\in \mc M_S$. The thesis then follows from Corollary 2.7.3
in \cite{Me}.

\subsection{Proof of \eqref{dz2bis}}
Due to Proposition \ref{cervo1pezzo} the law of $\hat Q $ under
$\Gamma_{T,x}$ is the same of the law of $Q_T(X^T)$ under $\bb P_x$.
Moreover, it holds $Q_T(X^T) \in L^1 _+(E)$ $\bb P_x$--a.s. In
particular, we get that $\hat Q= \bar Q$ $\Gamma_{T,x}$--a.s. In
addition, by Proposition \ref{t:etem}, we have
\begin{equation}\label{milano}
\lim _{\ell \uparrow  +\infty}   \varlimsup_{T \uparrow +\infty}
\frac{1}{T} \log  \Gamma_{T,x} \left( \| \hat  Q \|\geq \ell \right)
=-\infty\,.
\end{equation}

Due to \eqref{milano} in order to prove \eqref{dz2bis} we only need
to show for any $\ell
>0$ that
\begin{equation}\label{roma}
\lim_{\epsilon  \downarrow 0} \varlimsup_{T \uparrow \infty}
\frac{1}{T} \log  \Gamma_{T,x} \left(  d_\mathfrak{a} ( \bar  Q  ,
\bar Q_\epsilon) > \delta\,,\, \|\hat Q\|\leq \ell  \right)
=-\infty\,.
\end{equation}
 Since $\mathfrak{a}\in \mc A$, there exits $\bar n \geq 1$ such
that $\|a_n\|_\infty \leq \delta /(2 \ell)$ for all $n \geq \bar n$.
Note that, since $\hat Q (y,z)(R) \geq \bar  Q_\epsilon (y,z)(R)$,
it holds $\|\hat Q(R)\| \geq \|\bar Q_\epsilon(R)\|$  and $\|\hat
Q(R)\| \geq \|\hat Q(R)-\bar Q_\epsilon(R)\|$ for any $R \in \mc
M_S$. Then for any $n \geq \bar n$ we have $|< \hat Q(R)-\bar
Q_\epsilon(R) , a_n>| \leq \delta /2$ if $\|\hat Q(R)\|\leq \ell$.
Therefore, in order to prove \eqref{roma} we only need to show for
any $\ell >0$ that
\begin{equation}\label{romaz}
\lim_{\epsilon  \downarrow 0} \varlimsup_{T \uparrow \infty}
\frac{1}{T} \log  \Gamma_{T,x} \left(\exists n: 1\leq n\leq \bar n
\text{ s.t. } |< \hat Q-\bar Q_\epsilon, a_n>| > \delta /2  \,,\,
\|\hat Q\|\leq \ell \right) =-\infty
\end{equation}
Since $a_n \in C_0(E)$ we can find a finite subset $E'\subset E$
such that $|a_n (e)|\leq \delta /4\ell$ for all $n:1\leq n \leq \bar
n$ and $e \in E \setminus E'$. Estimating $$ |< \hat Q-\bar
Q_\epsilon, a_n>| \leq \sum_{(y,z) \in E'} \bigl| \bigl(\hat Q(y,z)-
\bar Q_\epsilon(y,z)\bigr) a_n(y,z)\bigr|+ \|\hat Q-\bar
Q_\epsilon\| \sup _{ e \in E \setminus E'} |a_n(e)|\,,$$ we reduce
the proof of \eqref{romaz} to the proof of
\begin{equation}
\lim_{\epsilon  \downarrow 0} \varlimsup_{T \uparrow \infty}
\frac{1}{T} \log  \Gamma_{T,x} \left( | \hat Q(y,z)-\bar
Q_\epsilon(y,z) |
> \beta \right ) =-\infty\,, \qquad \forall (y,z) \in E, \; \forall \beta
>0\,.
\end{equation}
This follows from \eqref{dz2}.

\subsection{Proof of \eqref{dz1bis}}  By arguments similar to the ones used in the previous proof
the thesis follows thanks   to the bound \eqref{armadio} in Lemma
\ref{schnell}  and \eqref{dz1}.

\section{Birth and death processes}\label{s:BD}

Birth and death processes are  nearest--neighbor continuous time
Markov chains on $\bb Z_+$ with jump rates $r(k,k+1)=b_k$ and
$r(k+1,k)=d_{k+1}$, $k\ge 0$. We assume the birth rate $b_k$ and the
death rate $d_k$ to be strictly positive. We also assume
\begin{equation}\label{normale}
  Z:= \sum_{k=0}^{+\infty}
 \frac{b_0 b_1\cdots b_{k-1}}{d_1 d_2 \cdots d_{k} }<+\infty
\end{equation}
and \begin{equation}\label{urca} \sum_{k=0}^{+\infty} \frac{d_1d_2
\cdots d_k}{ b_1b_2\cdots b_k}= +\infty\,.
\end{equation}
Then  assumptions (A1)--(A4) holds. Indeed, (A1) and (A3) are
trivially satisfied.  Due to the presence of a leftmost point (the
origin), equation \eqref{invariante} reduces to the detailed balance
equation and admits  normalizable  solutions if and only if
\eqref{normale} is fulfilled. In particular, one obtains a
  unique invariant probability given by
\begin{equation}
  \label{pibd}
 \pi(0)=\frac 1Z\,,\qquad  \pi(k) = \frac 1Z \,\frac{b_0 b_1\cdots b_{k-1}}{d_1 d_2 \cdots d_{k} }
  \qquad k\geq 1\,.
\end{equation}
 Having \eqref{normale}, condition \eqref{urca} is equivalent
to non--explosion (A2) (combine Corollary 3.18 in \cite{C0} with
\eqref{urca}) and can be rewritten as $\sum_{k=1}^\infty 1/
(\pi(k)b_{k})=+\infty$. Note that condition \eqref{urca} is
equivalent to recurrence (combine \cite[Ex.\ 1.3.4]{N} with
\cite[Th. 3.4.1]{N}.
 Under the above assumptions, the logarithmic Sobolev inequality
holds if and only if (see Table 1.4 in \cite[Ch. 1]{C})
\begin{equation}\label{lollo}
\sup_{k\geq 1} \pi ([k,+\infty)) \,
\log \left( \frac{1}{\pi ([k,+\infty)) }\right)  \sum
_{j=0}^{k-1} \frac{1}{\pi(j) b_j} <+\infty\,.
\end{equation}

\smallskip

\emph{Possible absence of exponential tightness of the empirical
measure}. We first discuss a case in which the empirical measure
fails to be exponentially tight. Consider constant birth and death
rates, i.e.\ $b_k=\beta$ and $d_k=\delta$. Then \eqref{normale}  and
\eqref{urca} together are  equivalent to the condition
$\gamma:=\beta/\delta\in (0,1)$. In particular, $\pi$ is geometric
with parameter $\gamma$, i.e. $\pi(k)=(1-\gamma) \gamma^k$. Consider
an event in which in the time interval $[0,T]$ there are  $O(T)$
jumps (typical behavior) but all the jumps are to the right
(atypical behavior). The probability of such an event is ``only''
exponentially small in $T$ and therefore the empirical measure
cannot be exponentially tight. To be more precise, we write $N_T$
for the number of jumps performed in the time interval $[0,T]$.
Since the holding time at site $k$ is exponential of parameter
$\beta$ if $k=0$ and $\beta +\delta$ if $k \geq 1$, $N_T$
stochastically dominates [is stochastically dominated by] a Poisson
random variable with mean $\beta T$ [$(\beta+\delta)T$]. Hence, with
probability $1-o(1)$, $N_T$ has value in $I:=[\beta T/2,
2(\beta+\delta)T]$. By conditioning on $N_T$, it is then simple to
check that with probability at least $(1-o(1))[ \beta /
(\beta+\delta)]^{2(\beta+\delta)T-1} $ the following event $\mc A_T
$ takes place: the random variable $N_T$ has value in $I$ and all
the jumps are to the right. The event $\mc A_T$ implies  $\mu_T=
\sum_{i=0} ^{N_T} \delta_i /T$. Take now a compact set $\mc K\subset
\mc P(V)$. By Prohorov's theorem, $\mc K$ is a tight family of
probability measures and therefore, given $\epsilon
>0$, there exists a compact (finite) set $K\subset V$ such that
$\mu(K^c)\leq\epsilon$ for all $\mu \in \mc K$. Taking $T$ large
enough, under the event $\mc A_T$ the empirical measure $\mu_T$
cannot fulfills the above requirement. Hence
$$ \bb P_0( \mu_T \not \in \mc K) \geq \bb P_0 ( \mu_T(K^c)>
\epsilon)\geq \bb P_0 (\mc A_T) \geq  (1-o(1))[ \beta /
(\beta+\delta)]^{2(\beta+\delta)T-1}\,.$$ This estimate proves
that the empirical measure cannot be exponentially tight.  In
particular neither Condition~\ref{t:ccomp}  nor
\ref{t:ccompls} holds (even with $\sigma=0$).

\smallskip

\emph{Condition \ref{t:ccomp}}.
 Assume now
\begin{equation}
  \label{cdvbd}
  \lim_{k\to\infty} d_k =+\infty,
  \qquad
  \varlimsup_{k\to\infty} \: \frac{b_k}{d_k} <1.
\end{equation}
Trivially, \eqref{normale} and \eqref{urca} are satisfied. We show
that Condition~\ref{t:ccomp}  holds. As $u_n$ we
pick the constant sequence $u(k)=A^k$, $k\in\bb Z_+$ for some $A>1$
to be chosen later. Since $u_n$ does not depend on $n$, it is enough
to check Condition \ref{t:ccomp}. Items (i)--(iv) then hold
trivially; moreover setting $d_0:=0$ we get
\begin{equation*}
  v(k) = -\frac{Lu }{u} \,(k) =
d_k\Big(1-\frac{1}{A}\Big)+b_k(1-A)\,,
  \qquad k\in\bb Z_+.
\end{equation*}
Since $r(k) = b_k+d_k$, for each $\sigma\in (0,1)$ we can write
$v(k) = \sigma r(k)+d_k(1-\sigma-1/A)-b_k(A-1+\sigma)$.
 By \eqref{cdvbd},  choosing  $A$ large   items (v) and (vi) hold.
Observe that \eqref{cdvbd} is satisfied when $d_k=k$ and
$b_k=\lambda\in (0,+\infty)$. In this case $\pi$ is Poisson with
parameter $\lambda$. This implies that $e^{-\lambda}\lambda^k/k!
\leq \pi([k,+\infty)) \leq \lambda^k /k!$ (for the last bound
estimate $\pi(i)\leq e^{-\lambda} \lambda^i/(k-i)!$ for $i \geq k$).
Using these bounds, by simple computations one can check from
\eqref{lollo} that
 the logarithmic
Sobolev inequality \eqref{ls} does not hold. This shows there are
cases in which Condition~\ref{t:ccomp} holds but
Condition~\ref{t:ccompls} does not.

\smallskip

\emph{Condition \ref{t:ccompls}}. Let now focus our attention on
Condition \ref{t:ccompls}. As already mentioned, the validity of the
logarithmic Sobolev inequality is equivalent to \eqref{lollo}
(assuming \eqref{normale} and \eqref{urca}).


We next exhibit a choice in which Condition~\ref{t:ccompls} holds.
We take $b_k= (k+1)$ and $d_{k+1}= 2b_k$ for $k \geq 0$. Observe
that such rates satisfy \eqref{cdvbd}, and therefore \eqref{normale}
and \eqref{urca}. The invariant probability $\pi$ is $\pi(k)=
2^{-k-1}$. In remains to estimate  $ \sum_{j=0}^{k-1} (\pi(j) b_j)^{-1}= \sum_{j=1}^k 2^j/j$.
Supposing for simplicity $k$ even, we observe that  $\sum_{j=1}^{k/2} 2^j/j \leq (k/2) 2^{k/2}$
while $\sum_{j=k/2}^{k} 2^j/j \leq (2/k) \sum_{j=k/2}^{k} 2^j =(2/k) 2^{k/2} \sum _{j=0}^{k/2-1} 2^j \\=
(2/k) 2^{k/2}(2^{k/2}-1) $. Hence $ \sum_{j=0}^{k-1} (\pi(j) b_j)^{-1}\leq C k 2^{k/2}+ C 2^k /k$.
From these bounds it is immediate to get \eqref{lollo}. In addition,
since  $r(k)\sim k$ we deduce immediately that also item
(ii) in Condition~\ref{t:ccompls} holds, thus completing the check of
 Condition~\ref{t:ccompls}.


\emph{Violation of the  LDP in the strong topology of $L^1_+(E)$}.
By exhibiting a concrete example, we   show that - under
Condition~\ref{t:ccomp}  -
Theorem~\ref{LDP:misura+flusso} does not hold in the strong topology
of $L^1_+(E)$. We choose the birth and death rates as $b_k=(k+1)/2$
and $d_k =k$; in particular $\pi$ is geometric with parameter $1/2$.
Since \eqref{cdvbd} holds, Condition~\ref{t:ccomp} is satisfied. We shall show that the level sets of $I$ in
\eqref{rfq} are not compact in the strong topology of $L^1_+(E)$.
Set
\begin{equation*}
  \begin{split}
   \mu^n & := \big( 1-\tfrac 1n\big) \, \pi
             + \tfrac 1{2n} \big[ \delta_n +\delta_{n+1}\big]
    \\
    Q^n  & :=  \big( 1-\tfrac 1n\big) \, Q^\pi
             + \tfrac 12 \big[ \delta_{(n,n+1)} +\delta_{(n+1,n)}\big].
  \end{split}
\end{equation*}
While $\{\mu^n\}$ converges to $\pi$ in $\mc P(\bb Z_+)$, observe
that $\{Q^n\}$ converges to $Q^\pi$ in the bounded weak* topology of
$L^1_+(E)$ but it is not compact in the strong topology of
$L^1_+(E)$. Since $\div Q^n=0$, it is simple to check that
$\varlimsup_n I(\mu^n,Q^n) < +\infty$. This implies that the level
sets of $I$ are not compact in the strong topology of $L^1_+(E)$.

\appendix

\section{Proof of \eqref{vci}}\label{iobimbo}
 We call $\bar I(\mu,Q)$ the  r.h.s. of \eqref{vci}.  Trivially it
holds  $\bar I(\mu,Q)=+\infty=I(\mu,Q)$ if $\div Q\not =0$. In the
sequel we assume $\div Q=0$. Then,  equation \eqref{vci} reads
$I(\mu,Q)= \sup_{F \in C_c (E)} I_F(\mu,Q)$ where
$I_F(\mu,Q):=\langle Q,F \rangle -\langle \mu, r^F- r \rangle$. If
for some $y \in V$ and $(y,z) \in E$  it holds $\mu(y)=0$ and
$Q(y,z)>0$,  then taking $F= \lambda \delta _{(y,z)}$ with $\lambda
\to +\infty$ we obtain that $\bar I(\mu,Q)=\infty$. On the other
hand
$$I(\mu,Q) \geq \Phi( Q(y,z), Q^\mu(y,z) )= \Phi( Q(y,z), 0)
=+\infty\,.$$ As a consequence, from now on we can restrict to
$(\mu,Q)$ such that $\div Q=0$ and  $Q(y,z)=0$ for all $(y,z)\in E$
with $\mu(y)=0$.
Calling $E_+:=\{ (y,z) \in E\,:\, \mu(y)>0  \}$ we get that
$$ I_F (\mu,Q)= \sum _{(y,z) \in E_+} \left\{ Q(y,z) F(y,z)- \mu(y)
r(y,z) ( e^{F(y,z)}-1) \right\}\,.$$ At this point, it is simple to
check  that, varying $F(y,z)$,  the supremum of the above  addendum
is given by $\Phi ( Q(y,z), Q^\mu(y,z) )$ and the value of the above
addendum for $F(y,z)=0$ is zero. Hence,
$$\bar I(\mu,Q)= \sum _{(y,z)\in E_+}\Phi ( Q(y,z), Q^\mu(y,z)
)=\sum _{(y,z)\in E}\Phi ( Q(y,z), Q^\mu(y,z) )\,.$$ We now claim
that  the  above expression is $+\infty$ if $\langle \mu,
r\rangle=+\infty$, thus concluding the proof.  To this aim we
observe that for $0\leq q < p/2$ it holds $ \Phi (q,p)\geq p(1-\log
2) /2$. Indeed, the thesis is trivially true if $q=0$, while for
$q>0$ we can write $\Phi(q,p)=p f(q/p)$ where $f(x)=x \log x +1-x$.
Since $f(x) $ is decreasing for  $0<x<1$, one has $\Phi(q,p) \geq p
f (1/2)$ for  $0\leq q < p/2$.
 Hence, setting $c:= 2/(1-\log 2) $, our claim follows from the
bound
\begin{equation*}
\begin{split}\langle
\mu, r \rangle& = \sum _{(y,z) \in E} Q^\mu (y,z)  \\
& \leq
 \sum _{
\substack{
(y,z)\in E\,:\, \\
Q(y,z)< Q^\mu (y,z)/2 } }c\ \Phi( Q(y,z), Q^\mu(y,z) ) + \sum
_{\substack{ (y,z) \in E \,:\, \\ Q(y,z)\geq  Q^\mu (y,z)/2 }}2
Q(y,z) \\& \leq\sum _{(y,z)\in E}c\ \Phi ( Q(y,z), Q^\mu(y,z) )
+2\|Q\|_1\,.
\end{split}
\end{equation*}

\section{An example with discontinuous divergence} \label{div_disco}
Consider the oriented  graph $(V,E)$ where $V= \bb N \cup \{v,w\} $
and $E$ is given by the oriented bonds of the form $(v,n)$,
$(n,w)$,$(w,v)$ for some $n \in \bb N$. For each $n \in \bb N$ we
define $Q^{(n)}$ as the flow of unitary flux associated to the cycle
$(v,n,w,v)$, i.e.\ $Q^{(n)}= \id_{(v,n)} + \id _{(n,w)}
+\id_{(w,v)}$. We claim that $Q^{(n)}$ converges to $Q:= \id
_{(w,v)}$ in  $L^1_+(E)$ (endowed of the bounded weak* topology).
Since $\|Q^{(n)}\|=3$, the sequence $\big(Q^{(n)}\big)_{n \in \bb
N}$ is bounded in the strong topology of $L^1_+(E)$. In particular,
$Q^{(n)} \to Q$ in the bounded weak* topology if and only if
$Q^{(n)} \to Q$ in the  weak* topology, and therefore  if and only
if $\langle \phi, Q^{(n)}\rangle \to \langle \phi, Q\rangle $ for
each $\phi \in C_0(E)$. By construction we have
$$\langle \phi, Q^{(n)}\rangle = \phi(v,n)+\phi(n,w)+ \phi (w,v) \to
\phi(w,v) = \langle \phi, Q\rangle \,,
$$
thus concluding the proof of our claim.

We observe that, despite $\div Q^{(n)}=0$ for all $n \in \bb N $, it
holds $\div Q \not =0$. This example shows that the map $L^1_+(E)
\ni Q \to \div Q(x)\in \bb R$, with $x \in V$, is not in general a
continuous map.

\subsection*{Acknowledgements}

We thank the   referees for the careful reading and the useful suggestions.

\end{document}